\newcommand{\bc}{\begin{center}}
\newcommand{\ec}{\end{center}}
\newcommand{\ba}{\begin{array}}
\newcommand{\ea}{\end{array}}
\newcommand{\be}{\begin{eqnarray}}
\newcommand{\ee}{\end{eqnarray}}
\newcommand{\bel}{\begin{eqnarray}\label}
\newcommand{\eel}{\end{eqnarray}}
\newcommand{\bes}{\begin{eqnarray*}}
\newcommand{\ees}{\end{eqnarray*}}
\newcommand{\bn}{\begin{enumerate}}
\newcommand{\en}{\end{enumerate}}
\newcommand{\mse}{\mathsf{MSE}}
\definecolor{MIT}{cmyk}{.24, 1.00, .78, .17} 
\definecolor{pink}{cmyk}{0, 1, 0, 0} 
\definecolor{darkgreen}{cmyk}{1,0, 1, 0}
\newtheorem{theorem}{Theorem}
\newtheorem*{theorem*}{Theorem}
\newtheorem{lemma}[theorem]{Lemma}
\newtheorem{definition}[theorem]{Definition}
\newtheorem{proposition}[theorem]{Proposition}
\newtheorem*{proposition*}{Proposition}
\newtheorem{corollary}[theorem]{Corollary}
\newcommand{\diag}{\mathop{\mathsf{diag}}}
\newcommand{\MSE}{\mathop{\mathsf{MSE}}}
\newcommand{\leadeq}[2][4]{\MoveEqLeft[#1] #2 \nonumber}
\newcommand{\diff}{\mathop{}\!\mathrm{d}}
\newcommand\undermat[2]{%
  \makebox[-5pt][l]{$\smash{\underbrace{\phantom{%
    \begin{matrix}#2\end{matrix}}}_{\text{$#1$}}}$}#2}
\newcommand{\mockalph}[1]{}
\newcommand{\subalign}[1]{%
  \vcenter{%
    \Let@ \restore@math@cr \default@tag
    \baselineskip\fontdimen10 \scriptfont\tw@
    \advance\baselineskip\fontdimen12 \scriptfont\tw@
    \lineskip\thr@@\fontdimen8 \scriptfont\thr@@
    \lineskiplimit\lineskip
    \ialign{\hfil$\m@th\scriptstyle##$&$\m@th\scriptstyle{}##$\crcr
      #1\crcr
    }%
  }
}
\newcommand{\cB}{\mathcal{B}}
\newcommand{\cF}{\mathcal{F}}
\newcommand{\cN}{\mathcal{N}}
\newcommand{\bone}{\mathbf{1}}
\newcommand{\R}{{\rm I}\kern-0.18em{\rm R}}
\newcommand{\h}{{\rm I}\kern-0.18em{\rm H}}
\newcommand{\K}{{\rm I}\kern-0.18em{\rm K}}
\newcommand{\p}{{\rm I}\kern-0.18em{\rm P}}
\newcommand{\E}{{\rm I}\kern-0.18em{\rm E}}
\newcommand{\Z}{{\rm Z}\kern-0.18em{\rm Z}}
\newcommand{\1}{{\rm 1}\kern-0.24em{\rm I}}
\newcommand{\N}{{\rm I}\kern-0.18em{\rm N}}
\newcommand{\pn}{\p_{\kern-0.25em n}}
\newcommand{\pnm}{\p_{\kern-0.25em n,m}}
\newcommand{\psubm}{\p_{\kern-0.25em m}}
\newcommand{\psubp}{\p_{\kern-0.25em p}}
\newcommand{\cfi}{\cF_{\kern-0.25em \infty}}
\newcommand{\ic}{\mathrm{i}}
\newcommand{\argmin}{\mathop{\mathrm{argmin}}}
\newcommand{\sign}{\mathop{\mathrm{sign}}}
\newcommand{\eps}{\varepsilon}
\newlength{\minipagewidth}
\begin{document}


\begin{frontmatter}

\title{Optimal rates for total variation denoising}
\runtitle{TV denoising}

\author{ \fnms{Jan-Christian} \snm{Hütter}\thanksref{}\ead[label=huetter]{huetter@math.mit.edu} \and
 \fnms{Philippe} \snm{Rigollet}\thanksref{t2}\ead[label=rigollet]{rigollet@math.mit.edu}
}
\affiliation{Massachusetts Institute of Technology}

\thankstext{t2}{Supported in part by NSF grants DMS-1317308 and CAREER-DMS-1053987.}

\address{{Jan-Christian Hütter}\\
{Department of Mathematics} \\
{Massachusetts Institute of Technology}\\
{77 Massachusetts Avenue,}\\
{Cambridge, MA 02139-4307, USA}\\
\printead{huetter}
}

\address{{Philippe Rigollet}\\
{Department of Mathematics} \\
{Massachusetts Institute of Technology}\\
{77 Massachusetts Avenue,}\\
{Cambridge, MA 02139-4307, USA}\\
\printead{rigollet}
}

\runauthor{H\"utter, Rigollet}

\begin{abstract}
Motivated by its practical success, we show that the 2D total variation denoiser satisfies a sharp oracle inequality that leads to near optimal rates of estimation for a large class of image models such as bi-isotonic, H\"older smooth and cartoons.
Our analysis hinges on properties of the unnormalized Laplacian of the  two-dimensional grid such as eigenvector delocalization and spectral decay. We also present extensions to more than two dimensions as well as several other graphs.
\end{abstract}

\begin{keyword}[class=AMS]
\kwd[Primary ]{62G08}
\kwd[; secondary ]{62C20, 62G05, 62H35}
\end{keyword}
\begin{keyword}[class=KWD]
Total variation regularization, TV denoising, sharp oracle inequalities, image denoising, edge Lasso, trend filtering, nonparametric regression, shape constrained regression, minimax
\end{keyword}

\end{frontmatter}

\section{Introduction} 
\label{sec:introduction}

Total variation image denoising has known a spectacular practical success since its introduction by \cite{RudOshFat92} more than two decades ago. Surprisingly, little is known about its statistical performance. In this paper, we close this gap between theory and practice by providing a novel analysis for this estimator in a Gaussian white noise model. In this model, we observe a vector $y \in \R^n$ defined as
\begin{equation}
\label{eq:al}
y = \theta^\ast + \varepsilon\,,
\end{equation}
where $\theta^* \in \R^n$ is the unknown parameter of interest and $\eps \sim \cN(0, \sigma^2I_n)$ is a Gaussian random vector. In practice, $\theta^*$ corresponds to a vectorization of an image and we observe it corrupted by the noise $\eps$. The goal of image denoising is to estimate $\theta^*$ as accurately as possible. In this paper, we follow the standard employed in the image denoising literature and  measure the performance of an estimator $\hat \theta$ by its \emph{mean squared error}. It is defined by 
$$
\mse(\hat \theta):=\frac1n\|\hat \theta -\theta^*\|_2^2.
$$

Note that a lot of the work concerning the fused Lasso in the context of graphs has been focused on sparsistency results, \ie, conditions under which we can expect to recover the set of edges along which the signal has a jump \cite{HarLev12,QiaJia12,ShaSinRin12,OllVia15,ViaLamHoe16}, which is a different objective than controlling the MSE.

The total variation (TV) denoiser $\hat \theta$ is defined as follows. Let $G = (V, E)$ be an undirected connected graph with vertex set $V$ and edge set $E$ such that \(|V|=n, |E| = m \). The graph $G$ traditionally employed in image denoising is the two-dimensional (2D) grid graph defined as follows. The vertex set is $V=[N]^2$ and the edge set $E \subset [N]^2 \times [N]^2$ contains edge $e=\big([i,j], [k,l]\big)$ if and only if $[k,l]-[i,j] \in \{[1,0], [0,1]\}$. Nevertheless, our results remain valid for other graphs as discussed in Section~\ref{sec:other-graphs} and we work with a general graph $G$ unless otherwise mentioned.

Throughout this paper it will be convenient to represent a graph $G$ by its edge-vertex incidence matrix $D=D(G) \in \{-1,0,1\}^{m \times n}$. Without loss of generality, identify $V$ to $[n]$ and $E$ to $[m]$ whenever convenient.   To each edge $e=(i,j) \in E$ corresponds a row $D_{e,:}$ of $D$ with entries given as follows. The $k$th entry $D_{e,k}$ of $D_{e,:}$ is given by
$$
D_{e,k}=\left\{
\begin{array}{rl}
1 &\text{if}\ k=\min(i, j)\\
-1 &\text{if}\ k=\max(i,j)\\
0 &\text{otherwise.}
\end{array}
\right.
$$
Note that the matrix $L=D^\top D$ is the \emph{unnormalized Laplacian} of the graph $G$~\cite{Chu97}. It can be represented as $L=\diag(A\1_n)-A$, where $A$ is the adjacency matrix of $G$ and $\diag(A\bone_n)$ is the diagonal matrix with $j$th diagonal element given by the degree of vertex $j$.

The TV denoiser $\hat \theta$ associated to $G$ is then given by any solution to the following minimization problem
\begin{equation}
	\label{eq:y}
	\hat\theta  \in \argmin_{\theta \in \R^n} \frac{1}{n} \| \theta - y \|_2^2 + \lambda \| D \theta \|_1\,,
\end{equation}
where $\lambda>0$ is a regularization parameter to be chosen carefully. Our results below give a precise choice for this parameter.
Note that \eqref{eq:y} is a convex problem that may be solved efficiently (see \cite{ArnTib16} and references therein).

Akin to the sparse case, the TV penalty in~\eqref{eq:y} is a convex relaxation for the number of times $\theta$ changes values along the edges of $G$. Intuitively, this is a good idea if $\theta^*$ takes small number of values for example. In this paper, we favor an analysis where $\theta^*$ is not of such form but may be well approximated by a piecewise constant vector. Our main result, Theorem~\ref{thm:Lasso-rates}, is a sharp oracle inequality that trades off approximation error against estimation error. In Section~\ref{sec:applications}, we present several examples where approximation error can be explicitly controlled: H\"older functions, Isotonic matrices and cartoon images. In each case, our results are near optimal in a minimax sense.

Our analysis partially leverages insight gained from recent results for the one-dimensional case where $G$ is the path graph by \cite{DalHebLed14}. In  this case, the TV denoiser is often referred to as a fused (or fusion) Lasso~\cite{TibSauRos05, Rin09}. Moreover, the TV denoiser $\hat \theta$ defined in~\eqref{eq:y} is often called to \emph{generalized fused Lasso}. The analysis provided in~\cite{DalHebLed14} is specific to the path graph and does not extend to more general graphs.  We extend these results to other graphs, with particular emphasis on the 2D grid. Critically, our analysis can be extended to graphs with specific spectral properties, such as random graphs with bounded degree. It is worth mentioning that our techniques, unfortunately do not recover the results of \cite{DalHebLed14} for the path graph.

\subsection{Notation} 
\label{sub:notation}

For two integers \( n, m \in \N \), we write \( [n] = \{1, \dots, n \} \),  \( \llbracket n, m \llbracket = \{n, n+1, \dots, m-1 \} \) and \( \llbracket n, m \rrbracket = \{n, n+1, \dots, m \} \). 
Moreover, for two real numbers $a,b$, we write $a\vee b=\max(a,b)$ and $a\wedge b=\min(a,b)$.

We  reserve bold-face letters like \( \bm{i}, \bm{j}, \bm{k} \) for multi-indices whose elements are written in regular font, \eg \( \bm{i} = (i_1, \dots, i_d) \).

We denote by $\bone_d$ the all-ones vector of $\R^d$. 

We write $\1(\cdot)$ for the indicator function.

For any two sets $A,B \subset \R^d$ we define their Minkowski sum as $A+B=\{a+b\,, a \in A, b \in B\}$. Moreover, for any $\eta \ge 0$, we denote by $\cB(\eta)=\{x \in \R^d\,, \|x\|\le \eta\}$ the Euclidean ball of radius $\eta$.

For any vector $x \in \R^d$, $T \subset [d]$, we define  $x_T \in \R^d$ to be the vector with $j$ coordinate given by $(x_T)_j=x_j \1(j \in T)$.

We denote by $A^\dagger$ the Moore-Penrose pseudo-inverse of a matrix $A$ and by \( \otimes \) the Kronecker product between matrices, \( (A \otimes B)_{p(r-1)+v,q(s-1)+w} = A_{r,s}B_{v,w} \).

The notation \( \lesssim \) means that the left-hand side is bounded by the right-hand side up to numerical constant that might change from line to line.
Similarly, the constants \( C, c \) are generic as well and are allowed to change.

\subsection{Previous work}
Despite an overwhelming practical success, theoretical results for the TV denoiser on the 2D grid have been very limited. \cite{mammen_locally_1997} obtained the first suboptimal statistical rates and more recent advances were made in~\cite{NeeWar13b} and~\cite{WanShaSmo15}.

First and foremost, both \cite{mammen_locally_1997} and \cite{WanShaSmo15} study the more general framework of \emph{trend filtering} where instead of applying the difference operator $D$ in the penalty, one may apply $D^{k+1}$ (with appropriate corrections due to the shrinking dimension of the image space). In this paper, we focus on the case where $k=0$. 

Second, while our paper focuses on fast rates (of the order $1/n$),~\cite{WanShaSmo15} also studies graphs that lead to slower rates. A prime example is the path graph that is omitted from the present work and for which~\cite{WanShaSmo15} recover the optimal rate $n^{-2/3}$ for signals $\theta^\ast$ such that $\|D\theta^\ast\|_1\le C$. This rate was previously known to be optimal~\cite{DonJoh95} for such signals, using comparison with Besov spaces. Remarkably, if $\theta^\ast$ is piecewise constant with large enough pieces,~\cite{DalHebLed14} proved that this rate can be improved to a rate of order $1/n$ using a rather delicate argument. Moreover, their result is also valid in a oracle sense, allowing for model misspecification and leading to adaptive estimation of smooth functions on the real line. Part of our results extend this application to higher dimensions.
%

Our paper improves upon the work of~\cite{mammen_locally_1997} and~\cite{WanShaSmo15} in three directions. First,  our analysis leads to an optimal fast rate of order $\|D\theta^*\|_1/n$ for the 2D grid, unlike the rates \( (\| D \theta^\ast \|_1/n)^{3/5} \) and $(\|D\theta^*\|_1/n)^{4/5}$ that were obtained by~\cite{mammen_locally_1997} and~\cite{WanShaSmo15}, respectively. Our results are achieved by a careful analysis of the pseudo inverse $D^\dagger$ of $D$. In particular, our argument bypasses truncation of the spectrum altogether. Second, we also derive a ``scale free" result where the jumps in a piecewise constant signal $\theta^*$ may be of arbitrary size as instantiated by bounds of the order of $\|D\theta^*\|_0/n$. Finally, in the spirit of~\cite{DalHebLed14}, our results are expressed in terms of oracle inequalities. It gives us the ability to handle approximation error and ultimately prove adaptive and near optimal rates in several nonparametric regression models. These applications are detailed in section~\ref{sec:applications}. The scale-free results are key in obtaining optimal rates for cartoon images in subsection~\ref{sub:cartoon_functons}. Both the oracle part and the scale-free results are entirely novel compared to~\cite{mammen_locally_1997} and~\cite{WanShaSmo15}.

%


Another step towards understanding the behavior of the TV denoiser was made in \cite{NeeWar13a,NeeWar13b}, where the authors focus on the case where the noise $\eps$ has small $\ell_2$ norm but is otherwise arbitrary as opposed to Gaussian in the present paper. This framework is fairly common in the literature on noisy compressed sensing. These results often do not translate directly the Gaussian noise setting since  properties of $\eps$ other than its $\ell_2$ norm are employed.
Nevertheless, one of their key lemmas, \cite[Proposition 7]{NeeWar13a}, provides additional insight into the relationship between Haar wavelet thresholding and total variation regularization.
In particular, it allows to prove that thresholding in the Haar wavelet basis attains rates comparable to the one we obtain for TV denoising, and it also can be used to prove the rates for TV denoising itself, albeit with an additional log factor.
We include these results in Appendix \ref{sec:haar-thresholding}.

\section{Sharp oracle inequality} 
\label{sec:sharp}

We start by defining two quantities involved in estimating the performance of the Lasso.

\begin{definition}[Compatibility factor, inverse scaling factor]
Let \( D \) be an incidence matrix, \( D \in \{-1,0,1\}^{m \times n} \), and write $S:=D^\dag =[s_1, \dots, s_m]$. The \emph{compatibility factor} of $D$ for a set \( T \subseteq [m] \) is defined as
	\label{def:comp-fac-rec-coeff}
	\begin{equation}
		\label{eq:v}
		\kappa_\emptyset := 1, \quad 
		\kappa_T = \kappa_T(D) := \inf_{\substack{\theta \in \R^n}} \frac{\sqrt{|T|} \| \theta \|_2}{\| (D \theta)_T \|_1} \quad \text{ for } T \neq \emptyset\,.
	\end{equation}
If we omit the subscript, then we mean the worst possible value of the constant, \ie \( \kappa =  \inf_{T \subseteq [m]} \kappa_T \).

Moreover, the  \emph{inverse scaling factor} of $D$ is defined as
	\begin{equation}
	\label{eq:cj}
		\rho = \rho(D) := \max_{j \in [m]} \| s_j \|_2\,.
	\end{equation}
\end{definition}

We prove the following main result.

\begin{theorem}[Sharp oracle inequality for TV denoising]
	\label{thm:Lasso-rates}
Fix  \( \delta \in (0,1) \), $T \subset [m]$ and let \( D \) being the incidence matrix of a connected graph \( G \). Define the regularization parameter
	\begin{equation}
		\label{eq:x}
		\lambda := \frac{1}{n} \sigma \rho \sqrt{2 \log\big(\frac{em}{\delta}\big)},
	\end{equation}
With this choice of $\lambda$, the TV denoiser $\hat \theta$ defined in \eqref{eq:y}  satisfies
	\begin{align}
		 \frac{1}{n} \| \hat\theta - \theta^\ast \|^2
		\leq {} & \inf_{\bar{\theta} \in \R^n} \left\{ \frac{1}{n} \| \bar{\theta} - \theta^\ast \|^2 + 4 \lambda \| (D \bar{\theta})_{T^c} \|_1 \right\}\\
	 	&	+ \frac{8 \sigma^2}{n} \left( \frac{|T| \rho^2}{\kappa_T^{2}} \log \big(\frac{em}{\delta}\big) + \log\big(\frac{e}{\delta}\big) \right).
		\label{eq:u}
	\end{align}
	on the estimation error with probability at least \( 1 - 2 \delta \). 
\end{theorem}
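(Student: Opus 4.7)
The plan is to follow the standard blueprint for sharp oracle inequalities of penalized least-squares estimators, specialized to the TV penalty by decomposing vectors through the Moore--Penrose pseudo-inverse $D^\dagger$. Since $\theta \mapsto \tfrac{1}{n}\|\theta-y\|^2$ is $(2/n)$-strongly convex and $\hat\theta$ minimizes the strongly convex problem \eqref{eq:y}, the strong-convexity refinement of optimality gives, for every $\bar\theta \in \R^n$,
\[
  \tfrac{1}{n}\|\hat\theta-y\|^2 + \lambda\|D\hat\theta\|_1 + \tfrac{1}{n}\|\hat\theta-\bar\theta\|^2 \leq \tfrac{1}{n}\|\bar\theta-y\|^2 + \lambda\|D\bar\theta\|_1.
\]
Substituting $y = \theta^* + \varepsilon$ and writing $w := \hat\theta - \bar\theta$, this reduces to
\[
  \tfrac{1}{n}\|\hat\theta-\theta^*\|^2 + \tfrac{1}{n}\|w\|^2 \leq \tfrac{1}{n}\|\bar\theta-\theta^*\|^2 + \tfrac{2}{n}\varepsilon^\top w + \lambda\bigl(\|D\bar\theta\|_1 - \|D\hat\theta\|_1\bigr).
\]
The extra $\|w\|^2/n$ on the left is what will preserve a leading coefficient of one in front of $\|\bar\theta-\theta^*\|^2/n$ in the final oracle inequality.

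The core step is to control $\tfrac{2}{n}\varepsilon^\top w$. Since $G$ is connected, $\ker D = \mathrm{span}(\bone_n)$, and I decompose $w = D^\dagger(Dw) + (\bone_n^\top w/n)\bone_n$ to obtain
\[
  \tfrac{2}{n}\varepsilon^\top w = \tfrac{2}{n}\sum_{j=1}^m (s_j^\top\varepsilon)(Dw)_j + \tfrac{2}{n^2}(\bone_n^\top\varepsilon)(\bone_n^\top w).
\]
Since $s_j^\top\varepsilon \sim \cN(0,\sigma^2\|s_j\|_2^2)$ with $\|s_j\|_2 \leq \rho$, a union-bound Gaussian-maximum inequality combined with the choice \eqref{eq:x} yields $\max_j|s_j^\top\varepsilon| \leq n\lambda$ with probability at least $1-\delta$, so the first summand is at most $2\lambda\|Dw\|_1$. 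For the mean term, the KKT conditions for \eqref{eq:y} force $\bone_n^\top\hat\theta = \bone_n^\top y$ (because $D\bone_n = 0$); moreover, the shift $\bar\theta \mapsto \bar\theta + c\bone_n$ leaves $\|(D\bar\theta)_{T^c}\|_1$ invariant while only decreasing $\|\bar\theta - \theta^*\|_2^2$ at $c = \bone_n^\top(\theta^*-\bar\theta)/n$, so inside the infimum I may restrict to $\bar\theta$ with $\bone_n^\top\bar\theta = \bone_n^\top\theta^*$ at no cost. Under this restriction $\bone_n^\top w = \bone_n^\top\varepsilon$, and the mean term equals $2(\bone_n^\top\varepsilon)^2/n^2$, bounded by $4\sigma^2\log(e/\delta)/n$ with probability at least $1-\delta$ via a $\chi^2_1$ tail. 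Union-bounding, with probability at least $1-2\delta$,
\[
  \tfrac{2}{n}\varepsilon^\top w \leq 2\lambda\|Dw\|_1 + \tfrac{4\sigma^2\log(e/\delta)}{n}.
\]

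I then split the penalty difference by triangle and reverse-triangle inequalities on the partition $T \cup T^c$:
\[
  \|D\bar\theta\|_1 - \|D\hat\theta\|_1 \leq \|(Dw)_T\|_1 + 2\|(D\bar\theta)_{T^c}\|_1 - \|(Dw)_{T^c}\|_1.
\]
Substituting this and the noise bound into the strong-convexity inequality gives, on the good event,
\[
  \tfrac{1}{n}\|\hat\theta-\theta^*\|^2 + \tfrac{1}{n}\|w\|^2 \leq \tfrac{1}{n}\|\bar\theta-\theta^*\|^2 + 3\lambda\|(Dw)_T\|_1 + \lambda\|(Dw)_{T^c}\|_1 + 2\lambda\|(D\bar\theta)_{T^c}\|_1 + \tfrac{4\sigma^2\log(e/\delta)}{n}.
\]
The residual $\lambda\|(Dw)_{T^c}\|_1$ is handled by a cone-condition case split: if $\|(Dw)_{T^c}\|_1 \leq 2\|(D\bar\theta)_{T^c}\|_1$ it is dominated by $2\lambda\|(D\bar\theta)_{T^c}\|_1$, raising the coefficient to $4\lambda\|(D\bar\theta)_{T^c}\|_1$ as in \eqref{eq:u}; in the opposite regime it is in turn controlled via the strong-convexity slack $\|w\|^2/n$ on the left. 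Finally, the compatibility factor \eqref{eq:v} yields $\|(Dw)_T\|_1 \leq (\sqrt{|T|}/\kappa_T)\|w\|_2$, and Young's inequality $ab \leq a^2/(4c) + cb^2$ with $c = 1/(2n)$ converts $3\lambda\|(Dw)_T\|_1$ into a term of order $n\lambda^2|T|/\kappa_T^2$ plus $\|w\|^2/(2n)$, the latter being absorbed by the remaining strong-convexity slack. Rewriting $\sigma^2\rho^2\log(em/\delta) = n^2\lambda^2/2$ via \eqref{eq:x} puts the constants in the stated form.

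The main obstacle is the noise decomposition in the second step: without peeling off the null-space direction $\bone_n$, a naive Gaussian maximum over $n$ coordinates would introduce a spurious $\sqrt{n}$ factor and destroy the desired scaling in $\rho$ and $|T|/\kappa_T^2$. The KKT identity $\bone_n^\top\hat\theta = \bone_n^\top y$ together with the WLOG reduction to mean-matched oracles are the essential tools that turn the mean-direction noise into a bounded scalar $\chi^2_1$ contribution, while the transverse component is controlled through a maximum over exactly $m$ sub-Gaussian variables with variances dictated by $\rho^2$. Carrying the strong-convexity slack $\|w\|^2/n$ all the way through the Young step is the other delicate point, as that is what keeps the leading constant in front of $\|\bar\theta-\theta^*\|^2/n$ equal to one.
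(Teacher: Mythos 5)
Your overall architecture coincides with the paper's: a basic inequality from optimality of $\hat\theta$, decomposition of the noise along $\ker(D)=\mathrm{span}(\bone_n)$ and its orthogonal complement via $D^\dagger$, a Gaussian maximal inequality over the $m$ columns of $D^\dagger$ calibrated by $\rho$, the $T/T^c$ splitting by triangle inequalities, the compatibility factor, and Young's inequality. Your treatment of the null-space direction is a genuine (and clean) variant: the paper bounds $(\Pi\varepsilon)^\top w\le\|\Pi\varepsilon\|\,\|w\|$ and absorbs $\|w\|$ by Young, whereas you use $\bone_n^\top\hat\theta=\bone_n^\top y$ together with the mean-matching reduction on $\bar\theta$ to turn this piece into an explicit $\chi^2_1$ contribution; both are valid, and your reduction of the infimum to mean-matched oracles is sound.

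The gap is in the final step. After your (correct) bookkeeping, the display contains the term $+\lambda\|(Dw)_{T^c}\|_1$ with a positive sign, and your case split does not dispose of it. In the regime $\|(Dw)_{T^c}\|_1>2\|(D\bar\theta)_{T^c}\|_1$ you assert that this term is ``controlled via the strong-convexity slack $\|w\|^2/n$,'' but no inequality relates $\|(Dw)_{T^c}\|_1$ to $\|w\|_2$ without passing through a compatibility constant for $T^c$: the best generic bound (Lemma \ref{lem:a}) is $\|(Dw)_{T^c}\|_1\le 2\sqrt{d\,|T^c|}\,\|w\|_2$, and after Young's inequality this produces a term of order $n\lambda^2 d\,|T^c|$, which scales with $|T^c|$ (up to $m$) rather than with $|T|$ and destroys the claimed rate. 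Nor can you invoke a cone condition $\|(Dw)_{T^c}\|_1\lesssim\|(Dw)_T\|_1$ in that regime: deriving one requires the $T^c$ term to enter the basic inequality with a \emph{negative} coefficient, which is precisely what fails here. The source of the leftover is that your strong-convexity inequality carries the penalty difference with coefficient $\lambda$ while the noise bound contributes $2\lambda\|Dw\|_1$; in the paper's argument the optimality-condition display \eqref{eq:ar} carries the penalty difference with coefficient $2\lambda$, so in \eqref{eq:r} the $\|(Dw)_{T^c}\|_1$ contributions cancel identically and no residual term ever arises. The standard repair on your side is to take $\lambda$ at least $\tfrac{2}{n}\sigma\rho\sqrt{2\log(em/\delta)}$, i.e.\ twice the noise level, so that $\tfrac{2}{n}\varepsilon^\top D^\dagger Dw\le\lambda\|Dw\|_1$ and the $T^c$ contributions cancel against the penalty; as written, your argument does not establish the theorem with the $\lambda$ of \eqref{eq:x}.
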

We delay the proof to the Appendix, Subsection~\ref{proof:main}.

The sharp oracle inequality \eqref{eq:u} allows trading off \( |T| \) with \( \| (D \bar{\theta})_{T^c} \|_1 \).
For \( T = \operatorname{supp}(D \bar{\theta}) \), we recover the $\ell_0$ rate \( \sigma^2 \kappa_T^{-2} \rho^2 \log(m/\delta) |T|/n \), while setting \( T \) to be the empty set, \( T = \emptyset \), yields the $\ell_1$ rate \( \sigma \rho \sqrt{\log (m/\delta)} \| D \bar{\theta} \|_1/n \).
We will see in Section \ref{sec:applications} that both rates are essential to get minimax rates for certain complexity classes

In order to evaluate the performance of the TV denoiser $\hat \theta$ on any graph $G$ and in particular on the 2D grid, we need   estimates on \( \rho \) and \( \kappa \). 

It turns out that bounding the compatibility factor is rather easy for all bounded degree graphs.

\begin{lemma}
	\label{lem:a}
	Let \( D \) be the incidence matrix of a graph \( G \) with maximal degree \( d \) and \( \emptyset \neq T \subseteq E \).
	Then,
	\begin{equation}
		\label{eq:h}
		\kappa_T = \inf_{\theta \in \R^n} \frac{\sqrt{|T|} \| \theta \|}{\| (D \theta)_T \|_1} \geq \frac{1}{2 \min\{\sqrt{d}, \sqrt{|T|}\}}.
	\end{equation}
\end{lemma}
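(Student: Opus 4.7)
The claim bounds the compatibility factor $\kappa_T$ from below, which amounts to providing an upper bound on $\|(D\theta)_T\|_1$ in terms of $\|\theta\|_2$. The plan is to produce two such bounds and combine them, since the stated inequality features a minimum of $\sqrt{d}$ and $\sqrt{|T|}$.

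First I would handle the $\sqrt{d}$ piece via the Laplacian. Recall $L = D^\top D$ has operator norm at most $2d$ for a graph of maximal degree $d$ (this follows from the representation $L = \diag(A\bone_n) - A$ combined with $\|A\|_{\mathrm{op}} \leq d$, or directly from the Gershgorin disk theorem applied to $L$). Therefore
\begin{equation*}
\|D\theta\|_2^2 = \theta^\top L\theta \leq 2d\,\|\theta\|_2^2.
\end{equation*}
A single application of Cauchy--Schwarz then gives
\begin{equation*}
\|(D\theta)_T\|_1 \leq \sqrt{|T|}\,\|(D\theta)_T\|_2 \leq \sqrt{|T|}\,\|D\theta\|_2 \leq \sqrt{2d\,|T|}\,\|\theta\|_2,
\end{equation*}
which yields $\kappa_T \geq 1/\sqrt{2d} \geq 1/(2\sqrt{d})$.

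Next I would handle the $\sqrt{|T|}$ piece combinatorially, using only that $T$ involves at most $2|T|$ incidences. Writing each edge $e \in T$ as $(i,j)$ and applying the triangle inequality,
\begin{equation*}
\|(D\theta)_T\|_1 = \sum_{e=(i,j)\in T} |\theta_i - \theta_j| \leq \sum_{e=(i,j)\in T}\bigl(|\theta_i|+|\theta_j|\bigr) \leq 2|T|\,\|\theta\|_\infty \leq 2|T|\,\|\theta\|_2,
\end{equation*}
so that $\kappa_T \geq \sqrt{|T|}/(2|T|) = 1/(2\sqrt{|T|})$.

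Combining the two lower bounds via $\kappa_T \geq \max\{1/(2\sqrt{d}),\, 1/(2\sqrt{|T|})\} = 1/(2\min\{\sqrt{d},\sqrt{|T|}\})$ finishes the argument. There is no real obstacle here: the first bound exploits the spectral gap of the Laplacian while the second exploits the sparse support structure of $T$, and neither relies on any finer property of $G$ beyond the degree bound.
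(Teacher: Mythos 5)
Your proof is correct. It differs from the paper's in its organization: the paper runs a single chain, first applying Cauchy--Schwarz to get $\| (D\theta)_T \|_1 \le \sqrt{|T|}\, \| (D\theta)_T \|_2$ and then bounding $\| (D\theta)_T \|_2^2 = \sum_{(i,j)\in T} |\theta_i - \theta_j|^2 \le 2\sum_{(i,j)\in T}(\theta_i^2+\theta_j^2)$ by counting that each vertex is incident to at most $\min\{d,|T|\}$ edges of $T$, which produces both regimes of the minimum at once. You instead prove the two regimes separately --- the $\sqrt{d}$ piece via the spectral bound $\theta^\top L \theta \le 2d\|\theta\|_2^2$ (equivalent to the paper's incidence count when $d\le |T|$, since dropping the restriction to $T$ costs nothing there), and the $\sqrt{|T|}$ piece via the triangle inequality and $\|\theta\|_\infty \le \|\theta\|_2$ --- and then take the maximum of the two lower bounds. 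Both arguments are elementary, use nothing beyond the degree bound, and yield the same constant $2$; the paper's version is slightly more compact, while yours makes the two mechanisms behind the minimum (spectral versus support-size) explicit. No gap.
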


\begin{proof}
	Let \( D \) be the incidence matrix of a graph \( G = (V,E) \), \( \theta \in \R^n \), and let $T \subset E =[m]$.
Moreover, denote by  \( d_i = \# \{ j \in [n] : (i, j) \in E \} \) the degree of vertex \( i \) and by \( d = \max_{i \in [n]} d_i \) the maximum degree of the graph.

	Then, by triangle inequality,
	\begin{equation*}
		\label{eq:d}
		\| (D \theta)_T \|_1
		\le \sqrt{|T|} \sqrt{\sum_{(i,j) \in T} |\theta_i- \theta_j|^2 } 
		\leq 2 \sqrt{|T|} \min \{\sqrt{|T|}, \sqrt{d}\} \| \theta \|_2
	\end{equation*}
\end{proof}

\section{Total variation regularization on the grid} 
\label{sec:tv-regularization-grid}

\subsection{TV regularization in 2D}
\label{sub:tv-2d}

In this section, we show that  \( \rho \lesssim \sqrt{\log n} \). Note that this is different from the 1D case: if we consider the incidence matrix \( \widetilde{D} \) of the path graph and for simplification add an additional row penalizing the absolute value of the first entry, \ie
\begin{equation}
	\label{eq:at}
	(\widetilde{D} \theta)_1 = \theta_1, \quad ( \widetilde{D} \theta)_i = \theta_{i} - \theta_{i - 1}, \quad i = 2, \dots, n,
\end{equation}
then one can show that \( (D^\dag)_{i,j} = (D^{-1})_{i,j} = \1(i \geq j) \).
Hence, in this case \( \rho = \sqrt{n} \). Moreover, the inverse scaling factor $\rho$ remains of the order $\sqrt{n}$ even if we close the path into a cycle.
The analyses of~\cite{WanShaSmo15} and~\cite{DalHebLed14} are geared towards refining the estimates used in the proof of Theorem~\ref{thm:Lasso-rates} in order to recover rates faster than \( n^{-1/2} \). Rather, we focus on extending results to the central example of the two dimensional grid, which is paramount in image processing.

We proceed to estimate \( \rho \) in the case of the total variation regularization  on the $N \times N$ 2D grid. Let \( n = N^2 \) and write \( D_1 \in \R^{ (N-1) \times N} \) for the incidence matrix of the path graph on $N$ vertices, \( D_1 x = x_{j+1} - x_{j} \), \( j = 1, \dots, N-1 \) for \( x \in \R^{ N } \).

Reshaping a signal \( \theta \) on the \( N \times N \) square in column major form as a vector \( \theta \in \R^n \), we can write the incidence matrix of the grid as
\begin{equation}
	\label{eq:i}
	D_2 =
	\begin{bmatrix}
		D_1 \otimes I\\
		I \otimes D_1
	\end{bmatrix}\,.
\end{equation}

\begin{proposition}
	\label{prp:tv-cols-2d}
The incidence matrix \( D_2 \) of the 2D grid on $n$ vertices has inverse scaling factor $\rho\lesssim\sqrt{\log n}$.
\end{proposition}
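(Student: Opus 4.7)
The plan is to express the columns of $S = D_2^\dagger$ through the pseudo-inverse of the Laplacian $L = D_2^\top D_2$ and to exploit the explicit Kronecker-product diagonalization of $L$ available on the grid. From the (thin) SVD of $D_2$ one has $D_2^\dagger = L^\dagger D_2^\top$, so if edge $j$ joins vertices $u, v$ then $s_j = L^\dagger(e_u - e_v)$. Writing $(\phi_{k,\ell}, \lambda_{k,\ell})$ for an orthonormal eigenpair decomposition of $L$ gives
\[
  \|s_j\|_2^2 = \sum_{(k,\ell) \neq (0,0)} \lambda_{k,\ell}^{-2}\,(\phi_{k,\ell}(u) - \phi_{k,\ell}(v))^2,
\]
and the task reduces to bounding this quantity by $O(\log N)$ uniformly over edges.

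To compute the eigenpairs I would use the Kronecker decomposition $L = L_1 \otimes I + I \otimes L_1$ with $L_1 = D_1^\top D_1$, which immediately yields $\phi_{k,\ell} = \psi_k \otimes \psi_\ell$ and $\lambda_{k,\ell} = \mu_k + \mu_\ell$, where $\psi_k$ is the $k$-th DCT-II basis vector, $\psi_k(i) = c_k \cos(k\pi(i - 1/2)/N)$, and $\mu_k = 4\sin^2(k\pi/(2N))$. Two features of this formula are crucial: eigenvector delocalization, $\|\psi_k\|_\infty \leq \sqrt{2/N}$, and the spectral decay $\mu_k \geq 4(k/N)^2$ valid for $0 \leq k \leq N-1$ (via $\sin x \geq 2x/\pi$ on $[0,\pi/2]$).

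Next, for a horizontal edge $e = ((i,j),(i+1,j))$, applying the identity $\cos a - \cos b = -2\sin\tfrac{a+b}{2}\sin\tfrac{a-b}{2}$ to $\psi_k(i+1)-\psi_k(i)$ produces a cancellation factor $\sin(k\pi/(2N))$, hence together with delocalization
\[
  (\phi_{k,\ell}(u) - \phi_{k,\ell}(v))^2 \lesssim \frac{\mu_k}{N^2}.
\]
By symmetry, vertical edges give the same bound with $\mu_\ell$ in place of $\mu_k$. Plugging this into the spectral sum and using $\mu_k \leq \mu_k + \mu_\ell$,
\[
  \|s_j\|_2^2 \lesssim \frac{1}{N^2} \sum_{(k,\ell) \neq (0,0)} \frac{\mu_k}{(\mu_k + \mu_\ell)^2} \leq \frac{1}{N^2} \sum_{(k,\ell) \neq (0,0)} \frac{1}{\mu_k + \mu_\ell}.
\]

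Finally, the spectral decay $\mu_k + \mu_\ell \gtrsim (k^2 + \ell^2)/N^2$ reduces the problem to the 2D harmonic-type sum
\[
  \sum_{\substack{(k,\ell) \neq (0,0) \\ 0 \leq k,\ell \leq N-1}} \frac{1}{k^2 + \ell^2} \asymp \log N,
\]
obtained by separating the axis terms (giving a 1D sum $\sum k^{-2} = O(1)$) and comparing the remainder with $\int_1^N \int_1^N (x^2+y^2)^{-1}\,dx\,dy$. Combined with $n = N^2$, this yields $\|s_j\|_2^2 \lesssim \log n$, hence $\rho \lesssim \sqrt{\log n}$. The main obstacle I expect is retaining the sharp cancellation factor $\sin(k\pi/(2N))$ in the cosine difference rather than the naive derivative bound of order $k/N$: it is precisely this extra $\mu_k$ in the numerator that turns the sum from $N$-scale into the logarithmic $O(\log N)$ behavior, and losing it would destroy the result.
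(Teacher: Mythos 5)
Your proposal is correct and follows essentially the same route as the paper: the identity $D_2^\dagger = (D_2^\top D_2)^\dagger D_2^\top$, the Kronecker/DCT diagonalization of the grid Laplacian, the delocalization bound $\|\psi_k\|_\infty^2 \le 2/N$, the order-$k/N$ cancellation in $\psi_k(i+1)-\psi_k(i)$ (the paper gets it from the Lipschitz bound on the cosine, you from the product formula --- equivalent up to constants, since $\sin(k\pi/(2N)) \asymp k/N$ on the whole range), and the quadratic eigenvalue lower bound reducing everything to a 2D sum of order $\log N$. The only cosmetic differences are that you simplify $\mu_k/(\mu_k+\mu_\ell)^2 \le 1/(\mu_k+\mu_\ell)$ before summing, whereas the paper bounds $\sum l^2/(k^2+l^2)^2$ directly and splits off the high-frequency regime separately.
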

We delay the proof to the Appendix, Subsection~\ref{proof:2d}.
By combining the estimates from Lemma \ref{lem:a} and Proposition \ref{prp:tv-cols-2d} with Theorem \ref{thm:Lasso-rates}, we get the following rate for TV regularization on a regular grid in 2D.

\begin{corollary}
\label{cor:oracle-tv-2d}
Fix $\delta \in (0,1)$ and let \( D \) denote the incidence matrix of the 2D grid. Then there exist constants \( C, c > 0 \) such that the TV denoiser $\hat \theta$ with \( \lambda = c \sigma \sqrt{(\log n) \log (e n/\delta)}/n \) defined in  \eqref{eq:y} satisfies
	\begin{align}
		\label{eq:z}
		\frac{1}{n} \| \hat\theta - \theta^\ast \|^2 
		\leq {} & \inf_{\substack{\bar{\theta} \in \R^n\\ T \subseteq [m] }} \left\{ \frac{1}{n} \| \bar{\theta} - \theta^\ast \|^2 + 4 \lambda \| (D \bar{\theta})_{T^c} \|_1 \right\}
	  + \frac{C \sigma^2}{n} \left(|T|(\log n) \log (e n/\delta) + \log (e/\delta)\right),
	\end{align}	
	with probability at least \( 1 - 2 \delta \). In particular, it yields
	$$
	\mse(\hat \theta) \lesssim  \frac{ \sigma\|D\theta^*\|_1\wedge \sigma^2\|D\theta^*\|_0}{n}\log^2 (en/\delta)
	$$
	where $\|D\theta^*\|_0$ denotes the number of nonzero components of $D\theta^*$.
\end{corollary}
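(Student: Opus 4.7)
The plan is to instantiate Theorem~\ref{thm:Lasso-rates} with the two structural estimates specific to the 2D grid that have already been proved, namely Lemma~\ref{lem:a} and Proposition~\ref{prp:tv-cols-2d}, and then to optimize the resulting oracle inequality over the choice of $T$ and $\bar\theta$.

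First, I would record three elementary facts about the grid. Its maximum degree is $d = 4$, so Lemma~\ref{lem:a} yields $\kappa_T^{-2} \leq 4\min\{d,|T|\} \leq 16$ uniformly over every non-empty $T \subseteq [m]$; the number of edges satisfies $m = 2N(N-1) \leq 2n$, so $\log(em/\delta) \lesssim \log(en/\delta)$; and Proposition~\ref{prp:tv-cols-2d} gives $\rho \lesssim \sqrt{\log n}$. Plugging these three estimates into the prescription of $\lambda$ in Theorem~\ref{thm:Lasso-rates} reproduces exactly the regularizer $\lambda = c\sigma\sqrt{(\log n)\log(en/\delta)}/n$ announced in the corollary, while the variance term $8\sigma^2(|T|\rho^2/\kappa_T^2)\log(em/\delta)/n$ of \eqref{eq:u} collapses to $C\sigma^2|T|(\log n)\log(en/\delta)/n$. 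Combining with the approximation term of \eqref{eq:u} and observing that we may freely take the infimum over $T \subseteq [m]$ produces the oracle bound \eqref{eq:z}.

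To obtain the concrete MSE statement I would specialize \eqref{eq:z} to $\bar\theta = \theta^*$, which annihilates the approximation error $\|\bar\theta - \theta^*\|^2/n$, and then consider two extreme choices of $T$. With $T = \emptyset$ only the $\ell_1$ term survives, producing $4\lambda\|D\theta^*\|_1 \lesssim \sigma\|D\theta^*\|_1\sqrt{(\log n)\log(en/\delta)}/n$, which I upper bound by $\sigma\|D\theta^*\|_1\log^2(en/\delta)/n$ using $\sqrt{(\log n)\log(en/\delta)} \leq \log(en/\delta) \leq \log^2(en/\delta)$. With $T = \supp(D\theta^*)$ instead, the $\ell_1$ term vanishes and only $C\sigma^2\|D\theta^*\|_0(\log n)\log(en/\delta)/n \leq C\sigma^2\|D\theta^*\|_0\log^2(en/\delta)/n$ remains. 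Taking the minimum of the two bounds and absorbing the harmless additive $\sigma^2\log(e/\delta)/n$ into the same $\log^2(en/\delta)$ factor yields the displayed estimate.

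I do not foresee any real obstacle in this proof: all of the non-trivial analytic work was carried out in Theorem~\ref{thm:Lasso-rates} and Proposition~\ref{prp:tv-cols-2d}, and what remains here is a mechanical substitution together with mild bookkeeping to confirm that $\log m$, $\log(em/\delta)$, and $\sqrt{(\log n)\log(en/\delta)}$ can all be uniformly controlled by $\log(en/\delta)$ up to numerical constants, so that the $\ell_0$ and $\ell_1$ regimes can be packaged inside a single $\log^2(en/\delta)$ factor.
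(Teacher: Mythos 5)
Your proposal is correct and follows exactly the route the paper intends: the paper gives no separate proof of this corollary, stating only that it follows "by combining the estimates from Lemma~\ref{lem:a} and Proposition~\ref{prp:tv-cols-2d} with Theorem~\ref{thm:Lasso-rates}," and your substitutions ($\kappa_T^{-2}\le 16$ via maximum degree $4$, $\rho\lesssim\sqrt{\log n}$, $m\le 2n$) together with the choices $\bar\theta=\theta^*$ and $T\in\{\emptyset,\supp(D\theta^*)\}$ are precisely that combination. The only cosmetic caveat, shared with the paper's own statement, is that absorbing the additive $\sigma^2\log(e/\delta)/n$ term into the displayed MSE bound implicitly assumes $D\theta^*\neq 0$.
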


\subsection{TV regularization in higher dimensions} 
\label{sec:tv-regularization}

Akin to the 2D case, in \( d \) dimensions, we have \( n = N^d \) and we can write
\begin{equation}
	\label{eq:bf}
	D_d = \begin{bmatrix}
		D_1 \otimes I \otimes \dots \otimes I\\
				I \otimes D_1 \otimes \dots \otimes I\\
		\vdots\\
		I \otimes I \otimes \dots \otimes D_1
	\end{bmatrix}.
\end{equation}
Using similar calculations as in the 2D case, we can show that the inverse scaling factor \( \rho \) is now bounded by a constant, uniformly in $N$.

\begin{proposition}
	\label{prp:tv-cols-dd}
	For the incidence matrix of the regular grid on \( N^d \) nodes in \( d \) dimensions, \( \rho \leq C(d) \), for some \( C(d) > 0 \).
\end{proposition}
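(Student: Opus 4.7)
The plan is to mirror the argument for the 2D case (Proposition~\ref{prp:tv-cols-2d}) using the spectral decomposition of the $d$-dimensional grid Laplacian $L_d = D_d^\top D_d$, but show that the extra dimensions accelerate the eigenvalue decay enough to kill the logarithm that appeared for $d=2$.

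\textbf{Step 1 (Columns of the pseudoinverse via the Laplacian).} Using $D_d^\dagger = L_d^\dagger D_d^\top$, each column $s_j$ equals $L_d^\dagger (e_u - e_v)$ where $(u,v) \in V \times V$ are the endpoints of edge $j$. Since $e_u - e_v \perp \bone_n$, we can write
\begin{equation}
\|s_j\|_2^2 \;=\; \sum_{\mathbf{k} \in \{0,\dots,N-1\}^d \setminus \{\mathbf{0}\}} \frac{\bigl|\phi_{\mathbf{k}}(u) - \phi_{\mathbf{k}}(v)\bigr|^2}{\lambda_{\mathbf{k}}^2},
\end{equation}
where $(\lambda_{\mathbf{k}}, \phi_{\mathbf{k}})$ denote the eigenpairs of $L_d$. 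The tensor representation \eqref{eq:bf} gives $L_d = \sum_{i=1}^d I^{\otimes (i-1)} \otimes L_1 \otimes I^{\otimes (d-i)}$, so $\lambda_{\mathbf{k}} = \sum_{i=1}^d 2(1-\cos(\pi k_i/N))$ and $\phi_{\mathbf{k}}(\mathbf{x}) = \prod_{i=1}^d \psi_{k_i}(x_i)$ with $\psi_k$ the orthonormal cosine eigenvectors of $L_1$.

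\textbf{Step 2 (Pointwise estimates).} By $\sin(x) \ge 2x/\pi$ on $[0,\pi/2]$, I would show $\lambda_{\mathbf{k}} \ge c\,\|\mathbf{k}\|_2^2/N^2$ uniformly in $\mathbf{k}$. For the numerator, assume (WLOG by symmetry of the grid) that $u$ and $v$ differ only in the first coordinate. The uniform bound $|\psi_k(x)| \le \sqrt{2/N}$ combined with the Lipschitz estimate $|\psi_{k_1}(x_1) - \psi_{k_1}(x_1 \pm 1)| \le C k_1/N^{3/2}$ (derivative of a cosine of amplitude $\sqrt{2/N}$ and frequency $\pi k_1/N$) yields
\begin{equation}
\bigl|\phi_{\mathbf{k}}(u) - \phi_{\mathbf{k}}(v)\bigr|^2 \;\le\; C\,\frac{k_1^2}{N^{d+2}}.
\end{equation}

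\textbf{Step 3 (Summing up).} Combining the bounds,
\begin{equation}
\|s_j\|_2^2 \;\lesssim\; N^{2-d} \sum_{\mathbf{k} \ne \mathbf{0}} \frac{k_1^2}{\|\mathbf{k}\|_2^4}.
\end{equation}
Averaging over coordinates (the range is symmetric in the $k_i$), the sum equals $\tfrac{1}{d} \sum_{\mathbf{k} \ne \mathbf{0}} \|\mathbf{k}\|_2^{-2}$. A standard integral comparison in dimension $d$ gives
\begin{equation}
\sum_{\mathbf{k} \in [0,N-1]^d \setminus \{\mathbf{0}\}} \|\mathbf{k}\|_2^{-2} \;\lesssim\; \int_1^{CN} r^{d-3}\,\mathrm{d}r \;\lesssim\; C(d)\,N^{d-2},
\end{equation}
valid precisely because $d \ge 3$ makes the integrand integrable up to the polynomial factor. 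Plugging in gives $\|s_j\|_2^2 \le C(d)$ independently of $N$, as claimed.

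\textbf{Main obstacle.} The crux, and the only place dimension enters qualitatively, is the integral comparison in Step 3: the same template in $d=2$ produces the $\log N$ of Proposition~\ref{prp:tv-cols-2d}, while $d \ge 3$ gives a clean cancellation $N^{2-d} \cdot N^{d-2} = O(1)$. The rest is routine bookkeeping; the only small technical care is that the eigenvalue lower bound $\lambda_{\mathbf{k}} \gtrsim \|\mathbf{k}\|_2^2/N^2$ is tight only for low frequencies, but this is exactly where the sum concentrates, and high frequencies contribute harmlessly because $\lambda_{\mathbf{k}}$ is bounded below by a positive constant there.
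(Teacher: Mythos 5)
Your proof is correct and follows essentially the same route as the paper: the tensor-product spectral decomposition of the grid Laplacian, the delocalization bound \( \langle v_{k}, e_{i} \rangle^2 \le 2/N \) together with the Lipschitz gain \( \langle v_{k_1}, d_{i_1} \rangle^2 \lesssim k_1^2/N^3 \), and an integral comparison showing the resulting lattice sum is \( O(N^{d-2}) \) for \( d \ge 3 \). Your symmetrization identity \( \sum_{\mathbf{k}} k_1^2 \|\mathbf{k}\|^{-4} = \tfrac{1}{d}\sum_{\mathbf{k}} \|\mathbf{k}\|^{-2} \) followed by a single shell-counting estimate is a cleaner way to finish than the paper's iterated integration and its induction over dimensions to exclude multi-indices with zero entries, but the substance of the argument is the same.
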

We delay the proof to the Appendix, subsection~\ref{proof:3d}.
It readily yields the following rate for TV regularization on a regular grid in \( d\ge 3 \) dimensions:

\begin{corollary}
\label{cor:oracle-tv}
Fix $\delta \in (0,1)$, an integer $d \ge 3$ and let \( D_d \) denote the incidence matrix of the $d$-dimensional grid. Then there exist constants \( C, c > 0 \) such that the TV denoiser $\hat \theta$ defined in  \eqref{eq:y} with \( \lambda = c \sigma \sqrt{ \log (e n/\delta)}/n \) satisfies
	\begin{align}
		\label{eq:oihighdim}
		\frac{1}{n} \| \hat\theta - \theta^\ast \|^2
		\leq {} & \inf_{\substack{\bar{\theta} \in \R^n\\ T \subseteq [m] }} \left\{ \frac{1}{n} \| \bar{\theta} - \theta^\ast \|^2 + 4 \lambda \| (D_d \bar{\theta})_{T^c} \|_1 \right\}
		& + \frac{C \sigma^2}{n} \left(|T| \log (e n/\delta) + \log (e/\delta)\right),
	\end{align}	
	with probability at least \( 1 - 2 \delta \). In particular, it yields
	$$
	\mse(\hat \theta) \lesssim  \frac{ \sigma\|D_d\theta^*\|_1\wedge \sigma^2\|D_d\theta^*\|_0}{n}\log (en/\delta)\,.
	$$
\end{corollary}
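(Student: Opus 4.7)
The proof is a direct application of Theorem~\ref{thm:Lasso-rates} combined with the two key geometric estimates already established in the excerpt, so the strategy parallels the 2D argument behind Corollary~\ref{cor:oracle-tv-2d}.

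First, I would plug in the structural constants. For the $d$-dimensional grid, the maximum vertex degree is $2d$, so Lemma~\ref{lem:a} gives the uniform bound $\kappa_T^{-2} \le 4\cdot 2d = 8d$, which is a constant depending only on $d$ (and hence can be absorbed into $C$). Proposition~\ref{prp:tv-cols-dd} gives $\rho^2 \le C(d)$, also a dimensional constant. Finally, the number of edges satisfies $m \le dn$, so $\log(em/\delta) \le \log(en/\delta) + \log d \lesssim \log(en/\delta)$. With these inputs, the choice $\lambda = \frac{1}{n}\sigma\rho\sqrt{2\log(em/\delta)}$ in Theorem~\ref{thm:Lasso-rates} reduces to $\lambda \le c\sigma\sqrt{\log(en/\delta)}/n$ for a suitable $c=c(d)>0$, matching the statement of the corollary.

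Next, I substitute these estimates into the sharp oracle inequality \eqref{eq:u} of Theorem~\ref{thm:Lasso-rates}. The penalty term $\frac{8\sigma^2|T|\rho^2}{n\kappa_T^2}\log(em/\delta)$ becomes $\frac{C\sigma^2 |T|}{n}\log(en/\delta)$, and the residual $\frac{8\sigma^2}{n}\log(e/\delta)$ carries over unchanged. This immediately yields the oracle inequality \eqref{eq:oihighdim} on the event of probability at least $1-2\delta$ given by Theorem~\ref{thm:Lasso-rates}.

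To derive the MSE corollary, I specialize the oracle inequality by taking $\bar\theta = \theta^\ast$, which kills the approximation term $\frac{1}{n}\|\bar\theta-\theta^\ast\|^2$. Then I optimize the choice of $T$:
\begin{itemize}
\item Choosing $T = \mathrm{supp}(D_d\theta^\ast)$ makes $\|(D_d\theta^\ast)_{T^c}\|_1 = 0$ and gives the $\ell_0$-rate $\frac{C\sigma^2\|D_d\theta^\ast\|_0}{n}\log(en/\delta)$.
\item Choosing $T = \emptyset$ gives the bound $4\lambda\|D_d\theta^\ast\|_1 \lesssim \frac{\sigma\|D_d\theta^\ast\|_1}{n}\sqrt{\log(en/\delta)}$, which is no larger than $\frac{\sigma\|D_d\theta^\ast\|_1}{n}\log(en/\delta)$.
\end{itemize}
Taking the minimum of these two choices, and absorbing the $\log(e/\delta)$ term (which is dominated by the others whenever $\|D_d\theta^\ast\|_0 \ge 1$ or $\|D_d\theta^\ast\|_1 \gtrsim \sigma$, and otherwise is of lower order), yields the stated bound $\mathsf{MSE}(\hat\theta)\lesssim \frac{\sigma\|D_d\theta^\ast\|_1 \wedge \sigma^2\|D_d\theta^\ast\|_0}{n}\log(en/\delta)$.

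There is no real obstacle here: the entire content has been packaged into Theorem~\ref{thm:Lasso-rates}, Lemma~\ref{lem:a}, and Proposition~\ref{prp:tv-cols-dd}. The only thing to watch out for is that in higher dimensions $\rho$ is bounded by a constant (in contrast to the $\sqrt{\log n}$ blow-up in 2D), which is precisely why a single $\log(en/\delta)$ factor suffices and the $\log^2$ factor of Corollary~\ref{cor:oracle-tv-2d} improves to $\log$.
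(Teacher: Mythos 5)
Your proposal is correct and follows exactly the route the paper intends: the corollary is stated as an immediate consequence of Theorem~\ref{thm:Lasso-rates} combined with the degree bound from Lemma~\ref{lem:a} and the constant bound on $\rho$ from Proposition~\ref{prp:tv-cols-dd}, together with $m\le dn$ and the two choices $T=\mathrm{supp}(D_d\theta^\ast)$ versus $T=\emptyset$ for the MSE bound. Your handling of the $d$-dependent constants and of the absorbed $\log(e/\delta)$ term matches the paper's conventions.
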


\subsection{The hypercube}
\label{sub:hypercube}

We note that in the case \( N = 2 \), the grid becomes the \( d \)-dimensional hypercube.
In this case, we can refine our analysis in this case to get the same result as in Proposition \ref{prp:tv-cols-dd} without dependence on the dimension.

\begin{proposition}
	\label{prp:hypercube}
For any $d \ge 1$, the inverse scaling factor associated to the $d$-dimensional hypercube satisfies \( \rho \leq 1 \).
\end{proposition}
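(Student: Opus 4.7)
The inverse scaling factor is $\rho = \max_j \|s_j\|_2$ with $s_j = D^\dagger e_j^{(m)}$, where $e_j^{(m)}$ denotes the $j$-th standard basis vector of $\R^m$. Since the hypercube is connected, $D$ has full column rank up to its one-dimensional kernel $\operatorname{span}(\bone_n)$, and one verifies from the SVD that $D^\dagger = L^\dagger D^\top$, where $L = D^\top D$ is the unnormalized Laplacian. Hence for the edge $j$ joining adjacent vertices $u, v \in \{0,1\}^d$, one has $s_j = L^\dagger(e_u - e_v)$ (up to sign), and $\|s_j\|_2^2 = (e_u-e_v)^\top (L^\dagger)^2 (e_u-e_v)$.

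The next step is to exploit the explicit spectral decomposition of the hypercube Laplacian. Writing $L = \sum_{k=1}^d I \otimes \cdots \otimes L_1 \otimes \cdots \otimes I$ with the single copy of $L_1 = \bigl(\begin{smallmatrix}1 & -1\\ -1 & 1\end{smallmatrix}\bigr)$ in the $k$-th slot, one obtains an orthonormal eigenbasis indexed by subsets $S \subseteq [d]$: the Walsh characters
\[
\psi_S(x) = 2^{-d/2}\prod_{i \in S}(-1)^{x_i}, \qquad x \in \{0,1\}^d,
\]
with eigenvalues $2|S|$. Consequently $L^\dagger = \sum_{S \neq \emptyset}\frac{1}{2|S|}\psi_S\psi_S^\top$.

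Now the adjacent vertices $u$ and $v$ differ in exactly one coordinate, say the $k$-th. A short computation shows $\psi_S(u) - \psi_S(v) = 2\psi_S(u)\,\mathds{1}[k \in S]$, so
\[
\|s_j\|_2^2 \;=\; \sum_{S\,:\,k\in S}\frac{(\psi_S(u)-\psi_S(v))^2}{(2|S|)^2} \;=\; \frac{1}{2^d}\sum_{S\,:\,k\in S}\frac{1}{|S|^2}.
\]
Grouping by $|S| = j$ gives $\sum_{S \ni k} |S|^{-2} = \sum_{j=1}^{d}\binom{d-1}{j-1}j^{-2}$, which is the only remaining quantity to bound.

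Using $j^{-2} \le j^{-1}$ and the identity $\binom{d-1}{j-1}/j = \binom{d}{j}/d$, this sum is at most $\tfrac{1}{d}\sum_{j=1}^d \binom{d}{j} = (2^d-1)/d$. Plugging back yields $\|s_j\|_2^2 \le (2^d-1)/(d\,2^d) \le 1/d \le 1$ uniformly in $j$, so $\rho \le 1$ as claimed. The main conceptual step is identifying the eigenbasis of $L$ with the Walsh characters; once this is in place, everything else is elementary binomial estimation. The only mildly delicate point is the justification of $D^\dagger = L^\dagger D^\top$ despite $L$ being singular, which follows from the singular value decomposition of $D$ and from $\bone_n^\top D^\top = 0$.
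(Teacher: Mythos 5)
Your proof is correct and follows essentially the same route as the paper: the Walsh characters $\psi_S$ with eigenvalues $2|S|$ are exactly the tensor-product eigenbasis $v_{k_1}\otimes\dots\otimes v_{k_d}$ (with $S=\{j: k_j=1\}$) that the paper uses by specializing its $d$-dimensional grid argument to $N=2$, and both proofs finish with an elementary binomial estimate. Your version is marginally sharper, since tracking that $\langle \psi_S, e_u-e_v\rangle$ vanishes unless the flipped coordinate lies in $S$ yields $\rho^2\le 1/d$ rather than just $\rho^2\le 1$.
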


\begin{proof}
  We use the same analysis as in the proof of Proposition \ref{prp:tv-cols-dd}, Subsection \ref{proof:3d}, noting that the eigenvectors of the 1-dimensional hypercube are given by \( v_1 = \begin{bmatrix}1& 1\end{bmatrix}^\top/\sqrt{2} \) and \( v_2 = \begin{bmatrix}1 & -1\end{bmatrix}^\top/\sqrt{2} \) with associated eigenvalues \( 0 \) and \( 2 \), respectively.
	Using the same notation as before, we have
	\begin{equation}
		\label{eq:cq}
		\langle v_{k_j} , e_{i_j} \rangle^2 \leq 1/2, \quad \text{for } \bm{k}, \bm{i} \in \{0, 1\}^d, j \in [d], \quad
		\langle v_{k_1} , d_{i_1} \rangle^2 \leq 2.
	\end{equation}
	This gives
	\begin{align}
		\label{eq:cm}
		\| s_{ \bm{i}}^{(1)} \|_2^2
		= {} & \sum_{\substack{\bm{k} \in \{0,1\}^d \setminus \{ 0 \}}} \left(\sum_{j=1}^{d} \lambda_{k_j}\right)^{-2} \langle v_{k_1} , d_{i_1} \rangle^2 \prod_{j = 2}^{d} \langle v_{k_j} , e_{i_j} \rangle^2\\
	\leq {} & 2^{2-d} \sum_{ \bm{k} \in \{0,1\}^d \setminus \{0\}} \left( \sum_{j=1}^{d} 2 k_j \right)^{-2} \leq 1.
	\end{align}
\end{proof}

\begin{corollary}
\label{cor:oracle-cube}
Fix $\delta \in (0,1)$, an integer $d \ge 1$ and let \( D_\square \) denote the incidence matrix of the $d$-dimensional hypercube. Then there exist constants \( C, c > 0 \) such that the TV denoiser $\hat \theta$ defined in  \eqref{eq:y} with $n=2^d$ and \( \lambda = c \sigma \sqrt{ \log (e n/\delta)}/n \) satisfies
	\begin{align}
		\label{eq:oicube}
		\frac{1}{n} \| \hat\theta - \theta^\ast \|^2 
	 	\leq {} & \inf_{\substack{\bar{\theta} \in \R^n\\ T \subseteq [m] }} \left\{ \frac{1}{n} \| \bar{\theta} - \theta^\ast \|^2 + 4 \lambda \| (D_\square \bar{\theta})_{T^c} \|_1 \right\}
		+ \frac{C \sigma^2}{n} \left(d |T| \log (e n/\delta) + \log (e/\delta)\right),
	\end{align}	
	with probability at least \( 1 - 2 \delta \). In particular, it yields
	$$
	\mse(\hat \theta) \lesssim  \frac{ \sigma\|D_\square\theta^*\|_1\wedge \sigma^2 d \|D_\square\theta^*\|_0}{n}\log (en/\delta)\,.
	$$
\end{corollary}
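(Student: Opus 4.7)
The plan is to obtain Corollary \ref{cor:oracle-cube} as a direct instantiation of Theorem \ref{thm:Lasso-rates} for the incidence matrix $D_\square$ of the $d$-dimensional hypercube, feeding in the bound $\rho \le 1$ from Proposition \ref{prp:hypercube} together with the compatibility estimate from Lemma \ref{lem:a}. The structure mirrors the derivations of Corollary \ref{cor:oracle-tv-2d} and Corollary \ref{cor:oracle-tv}, so the proof reduces to bookkeeping of the constants produced by the relevant parameters of the hypercube.

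First I would record the basic parameters: the hypercube has $n = 2^d$ vertices, $m = d \cdot 2^{d-1}$ edges, and maximum degree $d$. Since $m \le dn$ and $d = \log_2 n$, we have $\log(em/\delta) \le \log(en/\delta) + \log d \lesssim \log(en/\delta)$, so the prescribed choice $\lambda = c\sigma\sqrt{\log(en/\delta)}/n$ is (up to a numerical constant) exactly the regularization level dictated by Theorem \ref{thm:Lasso-rates}, namely $\lambda = \frac{1}{n}\sigma\rho\sqrt{2\log(em/\delta)}$, once we use $\rho \le 1$. Next, Lemma \ref{lem:a} applied to the hypercube (whose maximum degree equals $d$) yields $\kappa_T^{-2} \le 4\min\{d,|T|\} \le 4d$ uniformly in $T$. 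Substituting $\rho^2 \le 1$, $\kappa_T^{-2} \le 4d$, and the logarithmic simplification above into the bound \eqref{eq:u} of Theorem \ref{thm:Lasso-rates} gives
\begin{equation*}
\frac{1}{n}\|\hat\theta - \theta^\ast\|^2 \le \inf_{\bar\theta, T}\Bigl\{\frac{1}{n}\|\bar\theta - \theta^\ast\|^2 + 4\lambda\|(D_\square\bar\theta)_{T^c}\|_1\Bigr\} + \frac{C\sigma^2}{n}\bigl(d|T|\log(en/\delta) + \log(e/\delta)\bigr),
\end{equation*}
which is exactly \eqref{eq:oicube}.

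To deduce the MSE consequence, I would take $\bar\theta = \theta^\ast$ in the oracle inequality and consider two natural choices of $T$. Setting $T = \emptyset$ drops the $d|T|$ term and leaves $4\lambda\|D_\square\theta^\ast\|_1 \lesssim \sigma\|D_\square\theta^\ast\|_1 \sqrt{\log(en/\delta)}/n$, which is dominated by $\sigma\|D_\square\theta^\ast\|_1\log(en/\delta)/n$. Alternatively, setting $T = \operatorname{supp}(D_\square\theta^\ast)$ makes $\|(D_\square\theta^\ast)_{T^c}\|_1 = 0$ and produces the term $\sigma^2 d\|D_\square\theta^\ast\|_0 \log(en/\delta)/n$. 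Taking the minimum of these two choices gives the claimed bound on $\mse(\hat\theta)$. There is essentially no obstacle in this argument, as the substantive work—bounding the inverse scaling factor $\rho$—has been carried out in Proposition \ref{prp:hypercube}; the only care required is verifying that the $\log d$ coming from $m = d\cdot 2^{d-1}$ is absorbed into $\log(en/\delta)$, which indeed holds since $d = \log_2 n$.
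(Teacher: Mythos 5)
Your proposal is correct and follows exactly the route the paper intends: instantiate Theorem~\ref{thm:Lasso-rates} with $\rho\le 1$ from Proposition~\ref{prp:hypercube} and $\kappa_T^{-2}\le 4\min\{d,|T|\}$ from Lemma~\ref{lem:a}, absorb the $\log d$ discrepancy between $\log(em/\delta)$ and $\log(en/\delta)$, and then specialize $T=\emptyset$ or $T=\operatorname{supp}(D_\square\theta^\ast)$ for the two branches of the MSE bound. The bookkeeping (including $m=d\cdot 2^{d-1}$ and the factor $d$ multiplying $|T|$) is all accurate.
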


\section{Other graphs}
\label{sec:other-graphs}

\subsection{Complete graph} 
\label{sub:complete_graph}

Considering jumps along the complete graph has been proposed as a way to regularize when there is no actual structural prior information available; see \cite{She10} where it has been studied under the name \emph{clustered Lasso}.

\begin{proposition}
	\label{prp:comp-graph}
	For the complete graph \( K_n \), we have \( \kappa \gtrsim 1/\sqrt{n} \) and \( \rho \lesssim 1/n \).
\end{proposition}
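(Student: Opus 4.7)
Both bounds fall out of the fact that $K_n$ has an extraordinarily simple Laplacian spectrum. The plan is to compute $\rho$ exactly by writing $D^\dagger$ through $L^\dagger$, and to bound $\kappa$ via Cauchy--Schwarz plus the top eigenvalue of $L$.

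\textbf{Step 1: spectrum of $L$.} I would first observe that $L = D^\top D = n I_n - J_n$, where $J_n = \bone_n \bone_n^\top$. Hence $L$ has eigenvalue $0$ on $\linspan(\bone_n)$ and eigenvalue $n$ on $\bone_n^\perp$, and its Moore--Penrose pseudoinverse is
\begin{equation*}
L^\dagger = \frac{1}{n}\Bigl(I_n - \frac{1}{n} J_n\Bigr).
\end{equation*}

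\textbf{Step 2: bound on $\rho$.} Since $\ker D = \linspan(\bone_n)$, one has the identity $D^\dagger = L^\dagger D^\top$. For an edge $j = (a,b)$, the corresponding column of $D^\top$ is $\pm(e_a - e_b)$, which is orthogonal to $\bone_n$, so $J_n(e_a - e_b) = 0$ and
\begin{equation*}
s_j \;=\; L^\dagger\bigl(\pm(e_a - e_b)\bigr) \;=\; \pm \frac{1}{n}(e_a - e_b).
\end{equation*}
This yields $\|s_j\|_2 = \sqrt{2}/n$ for every edge, hence $\rho(D) = \sqrt{2}/n \lesssim 1/n$.

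\textbf{Step 3: bound on $\kappa$.} For any $\theta \in \R^n$ and any nonempty $T \subseteq [m]$, Cauchy--Schwarz gives $\|(D\theta)_T\|_1 \le \sqrt{|T|}\, \|D\theta\|_2$, and since $\|D\theta\|_2^2 = \theta^\top L \theta \le \lambda_{\max}(L)\|\theta\|_2^2 = n\|\theta\|_2^2$, combining the two inequalities yields
\begin{equation*}
\|(D\theta)_T\|_1 \;\le\; \sqrt{n\,|T|}\;\|\theta\|_2.
\end{equation*}
Rearranging the definition of $\kappa_T$ in \eqref{eq:v} gives $\kappa_T \ge 1/\sqrt{n}$ for every $T \neq \emptyset$, so $\kappa(D) \ge 1/\sqrt{n}$.

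There is no substantial obstacle here: the only non-routine ingredient is the identity $D^\dagger = L^\dagger D^\top$ (which holds because $D$ is injective on $\bone_n^\perp$ and vanishes on $\bone_n$), and the rest is a one-line spectral calculation.
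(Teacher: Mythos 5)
Your proof is correct and follows essentially the same route as the paper: both exploit that $L = nI_n - \bone_n\bone_n^\top$ has all nonzero eigenvalues equal to $n$ and compute $\rho$ via $D^\dagger = (D^\top D)^\dagger D^\top$, with your version giving the exact value $\|s_j\|_2 = \sqrt{2}/n$ rather than just the upper bound. The only cosmetic difference is in the $\kappa$ bound, where the paper invokes its degree-based Lemma~\ref{lem:a} while you argue directly via $\lambda_{\max}(L) = n$; for $K_n$ these give the same $1/\sqrt{n}$ rate.
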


\begin{proof}
  The bound on \( \kappa \) follows from Lemma \ref{lem:a}.
	To bound \( \rho \), note that we can write the pseudoinverse of the incidence matrix as
	\begin{equation}
		\label{eq:ch}
		S = D^\dag = (D^\top D)^\dag D^\top.
	\end{equation}
	The matrix \( D^\top D \) is the graph Laplacian of the complete graph which has the form \( n I - \bone \bone^\top \) from which we can read off its eigenvalues as \( \lambda_1 = 0 \), \( \lambda_i = n \), for \( i = 2, \dots, n \).
	Choose an eigenbasis \( \{ v_i \}_{i = 1, \dots, n} \) for \( D^\top D \).
	Then,
	\begin{equation}
		\label{eq:ci}
		\| s_j \|_2^2 = \sum_{k = 2}^{n} \frac{1}{\lambda_k^2} \langle v_k , d_j \rangle^2
		= \frac{1}{n^2} \sum_{k=2}^{n} \langle v_k , d_j \rangle^2 \leq \frac{1}{n^2} \| d_j \|_2^2 \leq \frac{2}{n^2}\,,
	\end{equation}
	for all \( j \).
\end{proof}
It yields the following corollary.
\begin{corollary}
\label{cor:oracle-complete}
Fix $\delta \in (0,1)$,  and let \( D_{K_n} \) denote the incidence matrix of the complete graph on $n$ vertices. Then there exist constants \( C, c > 0 \) such that the TV denoiser $\hat \theta$ defined in  \eqref{eq:y} with  \( \lambda = c \sigma \sqrt{ \log (e n/\delta)}/n^2 \) satisfies
	\begin{align}
		\label{eq:oicomplete}
		\frac{1}{n} \| \hat\theta - \theta^\ast \|^2
		\leq {} & \inf_{\substack{\bar{\theta} \in \R^n\\ T \subseteq [m] }} \left\{ \frac{1}{n} \| \bar{\theta} - \theta^\ast \|^2 + 4 \lambda \| (D_{K_n} \bar{\theta})_{T^c} \|_1 \right\}
		+ \frac{C \sigma^2}{n^2} \left(|T| \log (e n/\delta) + \log (e/\delta)\right),
	\end{align}	
	with probability at least \( 1 - 2 \delta \). In particular, it yields
	$$
	\mse(\hat \theta) \lesssim  \frac{ \sigma\|D_{K_n}\theta^*\|_1\wedge \sigma^2\|D_{K_n}\theta^*\|_0}{n^2}\log (en/\delta)\,.
	$$
\end{corollary}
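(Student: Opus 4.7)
The plan is to obtain \eqref{eq:oicomplete} as a direct specialization of the general sharp oracle inequality in Theorem~\ref{thm:Lasso-rates}, using the graph-specific bounds from Proposition~\ref{prp:comp-graph}. The complete graph $K_n$ has $m=\binom{n}{2}$ edges, so $\log(em/\delta)\lesssim \log(en/\delta)$ up to an absolute constant; Proposition~\ref{prp:comp-graph} furnishes $\rho\lesssim 1/n$ and $\kappa_T\gtrsim 1/\sqrt{n}$ for every nonempty $T\subseteq[m]$. There is no real obstacle; the entire content of the corollary is carried by those two ingredients, and what remains is routine bookkeeping of the resulting exponents of $n$.

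First I would verify the choice of regularization parameter: substituting $\rho\lesssim 1/n$ into \eqref{eq:x} gives
\[
\lambda \;=\; \frac{\sigma\rho}{n}\sqrt{2\log\!\big(\tfrac{em}{\delta}\big)} \;\lesssim\; \frac{\sigma}{n^{2}}\sqrt{\log(en/\delta)},
\]
which agrees with the choice made in the statement (absorbing universal constants into $c$). Next, for the remainder term in Theorem~\ref{thm:Lasso-rates}, the key factor is controlled by $|T|\rho^{2}/\kappa_T^{2}\lesssim |T|\cdot (1/n^{2})\cdot n = |T|/n$. Multiplying by $8\sigma^{2}/n$ and by $\log(em/\delta)\lesssim\log(en/\delta)$ produces the leading contribution $\sigma^{2}|T|\log(en/\delta)/n^{2}$, while the residual $(8\sigma^{2}/n)\log(e/\delta)$ from Theorem~\ref{thm:Lasso-rates} is absorbed into the single constant $C$ together with the $|T|$ term. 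Combining these estimates with the oracle part of Theorem~\ref{thm:Lasso-rates} yields \eqref{eq:oicomplete}.

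Finally, the explicit MSE bound follows by specializing the oracle inequality to $\bar\theta=\theta^{\ast}$ and optimizing over $T$. The choice $T=\operatorname{supp}(D_{K_n}\theta^{\ast})$ forces the oracle term to vanish and delivers the $\ell_{0}$-type rate $\sigma^{2}\|D_{K_n}\theta^{\ast}\|_{0}\log(en/\delta)/n^{2}$. The alternative choice $T=\emptyset$ (together with $\kappa_\emptyset=1$) kills the second term and leaves $4\lambda\|D_{K_n}\theta^{\ast}\|_{1}\lesssim \sigma\|D_{K_n}\theta^{\ast}\|_{1}\sqrt{\log(en/\delta)}/n^{2}$, bounded by $\sigma\|D_{K_n}\theta^{\ast}\|_{1}\log(en/\delta)/n^{2}$. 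Taking the minimum of the two rates yields the claimed MSE bound.
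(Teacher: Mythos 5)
Your route is exactly the paper's: no separate proof of Corollary~\ref{cor:oracle-complete} is given there beyond the remark that it follows from Theorem~\ref{thm:Lasso-rates} once Proposition~\ref{prp:comp-graph} (via Lemma~\ref{lem:a}) supplies $\rho\lesssim 1/n$ and $\kappa_T\gtrsim 1/\sqrt{n}$, and your bookkeeping of $\lambda$, of $|T|\rho^2/\kappa_T^2\lesssim |T|/n$, and of the two choices $T=\operatorname{supp}(D_{K_n}\theta^\ast)$ versus $T=\emptyset$ is correct. One step does not survive scrutiny as written: the residual term in Theorem~\ref{thm:Lasso-rates} is $8\sigma^2\log(e/\delta)/n$ --- it comes from the projection of $\varepsilon$ onto the one-dimensional kernel of $D$ and does not improve for the complete graph --- so it is a factor of $n$ too large to be ``absorbed into the constant $C$'' in front of $\sigma^2\log(e/\delta)/n^2$ in \eqref{eq:oicomplete}. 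This is really an infelicity of the corollary as displayed (compare Corollary~\ref{cor:oracle-tv-2d}, where the analogous term correctly carries $1/n$); your argument only delivers, and should state, that term at order $\sigma^2\log(e/\delta)/n$. The final MSE display is unaffected in the $\ell_0$ branch, since on $K_n$ any non-constant $\theta^\ast$ has $\|D_{K_n}\theta^\ast\|_0\ge n-1$, so $\sigma^2\|D_{K_n}\theta^\ast\|_0\log(en/\delta)/n^2$ already dominates $\sigma^2\log(e/\delta)/n$.
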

	This implies that up to log factors, one performance bound on the TV denoiser for the clique is of the order \( |T|/n^2 \), where \( |T| \) is the number of edges with a jump in the ground truth \( \theta^\ast \).
	In the case of a signal that takes on \( k \ll n\) different values, with \( k-1 \) of them attained on small islands of size \( l \ll n \), this leads to a rate of \( k l/n \), the same we would get for the Lasso if the background value on the complement of the islands was zero.

	On the other hand, if there are two large components with different values, \( |T| \) will be of the order of \( n^2 \), so the result is not informative in this case.

\subsection{Star graph} 
\label{sub:star_graph}

Denote by \( S_n \) the star graph on \( n \) nodes, having one center node that is connected to \( n-1 \) leaves.
Note that the question of sparsistency of TV denoising for this graph, together with related ones, has been studied in \cite{OllVia15} as a way to regularize stratified data.

\begin{proposition}
	\label{prp:star-graph}
	For the star graph \( S_n \), we have \( \kappa_T \gtrsim 1/\sqrt{|T|} \) and \( \rho \leq 1 \).
\end{proposition}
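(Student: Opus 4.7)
The bound on $\kappa_T$ is immediate from Lemma~\ref{lem:a}: the star graph $S_n$ has maximum degree $d = n-1$ (attained at the center), so the lemma yields
\[
\kappa_T \geq \frac{1}{2\min\{\sqrt{n-1},\sqrt{|T|}\}} \geq \frac{1}{2\sqrt{|T|}}.
\]
Thus only the inverse scaling factor requires genuine computation, and the plan is to compute $D^\dagger$ explicitly for $S_n$ and then read off $\rho$ from its columns.

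Order the vertices so that the center is vertex $1$ and the leaves are vertices $2,\dots,n$. Then, following the paper's sign convention, the incidence matrix takes the clean block form
\[
D = [\,\mathbf{1}_{n-1}\ \ -I_{n-1}\,] \in \R^{(n-1)\times n}.
\]
Since $S_n$ is connected and has exactly $n-1$ edges (a spanning tree), $D$ has full row rank, so $D^\dagger = D^\top (DD^\top)^{-1}$. The plan is to compute $DD^\top$ directly from the block form, which gives
\[
DD^\top = \mathbf{1}_{n-1}\mathbf{1}_{n-1}^\top + I_{n-1}.
\]
By the Sherman--Morrison formula, its inverse is
\[
(DD^\top)^{-1} = I_{n-1} - \tfrac{1}{n}\mathbf{1}_{n-1}\mathbf{1}_{n-1}^\top.
\]

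To finish, I would write out the column $s_e$ of $D^\dagger = D^\top(DD^\top)^{-1}$ corresponding to the edge $e = (1,i)$. Since the $e$-th column of $D^\top$ is $d_e = \e_1 - \e_i$ and $D^\top \mathbf{1}_{n-1} = (n-1,-1,\dots,-1)^\top$, one obtains
\[
s_e = (\e_1 - \e_i) - \tfrac{1}{n}(n-1,-1,\dots,-1)^\top,
\]
whose coordinates are $1/n$ at position $1$, $-(n-1)/n$ at position $i$, and $1/n$ elsewhere. A direct squared-norm computation gives
\[
\|s_e\|_2^2 = \tfrac{1}{n^2} + \tfrac{(n-1)^2}{n^2} + (n-2)\tfrac{1}{n^2} = \tfrac{n-1}{n} \leq 1,
\]
uniformly in $e$, yielding $\rho = \max_e \|s_e\|_2 \leq 1$.

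There is no real obstacle here: once the block form of $D$ is chosen and Sherman--Morrison is applied, both claims reduce to inspection. The only mild point of care is to notice that $\kappa_T$ is genuinely of order $1/\sqrt{|T|}$ (not $1/\sqrt{d}$), because the degree blow-up at the center makes the bound $1/\sqrt{n-1}$ from Lemma~\ref{lem:a} much worse than the $1/\sqrt{|T|}$ side of the minimum as soon as $|T|$ is small.
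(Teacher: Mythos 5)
Your proof is correct and follows essentially the same route as the paper: both reduce the compatibility bound to Lemma~\ref{lem:a} and then compute the columns of $D^\dagger$ explicitly, arriving at the same column (entry $-(n-1)/n$ at the leaf, $1/n$ elsewhere) and the same norm $\|s_e\|_2^2=(n-1)/n$. The only difference is presentational — you derive $D^\dagger=D^\top(DD^\top)^{-1}$ via Sherman--Morrison using that the star is a spanning tree, whereas the paper states the pseudoinverse and verifies it directly.
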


\begin{proof}
	The estimate on \( \kappa \) follows directly from Lemma \ref{lem:a}. To compute \( \rho \), observe that
	\begin{equation}
		\label{eq:co}
		d_{i, j} = \left\{
		\begin{aligned}
			1, \quad {} & j = 1,\\
			-1, \quad {} & i = j - 1 \geq 2,\\
			0, \quad {} & \text{otherwise},
		\end{aligned}
		\right.\qquad
		s_{i, j} = D^\dag_{i,j} = \left\{
		\begin{aligned}
			- \frac{n-1}{n}, \quad {} & i = j + 1,\\
			\frac{1}{n}, \quad {} & \text{otherwise},
		\end{aligned}
		\right.
	\end{equation}
	whence the properties of the pseudoinverse can be verified by direct calculation.
	From this, we can estimate the norm of the columns of \( S \) by
	\begin{equation*}
		\label{eq:cp}
		\left\| s_j \right\|^2 = \sum_{i = 1}^{n-1} \frac{1}{n^2} + \left( \frac{n-1}{n} \right)^2 = \frac{n^2 - n}{n^2} \leq 1.
	\end{equation*}
\end{proof}

The following corollary immediately follows.
\begin{corollary}
\label{cor:oracle-star}
Fix $\delta \in (0,1)$,  and let \( D_{\star} \) denote the incidence matrix of the star graph on $n$ vertices. Then there exist constants \( C, c > 0 \) such that the TV denoiser $\hat \theta$ defined in  \eqref{eq:y} with  \( \lambda = c \sigma \sqrt{ \log (e n/\delta)}/n \) satisfies
	\begin{align}
		\label{eq:oistar}
		\frac{1}{n} \| \hat\theta - \theta^\ast \|^2
		\leq {} & \inf_{\substack{\bar{\theta} \in \R^n\\ T \subseteq [m] }} \left\{ \frac{1}{n} \| \bar{\theta} - \theta^\ast \|^2 + 4 \lambda \| (D_{\star} \bar{\theta})_{T^c} \|_1 \right\}
		+ \frac{C \sigma^2}{n} \left(|T|^2 \log (e n/\delta) + \log (e/\delta)\right),
	\end{align}	
	with probability at least \( 1 - 2 \delta \). In particular, it yields
	$$
	\mse(\hat \theta) \lesssim  \frac{ \sigma\|D_{\star}\theta^*\|_1\wedge \sigma^2\|D_{\star}\theta^*\|_0^2}{n}\log (en/\delta)\,.
	$$
\end{corollary}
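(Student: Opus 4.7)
The plan is to obtain the corollary as a direct specialization of the sharp oracle inequality of Theorem~\ref{thm:Lasso-rates} to the star graph, fed with the two parameter estimates from Proposition~\ref{prp:star-graph}: the inverse scaling factor satisfies \(\rho \le 1\) and the compatibility factor satisfies \(\kappa_T \gtrsim 1/\sqrt{|T|}\). The number of edges of \(S_n\) is \(m = n-1\), so in particular \(\log(em/\delta) \le \log(en/\delta)\) up to an absolute constant.

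With these estimates in hand, I first verify the prescribed value of the regularization parameter. Theorem~\ref{thm:Lasso-rates} requires \(\lambda = (\sigma\rho/n)\sqrt{2\log(em/\delta)}\), which, after using \(\rho \le 1\) and \(m \le n\), is of the form \(c\sigma\sqrt{\log(en/\delta)}/n\) claimed in the corollary. Substituting the same estimates into the additional error term of \eqref{eq:u} replaces \(|T|\rho^2/\kappa_T^2\) by a quantity of order \(|T|\cdot 1\cdot |T| = |T|^2\), which gives exactly the sharp oracle inequality \eqref{eq:oistar}.

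The second displayed bound (the MSE estimate) will then follow by choosing \(\bar\theta = \theta^*\) and optimizing over \(T\). Setting \(T = \emptyset\) kills the sparsity term and the inequality reduces to the \(\ell_1\) rate \(4\lambda \|D_\star \theta^*\|_1 \lesssim \sigma\|D_\star \theta^*\|_1 \sqrt{\log(en/\delta)}/n\), which is then absorbed into the looser \(\log(en/\delta)/n\) scaling claimed. Setting \(T = \supp(D_\star \theta^*)\) kills instead the \(\ell_1\) term and yields the \(\ell_0\) rate \(\sigma^2\|D_\star \theta^*\|_0^2 \log(en/\delta)/n\). Taking the minimum of the two bounds produces the stated MSE estimate.

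The proof is essentially a plug-and-play calculation and presents no real obstacle, since all nontrivial work has already been carried out in Theorem~\ref{thm:Lasso-rates} and Proposition~\ref{prp:star-graph}. The only mildly subtle point is the bookkeeping of the sharp dependence on \(|T|\): because \(\kappa_T\) scales like \(1/\sqrt{|T|}\) rather than as a constant, the residual term grows like \(|T|^2\) rather than the \(|T|\) seen for bounded-degree graphs such as the hypercube. This scaling is what forces the \(\|D_\star \theta^*\|_0^2\) (rather than \(\|D_\star \theta^*\|_0\)) in the \(\ell_0\) rate, and it should be explicitly flagged in the writeup.
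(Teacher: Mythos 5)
Your proposal is correct and follows exactly the route the paper intends: the corollary is stated as an immediate consequence of Theorem~\ref{thm:Lasso-rates} combined with the estimates $\rho\le 1$ and $\kappa_T\gtrsim 1/\sqrt{|T|}$ from Proposition~\ref{prp:star-graph}, and your bookkeeping (including the observation that $\kappa_T^{-2}\lesssim|T|$ is what produces the $|T|^2$, hence the $\|D_\star\theta^*\|_0^2$, scaling) matches the paper's derivation.
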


The star graph leads to a useful regularization when most of the outer nodes take the same value as the central node and only a few outer nodes take a different one. Specifically, let $1$ denote the central vertex and consider the set $\Theta^\star(s) \subset \R_n$ defined for any integer $s \in [n-1]$ by
$$
\Theta^\star(s)=\Big\{ \theta \in \R^n\,:\, \sum_{j=2}^n \1(\theta_j \neq \theta_1)\le s\Big\}\,.
$$
Then it holds that
$$
\sup_{\theta^* \in \Theta^\star(s)}\frac{1}{n} \| \hat\theta - \theta^\ast \|^2 \lesssim\frac{\sigma^2s^2}{n}\log(en/\delta)
$$
	with probability at least \( 1 - 2 \delta \).
\subsection{Random graphs} 
\label{sub:random-graphs}

In the case of random graphs, it was noted in \cite{WanShaSmo15} that one can bound \( \rho \) if one has bounds on the second smallest eigenvalue of the Laplacian of the graph.
We can slightly improve on their estimation of \( \rho \).

\begin{proposition}
	\label{prp:deloc-gap-graphs}
	Suppose \( G \) is a connected graph whose Laplacian admits an eigenvalue decomposition \( D^\top D = V \Lambda V^\top \), with \( \Lambda = \operatorname{diag}(\lambda_1, \dots, \lambda_n) \), \( V = [ v_1, \dots, v_n]  \), \( 0 = \lambda_1 \leq \lambda_2 \leq \dots \leq \lambda_n \).

	If the graph Laplacian has a \emph{spectral gap}, \ie there exists a constant \( c_1 > 0 \) such that \( \lambda_2 \geq c_1 \),
	then \( \rho \le \sqrt{2}/c_1\).
\end{proposition}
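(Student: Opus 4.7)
The plan is to directly compute $\|s_j\|_2^2$ by expanding the pseudoinverse in the eigenbasis of the Laplacian and using the spectral gap hypothesis to control the coefficients.

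First I would use the standard identity $D^\dagger = (D^\top D)^\dagger D^\top$, which holds because $D^\top D$ and $D$ have the same null space (namely $\linspan\{\mathbf{1}\}$, since $G$ is connected). Writing $d_j \in \R^n$ for the $j$-th row of $D$, viewed as a column vector, this gives $s_j = (D^\top D)^\dagger d_j$, so that
\begin{equation}
\|s_j\|_2^2 \;=\; d_j^\top \bigl((D^\top D)^\dagger\bigr)^2 d_j \;=\; \sum_{k=2}^{n} \lambda_k^{-2}\, \langle v_k, d_j \rangle^2,
\end{equation}
where the sum starts at $k=2$ because $\lambda_1 = 0$ is excluded from the pseudoinverse.

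Next I would exploit two facts. First, because each row of $D$ corresponds to an edge with exactly one $+1$ and one $-1$ entry, its row sum vanishes, so $\langle \mathbf{1}, d_j \rangle = 0$; since $G$ is connected, $v_1$ is proportional to $\mathbf{1}$, and therefore $\langle v_1, d_j\rangle = 0$. Thus the Parseval identity yields $\sum_{k=2}^n \langle v_k, d_j\rangle^2 = \|d_j\|_2^2 = 2$. Second, the spectral gap hypothesis $\lambda_2 \geq c_1$ combined with the monotonicity $\lambda_k \geq \lambda_2$ for $k \geq 2$ gives $\lambda_k^{-2} \leq c_1^{-2}$.

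Combining these two observations,
\begin{equation}
\|s_j\|_2^2 \;\leq\; \frac{1}{c_1^2}\sum_{k=2}^n \langle v_k, d_j\rangle^2 \;=\; \frac{2}{c_1^2},
\end{equation}
and taking the maximum over $j \in [m]$ yields $\rho \leq \sqrt{2}/c_1$. There is no real obstacle here; the only point requiring a moment of care is the justification of $\langle v_1, d_j \rangle = 0$, which follows from the connectedness of $G$ and the column-sum-zero property of incidence matrices.
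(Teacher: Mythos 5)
Your proposal is correct and follows essentially the same route as the paper: both expand $s_j = (D^\top D)^\dagger d_j$ in the eigenbasis, bound $\lambda_k^{-2} \le \lambda_2^{-2} \le c_1^{-2}$ for $k \ge 2$, and use $\sum_{k\ge 2}\langle v_k, d_j\rangle^2 \le \|d_j\|_2^2 = 2$. Your extra remark justifying $\langle v_1, d_j\rangle = 0$ via connectedness and the zero column sums of $D^\top$ is a welcome clarification but not a departure from the paper's argument.
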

\begin{proof}
	Writing \( D^\dag = [s_1, \dots, s_m] = (D^\top D)^\dag D^\top \), \( D^\top = [d_1, \dots, d_m] \), we note that the columns \( d_j \) have 2-norm \( \|d_j\|_2 = \sqrt{2} \) because \( D \) is the incidence matrix of a graph, so
  \begin{equation*}
  	\label{eq:ax}
		\| s_j \|_2^2
		= \sum_{k = 2}^{n} \frac{1}{\lambda_k^2} \langle v_{k} , d_j \rangle^2 \leq \frac{1}{\lambda_2^2} \sum_{k = 2}^{n} \langle v_k , d_j \rangle^2 \leq \frac{1}{\lambda_2^2} \left\| d_j \right\|_2^2 \leq \frac{2}{c_1^2}.
  \end{equation*}
\end{proof}

We can combine this with bounds on the spectral gap of two families of random graphs, Erd\H{o}s-R\'enyi random graphs \( G(n,p) \) and random regular graphs.
Both of these models exhibit a spectral gap of the order \( O(d) \) in a regime where the degree increases logarithmically with the number of vertices, see \cite{KolOstVon14} and \cite{Fri04}, respectively.
Together with the bound on \( \kappa \) from Lemma \ref{lem:a}, we get the following rate.

\begin{corollary}
\label{cor:er}
Fix $\delta \in (0,1)$ and let \( D \) denote the incidence matrix of either a random $d$-regular graph with $d_n=d_0(\log n)^{\beta}$ or Erd\H{o}s-R\'enyi random graph with \( G(n,p) \) with \( p_n=d_n/n \) for some constant $\beta>0$ and \( d_0 > 1 \). Then there exist constants \( C, c > 0 \) such that the TV denoiser $\hat \theta$ defined in  \eqref{eq:y} with \( \lambda = c \sigma \sqrt{\log (e d_n n/\delta)}/(d_n n) \) satisfies
	\begin{align}
		\frac{1}{n} \| \hat\theta - \theta^\ast \|^2
		\leq {} & \inf_{\substack{\bar{\theta} \in \R^n\\ T \subseteq [m] }} \left\{ \frac{1}{n} \| \bar{\theta} - \theta^\ast \|^2 + 4 \lambda \| (D \bar{\theta})_{T^c} \|_1 \right\}
		+ \frac{C \sigma^2}{d_n n} \left(|T| \log (e n/\delta) + \log (e/\delta)\right),
	\end{align}	
	with probability at least \( 1 - 2 \delta \) over \( \varepsilon \) and with high probability over the realizations of the graph. In particular, it yields
	$$
	\mse(\hat \theta) \lesssim \frac{\sigma^2\|D\theta^*\|_0}{d_n n}\log (en/\delta)\,,
	$$
	where $\|D\theta^*\|_0$ denotes the number of nonzero components of $D\theta^*$.
\end{corollary}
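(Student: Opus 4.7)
The plan is to verify, with high probability over the realization of the random graph, the two structural inputs required by Theorem \ref{thm:Lasso-rates}: a bound on the maximum degree (feeding into Lemma \ref{lem:a}) and a lower bound on the spectral gap of the unnormalized Laplacian (feeding into Proposition \ref{prp:deloc-gap-graphs}). For the random $d_n$-regular graph, every vertex has degree exactly $d_n$ deterministically, and Friedman's theorem \cite{Fri04} asserts that with high probability the second-largest eigenvalue of the adjacency matrix is at most $2\sqrt{d_n - 1} + o(1)$, so the Laplacian $L = d_n I - A$ has $\lambda_2(L) \ge d_n - 2\sqrt{d_n - 1} - o(1) \gtrsim d_n$ as soon as $d_n$ grows at least polylogarithmically. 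For the Erd\H{o}s-R\'enyi graph $G(n, p_n)$ with $p_n = d_n/n$ and $d_n \ge d_0 (\log n)^\beta$, standard concentration (\eg the bounds of Kolla, Oveis Gharan and Vondr\'ak \cite{KolOstVon14}, or Chung-Lu-Vu) gives that with high probability every vertex has degree in $[d_n/2, 2 d_n]$ and $\lambda_2(L) \gtrsim d_n$. In both cases, $m = |E| \lesssim d_n n$.

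Given those two events, Proposition \ref{prp:deloc-gap-graphs} yields $\rho \le \sqrt{2}/\lambda_2 \lesssim 1/d_n$, and Lemma \ref{lem:a} gives $\kappa_T \ge (2\sqrt{d_{\max}})^{-1} \gtrsim 1/\sqrt{d_n}$ for every nonempty $T$ (the $\sqrt{|T|}$ branch of the lemma is never worse). Plugging these into the choice of $\lambda$ from Theorem \ref{thm:Lasso-rates},
\begin{equation*}
\lambda = \frac{\sigma \rho}{n} \sqrt{2 \log(em/\delta)} \lesssim \frac{\sigma}{d_n n} \sqrt{\log(e d_n n/\delta)},
\end{equation*}
which matches the choice stated in the corollary up to absolute constants.

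Substituting $\rho^2 \lesssim 1/d_n^2$ and $\kappa_T^{-2} \lesssim d_n$ into the residual term of \eqref{eq:u} produces
\begin{equation*}
\frac{8 \sigma^2}{n}\!\left(\frac{|T| \rho^2}{\kappa_T^2} \log(em/\delta) + \log(e/\delta)\right)
\lesssim \frac{\sigma^2}{d_n n}\!\left(|T| \log(e n/\delta) + \log(e/\delta)\right),
\end{equation*}
which is exactly the oracle inequality claimed. The $\ell_0$ consequence follows as in Corollaries \ref{cor:oracle-tv-2d}--\ref{cor:oracle-star}: pick $T = \operatorname{supp}(D \bar\theta)$ with $\bar\theta = \theta^\ast$ so that the $\|(D\bar\theta)_{T^c}\|_1$ term vanishes and $|T| = \|D\theta^\ast\|_0$; the approximation term $\|\bar\theta - \theta^\ast\|^2/n$ is zero and the statement follows after absorbing logarithmic factors.

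The main obstacle is obtaining the spectral gap and degree concentration for $G(n, p_n)$ in the regime $p_n = d_n/n$ with $d_n$ only slightly larger than $\log n$: off-the-shelf results for dense Erd\H{o}s-R\'enyi graphs must be replaced by the sparser regime of \cite{KolOstVon14} (or a suitable trimming argument if one insists on matrix Chernoff bounds, since isolated high-degree vertices can dominate the spectrum without trimming). For the $d_n$-regular model, invoking Friedman's theorem is cleaner, but one still has to check that its quantitative form accommodates a growing degree $d_n$, which is the case but needs to be stated carefully.
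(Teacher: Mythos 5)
Your proposal is correct and follows essentially the same route the paper takes: the paper derives this corollary by combining the spectral-gap bound $\lambda_2 \gtrsim d_n$ from \cite{KolOstVon14} and \cite{Fri04} with Proposition \ref{prp:deloc-gap-graphs} to get $\rho \lesssim 1/d_n$, Lemma \ref{lem:a} to get $\kappa_T \gtrsim 1/\sqrt{d_n}$, and Theorem \ref{thm:Lasso-rates}. Your write-up is in fact more careful than the paper's (which states no explicit proof), particularly regarding degree concentration and the sparse-regime caveats for $G(n,p_n)$.
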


In the context of TV denoising, Erd\H{o}s-R\'enyi random graphs with expected degree \( d_n \) can be considered a sparsification of the complete graph considered in Section \ref{sub:complete_graph}.
In the same model considered in Subsection~\ref{sub:complete_graph} of \( k \) islands with \( l \) nodes each, we would get a performance rate of \( kl/n \), the same as before.
On the other hand, the underlying graph is much sparser, so we could possibly get a computational benefit from choosing it instead of the complete graph. The behavior of random graphs is compared to that of the complete graph in Section~\ref{sub:numerical_experiments} in the appendix. They indicate that the computational saving occur at a negligible statistical cost.

\subsection{Power graph of the cycle} 
\label{sub:power_graph_of_the_cycle}

In practice, nearest neighbor graphs often arise in the context of spatial regularization. The grid is one such example and as an extension, we consider the $k$th power of the cycle graph as a toy example to study the effect of increasing the connectivity of the graph.

Define the cycle graph \( C_n \) to be the graph on \( n \) vertices with \( i \sim j \) if and only if \( i - j \equiv \pm 1 \bmod n \) and its \( k \)th power graph \( C_n^k \) as the graph with the same vertex set but with \( i \sim j \) if and only if there is a path of length at most \( k \) from \( i \) to \( j \) in \( C_n \).

\begin{proposition}
	\label{prp:power-graph}
	For \( G = C_n^k \) where \( k \leq n/2 \),  \( \rho \lesssim \sqrt{n}/k^3 + 1 \) and \( \kappa \gtrsim 1/\sqrt{k} \).
\end{proposition}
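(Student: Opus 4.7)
The plan is to treat the two quantities separately. The compatibility-factor bound $\kappa \gtrsim 1/\sqrt{k}$ is immediate from Lemma~\ref{lem:a}: every vertex of $C_n^k$ has degree exactly $2k$, so $\kappa \geq 1/(2\sqrt{2k})$, which is essentially the whole story for $\kappa$.

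The substantial content is the bound on $\rho$, and the main idea is to exploit that $C_n^k$ is a Cayley graph of $\mathbb{Z}/n\mathbb{Z}$, so its unnormalized Laplacian $L = D^\top D$ is circulant. I would diagonalize it via the discrete Fourier basis $(v_l)_a = e^{2\pi i l a/n}/\sqrt{n}$, whose eigenvalues are explicitly
\[
\lambda_l \;=\; 2k - 2\sum_{j'=1}^{k}\cos\!\big(2\pi j' l/n\big) \;=\; 4 f(l), \qquad f(l) := \sum_{j'=1}^{k}\sin^2\!\big(\pi j' l/n\big).
\]
Each edge $j$ has $d_j = \pm(e_a - e_b)$ for some $a,b$ with cycle distance $p = p(j) \in \{1,\dots,k\}$, and, using translation invariance to set $b=0$, $a=p$, the identity $\|s_j\|_2^2 = \sum_{l\neq 0} |\langle v_l, d_j\rangle|^2/\lambda_l^2$ gives
\[
\|s_j\|_2^2 \;=\; \frac{1}{4n}\sum_{l=1}^{n-1}\frac{\sin^2(\pi p l/n)}{f(l)^2}.
\]

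The key trick is to observe that, since $p \leq k$, the term $\sin^2(\pi p l/n)$ is one of the summands defining $f(l)$, so $\sin^2(\pi p l/n) \leq f(l)$, and we obtain the edge-uniform bound $\|s_j\|_2^2 \leq (4n)^{-1}\sum_l 1/f(l)$. I would then estimate $\sum_l 1/f(l)$ by splitting the frequency range. For low frequencies $l \in [1, n/(2k)]$ (and symmetrically near $n$), using $\sin x \geq 2x/\pi$ on $[0, \pi/2]$ gives $f(l) \geq (4l^2/n^2)\sum_{j'=1}^k (j')^2 \gtrsim k^3 l^2/n^2$, and this range contributes $\lesssim n^2/k^3$ to the sum. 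For intermediate/high frequencies, using the closed-form expression
\[
f(l) \;=\; \frac{k}{2} - \frac{\sin(k\pi l/n)\cos((k+1)\pi l/n)}{2\sin(\pi l/n)}
\]
together with $\sin(\pi l/n) \gtrsim 1/k$ on that range yields $f(l) \gtrsim k/4$, contributing $\lesssim n/k$ to the sum. Combining the two regimes gives $\|s_j\|_2^2 \lesssim n/k^3 + 1/k$, whence $\rho \lesssim \sqrt{n/k^3} + 1$, matching the statement.

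I expect the main obstacle to lie in the crossover regime $l \sim n/k$: the Taylor bound from the low-frequency argument has just saturated, while the oscillatory correction in the closed form for $f(l)$ is still of the same order $k/2$ as the main term. The argument must verify uniformly in this window that $\sin(\pi l/n) \gtrsim 1/k$, forcing the oscillatory contribution to be at most $k/4$ in absolute value and preserving $f(l) \gtrsim k/4$; once that is in hand, the two regime estimates splice together cleanly.
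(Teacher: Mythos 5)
Your $\kappa$ bound and your spectral setup for $\rho$ (circulant Laplacian, discrete Fourier eigenbasis, the identity $\|s_j\|_2^2=\frac{1}{4n}\sum_{l\neq 0}\sin^2(\pi p l/n)/f(l)^2$ with $\lambda_l=4f(l)$) coincide with the paper's. The genuine gap is the ``key trick'' $\sin^2(\pi p l/n)\le f(l)$: it is too lossy precisely where the sum is largest. In the low-frequency range $l\le n/(2k)$ one has $\sin^2(\pi p l/n)\asymp p^2l^2/n^2$ while $f(l)\asymp k^3l^2/n^2$, so the ratio you are discarding, $\sin^2(\pi p l/n)/f(l)$, is of order $p^2/k^3\le 1/k$, not of order $1$. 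Carried through honestly, your estimate gives $\|s_j\|_2^2\lesssim n/k^3+1/k$, hence $\rho\lesssim\sqrt{n}/k^{3/2}+1$; your closing identification of $\sqrt{n/k^3}$ with $\sqrt{n}/k^3$ is an algebra slip (the two differ by a factor $k^{3/2}$), so the proposition as stated is not established. To reach the claimed exponent one must keep both powers of $f(l)$ in the denominator and use the quadratic smallness of the numerator at low frequencies: for $l\lesssim n/k$ the summand is $\lesssim (p^2l^2/n^2)\cdot\bigl(n^2/(k^3l^2)\bigr)^2=p^2n^2/(k^6l^2)$, which sums over $l$ to $O(p^2n^2/k^6)$ and, after the $1/(4n)$ prefactor, yields $\rho^2\lesssim p^2 n/k^6+1$. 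This is exactly the paper's route, implemented there via the Lipschitz bound $|\langle v_m,d_j\rangle|^2\le 4\pi^2m^2/n^3$.

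A caveat you should be aware of: the paper's display for $\langle v_m,d_j\rangle$ only treats nearest-neighbour chords ($p=1$, i.e.\ $d_j=\pm(e_{j+1}-e_j)$), whereas $C_n^k$ contains chords of every length $p\le k$; for those the corrected low-frequency estimate gives $p^2n/k^6$, which at $p=k$ is $n/k^4$, and the $l=1$ term shows this order is actually attained. So uniformly over edges the best available bound is $\rho\lesssim\sqrt{n}\,p/k^3+1\le\sqrt{n}/k^2+1$, and no sharpening of your $1/f(l)$ device can deliver $\sqrt{n}/k^3$ for the long chords. Your handling of the crossover window ($f(l)\gtrsim k$ for $l\gtrsim n/k$ via the closed form for the Dirichlet-type sum) is a fixable technicality and parallels the paper's ``$2-2\cos x\ge 0.1$'' high-frequency bound; it is not where the problem lies.
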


We delay the proof to the Appendix, Subsection \ref{sec:proof-power-graph}.

\begin{corollary}
\label{cor:power-graph}
Fix $\delta \in (0,1)$ and let \( D \) denote the incidence matrix of \( C_n^k \).
Then there exist constants \( C, c > 0 \) such that the TV denoiser $\hat \theta$ defined in  \eqref{eq:y} with \( \lambda = c \sigma \sqrt{\log (e n/\delta)}/(\sqrt{n} k^3 \wedge n) \) satisfies
	\begin{align}
		\frac{1}{n} \| \hat\theta - \theta^\ast \|^2
		\leq {} & \inf_{\substack{\bar{\theta} \in \R^n\\ T \subseteq [m] }} \left\{ \frac{1}{n} \| \bar{\theta} - \theta^\ast \|^2 + 4 \lambda \| (D \bar{\theta})_{T^c} \|_1 \right\}\\
		&+ C \sigma^2 \left(\frac{1}{k^5} \vee \frac{k}{n} \right)\left(|T| \log (e n/\delta) + \log (e/\delta)\right),
		\label{eq:de}
	\end{align}	
	with probability at least \( 1 - 2 \delta \).
	In particular, it yields
	$$
	\mse(\hat \theta) \lesssim \sigma^2 \| D \theta^\ast \|_0 \left(\frac{1}{k^5} \vee \frac{k}{n} \right) \log (en/\delta)\,,
	$$
	where $\|D\theta^*\|_0$ denotes the number of nonzero components of $D\theta^*$.
\end{corollary}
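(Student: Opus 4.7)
The plan is to combine Theorem~\ref{thm:Lasso-rates} with the graph-specific estimates provided by Proposition~\ref{prp:power-graph}. Specifically, for $G = C_n^k$, the incidence matrix $D$ has $m$ rows where $m \asymp kn$ (each vertex contributes $2k$ edges, counted twice), so $\log(em/\delta) \lesssim \log(en/\delta) + \log k \lesssim \log(en/\delta)$ for the range $k \le n/2$. Plugging $\rho \lesssim \sqrt{n}/k^3 + 1 \asymp (\sqrt n /k^3) \vee 1$ into the regularization choice $\lambda = \sigma\rho\sqrt{2\log(em/\delta)}/n$ from Theorem~\ref{thm:Lasso-rates} yields
\[
\lambda \lesssim \sigma \sqrt{\log(en/\delta)} \cdot \frac{(\sqrt n/k^3) \vee 1}{n} = \sigma \sqrt{\log(en/\delta)} \cdot \frac{1}{(\sqrt n \, k^3) \wedge n},
\]
which matches the stated choice of $\lambda$ up to the absolute constant $c$.

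Next, I would handle the residual term in Theorem~\ref{thm:Lasso-rates}. The compatibility bound $\kappa_T \gtrsim 1/\sqrt{k}$ gives $\kappa_T^{-2} \lesssim k$, and combining with $\rho^2 \lesssim n/k^6 + 1$ produces
\[
\frac{\rho^2}{\kappa_T^2} \lesssim k\left(\frac{n}{k^6} + 1\right) = \frac{n}{k^5} + k.
\]
Multiplying by $|T|$ and dividing by $n$ (as required by the $\frac{8\sigma^2}{n}$ prefactor in~\eqref{eq:u}) then yields the coefficient $|T|(1/k^5 \vee k/n)$ appearing in~\eqref{eq:de}, after absorbing the factor of $2$ arising from the $a+b \le 2(a\vee b)$ bound into the hidden constant $C$. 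Together with $\log(em/\delta) \lesssim \log(en/\delta)$ this is precisely~\eqref{eq:de}.

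Finally, the MSE bound follows by specializing the oracle inequality~\eqref{eq:de}: take $\bar\theta = \theta^\ast$ and $T = \operatorname{supp}(D\theta^\ast)$, so that $\|\bar\theta - \theta^\ast\|^2 = 0$, $(D\bar\theta)_{T^c} = 0$, and $|T| = \|D\theta^\ast\|_0$. The second line of~\eqref{eq:de} dominates and delivers the claimed rate, absorbing the additive $\log(e/\delta)$ into the leading term at the cost of an absolute constant.

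There is no genuine obstacle since the hard spectral work has been done in Proposition~\ref{prp:power-graph}; the only mild care required is the bookkeeping between $\log m$ and $\log n$ (harmless because $m \lesssim n^2$) and the correct identification of the $\sqrt{n} k^3 \wedge n$ scaling in the denominator of $\lambda$, which originates from writing $\rho \lesssim (\sqrt n/k^3) \vee 1$ rather than as a sum.
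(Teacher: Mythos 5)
Your proposal is correct and follows exactly the route the paper intends: the corollary is an immediate consequence of Theorem~\ref{thm:Lasso-rates} combined with the bounds $\rho \lesssim \sqrt{n}/k^3 + 1$ and $\kappa \gtrsim 1/\sqrt{k}$ from Proposition~\ref{prp:power-graph}, with the bookkeeping $\log(em/\delta)\lesssim\log(en/\delta)$ since $m\asymp kn\le n^2$. Your identification of $\rho/n\lesssim 1/(\sqrt{n}\,k^3\wedge n)$ for the tuning parameter and of $\rho^2/(n\kappa_T^2)\lesssim 1/k^5\vee k/n$ for the residual term matches the stated constants exactly.
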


\section{Applications to nonparametric regression} 
\label{sec:applications}

The rate for the grid obtained in Corollaries~\ref{cor:oracle-tv-2d} and~\ref{cor:oracle-tv} can be used to derive rates for nonparametric function estimation in dimension \( d \geq 2 \).
This allows us to  generalize the results \cite[Proposition~6]{DalHebLed14} for the adaptive estimation of H\"older functions and \cite{chatterjee_matrix_2015, Bel15} for the estimation of bi-isotonic matrices.
Moreover, we can also generalize to piecewise H\"older functions, called ``cartoon images".

In the first two subsections, we are interested in real valued functions on $[0,1]^d$.  To relate function estimation to our problem, consider the vectors \( \theta \) to be a discretization of a continuous signal \( f \colon [0,1]^d \to \R \) on the regular grid \( \mathcal{X}_N^d := \{x_{ \bm{i}} := \bm{i}/N : \bm{i} \in [N]^d \} \), so \( \theta_{ \bm{i}} = f( x_{\bm{i}}) = f(i_1/N, \dots, i_d/N) \), \( \bm{i} \in [N]^d \). Furthermore, for any function, $f\colon [0,1]^d \to \R$, define the pseudo-norm $\|f\|_n$ by
$$
\|f\|_n^2=\frac{1}{n}\sum_{\bm{i} \in [N]^d}f(x_{ \bm{i}})^2\,.
$$

\subsection{H\"older functions} 
\label{sub:holder_functions}

In \cite[Proposition 7]{DalHebLed14}, the authors showed that the TV denoiser in one dimension achieves the minimax rate \( n^{-\frac{2 \alpha}{2 \alpha+1}} \) for estimating H\"older continuous functions (with parameter $\alpha \in (0,1]$) on a bounded interval, up to logarithmic factors.
Here, we show that the TV denoiser achieves a rate of \( n^{-\frac{2 \alpha}{d \alpha + d}} \), again up to logarithmic factors, which means it is near minimax for two-dimensional observations as well. Unlike the one-dimensional result of~\cite{DalHebLed14}, the TV denoiser in two dimensions is adaptive to the unknown parameter $\alpha$.

\begin{definition}[H\"older function]
\label{def:holder-fcns}
	For \( \alpha \in (0,1] \), \( L > 0 \), we say that a function \( f \colon [0,1]^d \to \R \)	is \emph{\( (\alpha, L) \)-H\"older continuous} if it satisfies
	\begin{equation}
		\label{eq:bv}
		| f(y) - f(x) | \leq L \| x - y \|_{\infty}^\alpha \quad \text{for all } x, y \in [0,1]^d.
	\end{equation}
	For such an \( f \), we write \( f \in H(\alpha, L) \).
\end{definition}

Note that we picked the \( \ell_{\infty} \)-norm for convenience here.
By the equivalence of norms in finite dimensions, the \( \ell_2 \)-norm would yield the same definition up to a dimension-dependent constant.
Moreover, for samples of a H\"older continuous function on a grid, Definition \ref{def:holder-fcns} implies
\begin{equation}
	\label{eq:au}
	| \theta_{ \bm{i}} - \theta_{ \bm{j}} | \leq L N^{- \alpha} \| \bm{i} - \bm{j}\|_\infty^\alpha,
\end{equation}
so we can directly work with the \( \ell_\infty \)-distance between the indices.

\begin{proposition}
	\label{prp:holder-fcns}
	Fix \( \delta \in (0,1) \), \( d \geq 2 \), \( L > 0 \), $N \ge 1$, $n=N^d$ and \( \alpha \in (0,1] \) and let \( y \) be sampled according to the Gaussian sequence model \eqref{eq:al}, where \(\theta^*_{ \bm{i}} =  f^*(x_{\bm{i}}) \), \( \bm{i} \in [N]^d \) for some unknown function $f^*:[0,1]^d \to \R$.
	There exist positive constants $c$, $C$ and \( C' = C'(\sigma, L, d) \) such that the following holds.
Let $\hat \theta$ be the TV denoiser defined in~\eqref{eq:y} for the \( N^d \) grid with incidence matrix \( D_d \) and tuning parameter \( \lambda = c \sigma \sqrt{r_d(n) \log(e n/\delta)}/n, c>0 \) where $r_2(n)=\log n$ and $r_d (n)=1$ for $d \ge 3$. Moreover, let $\hat f:[0,1]^d \to \R$ be defined by $\hat f(x_{\bm{i}})=\hat \theta_{\bm{i}}$ for $\bm{i} \in [N]^d$ and arbitrarily elsewhere on the unit hypercube $[0,1]^d$.

Further, assume that \( N \geq C'(L, \sigma, d) \sqrt{r_d(n) \log(en/\delta)} \).
Then,
\begin{align}
		\label{eq:ab}
		 \| \widehat{f} - f^\ast \|_n^2
		\leq {} & \inf_{ \bar{f} \in H(\alpha, L)} \left\{ \| \bar{f} - f^\ast \|^2_n \right\}
		+ C \frac{\big(L^2(\sigma\sqrt{r_d(n) \log(e n/\delta)})^{2\alpha} \big)^{\frac{1}{\alpha+1}}}{n^{\frac{2 \alpha}{d \alpha + d}}} + C \frac{\sigma^2}{n}\log (e/ \delta)\,,
	\end{align}
	with probability at least \( 1 - 2 \delta \)\,. In particular, for $d=2$, it yields the near optimal rate
	\begin{align}
		\| \widehat{f} - f^\ast \|_n^2
		\leq {} & \inf_{ \bar{f} \in H(\alpha, L)} \left\{ \| \bar{f} - f^\ast \|^2_n \right\}
		+ C \big(L^2(\sigma \log(e n/\delta))^{2\alpha} \big)^{\frac{1}{\alpha+1}}n^{-\frac{2 \alpha}{2 \alpha + 2}}
		+ C \frac{\sigma^2}{n}\log (e/ \delta)\,.
	\end{align}
\end{proposition}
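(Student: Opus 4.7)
The plan is to invoke the sharp oracle inequality of Corollary~\ref{cor:oracle-tv-2d} (for \( d = 2 \)) or Corollary~\ref{cor:oracle-tv} (for \( d \geq 3 \)) with the choice \( T = \emptyset \), which collapses its right-hand side to
\[
\tfrac{1}{n}\|\bar\theta - \theta^\ast\|^2 + 4\lambda\|D_d\bar\theta\|_1 + \tfrac{C\sigma^2}{n}\log(e/\delta),
\]
with \( \lambda \asymp \sigma\sqrt{r_d(n)\log(en/\delta)}/n \). For every \( \bar f \in H(\alpha, L) \), I need to exhibit a single candidate vector \( \bar\theta \in \R^n \) that is simultaneously close to \( \theta^\ast \) and has small total variation, then optimize over the construction parameter.

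Fix \( \bar f \in H(\alpha, L) \) and an integer scale \( k \in [N] \) to be chosen. Partition the grid \( [N]^d \) into \( (N/k)^d \) cubical blocks of side \( k \) (rounding up when \( k \) does not divide \( N \); this only changes constants) and set \( \bar\theta_{\bm i} \) equal to \( \bar f \) evaluated at the center of the block containing \( \bm i \). Two bounds follow immediately. First, because \( \bar f \) is \( (\alpha, L) \)-H\"older in the \( \ell_\infty \)-metric and every grid point lies within \( \ell_\infty \)-distance \( k/N \) of its block center, \( |\bar\theta_{\bm i} - \bar f(x_{\bm i})| \leq L(k/N)^\alpha \), whence \( \tfrac{1}{n}\|\bar\theta - \theta^\ast\|^2 \leq 2\|\bar f - f^\ast\|_n^2 + 2L^2(k/N)^{2\alpha} \). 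Second, \( D_d\bar\theta \) is supported only on edges that cross a block boundary---of which there are at most \( d n/k \)---and each such entry is bounded by \( L(k/N)^\alpha \), since adjacent block centers sit at \( \ell_\infty \)-distance \( k/N \) in \( [0,1]^d \); hence \( \|D_d\bar\theta\|_1 \lesssim d L n k^{\alpha-1}/N^\alpha \).

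Substituting both bounds into the oracle inequality and balancing the approximation term \( L^2(k/N)^{2\alpha} \) against the weighted TV term \( 4\lambda \cdot d L n k^{\alpha-1}/N^\alpha \asymp \sigma L\sqrt{r_d(n)\log(en/\delta)}\,k^{\alpha-1}/N^\alpha \) selects
\[
k \asymp \bigl(\sigma\sqrt{r_d(n)\log(en/\delta)}\,N^\alpha/L\bigr)^{1/(\alpha+1)},
\]
and substituting this back, together with \( n = N^d \), produces the claimed rate \( \bigl(L^2(\sigma\sqrt{r_d(n)\log(en/\delta)})^{2\alpha}\bigr)^{1/(\alpha+1)} n^{-2\alpha/(d\alpha+d)} \). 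The hypothesis \( N \geq C'(L,\sigma,d)\sqrt{r_d(n)\log(en/\delta)} \) is precisely what guarantees the chosen \( k \) satisfies \( 1 \leq k \leq N \), so the partition is legitimate; taking the infimum over \( \bar f \in H(\alpha, L) \) finishes the argument.

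The main conceptual obstacle is recognizing that the \( \ell_1 \) branch \( T = \emptyset \) of the oracle inequality, and not the sparsity branch \( T = \supp(D_d \bar\theta) \), is what yields the optimal H\"older rate: the approximating \( \bar\theta \) has many jumps (of order \( n/k \)) each of small size (of order \( (k/N)^\alpha \)), so measuring its complexity via \( \|D_d\bar\theta\|_1 \) rather than \( \|D_d\bar\theta\|_0 \) is essential for the exponent \( \alpha/(\alpha+1) \). Secondary nuisances---rounding when \( k \nmid N \), and extending \( \hat f \) from the grid to all of \( [0,1]^d \)---are absorbed into constants or are costless because \( \|\cdot\|_n \) probes only grid values.
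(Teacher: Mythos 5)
Your construction and all the quantitative steps coincide with the paper's proof in Subsection~\ref{sec:proof-holder-fcns}: the $T=\emptyset$ (slow-rate) branch of Corollaries~\ref{cor:oracle-tv-2d} and~\ref{cor:oracle-tv}, the block-constant approximation at scale $k$, the two estimates $L^2(k/N)^{2\alpha}$ for the discretization error and $\lesssim dLnk^{\alpha-1}/N^{\alpha}$ for the total variation, the balancing choice of $k$, and the role of the hypothesis on $N$ in guaranteeing $1\le k\le N$ are all exactly the paper's. The one genuine discrepancy is in how you compare your candidate $\bar\theta$ to an arbitrary competitor $\bar f\in H(\alpha,L)$: the bound $\frac{1}{n}\|\bar\theta-\theta^\ast\|^2\le 2\|\bar f-f^\ast\|_n^2+2L^2(k/N)^{2\alpha}$ propagates into a final inequality whose leading term is $2\inf_{\bar f\in H(\alpha,L)}\|\bar f-f^\ast\|_n^2$, whereas \eqref{eq:ab} asserts a sharp oracle inequality with constant $1$. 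The crude triangle inequality cannot deliver constant $1$: at best Young's inequality gives $(1+\epsilon)$ at the price of inflating the discretization term by $1/\epsilon$, because the cross term $\langle\bar\theta-\bar f,\bar f-\theta^\ast\rangle$ is left uncontrolled.

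The paper's fix is to never discretize an arbitrary $\bar f$. It introduces the closed convex set $\Theta(\alpha,L)$ of grid vectors satisfying \eqref{eq:au}, lets $f_{\mathrm{proj}}$ be the Euclidean projection of $\theta^\ast$ onto it, builds the block-constant candidate from $f_{\mathrm{proj}}$ rather than from $\bar f$, and combines the inequality $\|\bar\theta-\theta^\ast\|^2\le\|f_{\mathrm{proj}}-\theta^\ast\|^2+\|\bar\theta-f_{\mathrm{proj}}\|^2$ with the observation that $\|f_{\mathrm{proj}}-\theta^\ast\|^2\le\inf_{\bar f\in H(\alpha,L)}\|\bar f-\theta^\ast\|^2$, since every $\bar f\in H(\alpha,L)$ restricted to the grid lies in $\Theta(\alpha,L)$. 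That projection step is the missing idea. If a leading constant of $2$ were acceptable — and it is immaterial in the well-specified case $f^\ast\in H(\alpha,L)$, where the infimum vanishes — your argument would be complete as written; to prove the statement as stated, you need to route the approximation error through $f_{\mathrm{proj}}$.
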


The proof of Proposition \ref{prp:holder-fcns} is deferred to the Appendix, Subsection \ref{sec:proof-holder-fcns}.

Unlike \cite[Proposition 7]{DalHebLed14}, this result does not require the knowledge of \( L \) or \( \alpha \) to compute the tuning parameter \( \lambda \), but only the noise level \( \sigma \). As a result, the estimator is therefore adaptive to the smoothness of the underlying function. This effect comes from better estimates on \( \rho \) than in the 1D case.

	\cite{mammen_locally_1997} have shown that asymptotically, TV regularization together with spline regression achieves the minimax rate for the estimation of \( k \)-times differentiable functions in 2D.
	Our result however holds for finite sample size and fractional smoothness, albeit only for \( \alpha \in (0,1] \).

It is not surprising that our results are suboptimal for $d\ge 3$. Indeed, penalizing by the size of jumps is not appropriate for H\"older functions. One should rather penalize by the number of blocks. It is merely a coincidence that in two dimensions this method leads to optimal and adaptive rates for H\"older functions.

\subsection{Piecewise constant and piecewise H\"older functions} 
\label{sub:cartoon_functons}

Recall that Cor\-ollary \ref{cor:oracle-tv-2d} allows us to get scale free results, i.e., bounds that do not scale with jump height. It is therefore well suited to detect sharp boundaries, one of the features often associated with total variation regularization.
To formalize this point, we analyze two models that involve a boundary, namely piecewise constant and piecewise smooth signals. The framework below largely builds upon \cite{willett_faster_2005}.

First, let us define the box-counting dimension of a set, which we will use as the measure of the complexity of the boundary.

\begin{definition}[Box-counting dimension]
	\label{def:box-counting-dim}
	Let \( B \subseteq [0,1]^d \) be a set and denote by \( N(r) \) the minimum number of (Euclidean) balls of radius \( r \) required to cover \( B \).
	The \emph{box-counting dimension} of \( B \) is defined as
	\begin{equation}
		\label{eq:bo}
		\operatorname{dim}_{\mathrm{box}}(B) := \limsup_{r \to 0} \frac{\log N(r)}{\log (1/r)}.
	\end{equation}
\end{definition}

The box-counting dimension generalizes the notion of linear dimension. 
For instance, if \( B \) is a smooth \( d_0 \)-dimensional manifold, then its box-counting dimension is equal to \( d_0 \).

\begin{definition}[Piecewise constant functions]
\label{def:pw-constant}
	For \( \beta > 0 \), we call a function \( f \colon \R^d \to \R \) \emph{piecewise constant} and write \( f \in PC(d,\beta) \) if there is an associated  boundary set \( B(f) \) such that:
	\begin{enumerate}
		\item The function \( f \) is locally constant on \( [0,1]^{d} \setminus B(f) \), \ie for all \( x \in [0,1]^d \setminus B(f) \), there is an \( \varepsilon > 0 \) such that for all \( y \) with \( \| y - x \| < \varepsilon \), \( f(x) = f(y) \).
		\item The boundary set \( B(f) \) has covering number \( N(r) \leq \beta r^{-(d-1)} \) for some $\beta>0$. In particular, $\operatorname{dim}_{\mathrm{box}}(B)\le d-1$.
	\end{enumerate}
\end{definition}

\begin{definition}[Piecewise H\"older functions]
\label{def:pw-holder}
	For \( \alpha \in (0,1] \) and \( \beta, L > 0 \), we call a function \( f \colon \R^d \to \R \) \emph{piecewise H\"older}, \( f \in PH(d, \beta, \alpha, L) \), if there is an associated boundary set \( B(f) \) such that:
	\begin{enumerate}
		\item \( f \) is locally \( L \)-H\"older on \( [0,1]^{d} \setminus B(f) \), \ie for all \( x \in [0,1]^d \setminus B(f) \), there is an \( \varepsilon > 0 \) such that for all \( y \) with \( \| y - x \| < \varepsilon \), \( | f(x) - f(y) | \leq L \| x - y \|_\infty^\alpha \).
		\item \( B(f) \) has box-counting dimension at most \( d-1 \), and its covering number \( N(r) \) is bounded by \( N(r) \leq \beta r^{-(d-1)} \).
	\end{enumerate}
\end{definition}

One intuition behind these definitions is to consider the signal as a ``cartoon image" containing large patches that are constant or fairly smooth, split by sharp boundaries.

Using Corollary \ref{cor:oracle-tv-2d}, we can now establish estimation rates for these classes of functions.

\begin{proposition}
	\label{prp:est-pw-cst}
		Fix \( \delta \in (0,1) \), \( d \geq 2 \),  $N \ge 1$, $n=N^d$  and let \( y \) be sampled according to the Gaussian sequence model \eqref{eq:al}, where \(\theta^*_{ \bm{i}} =  f^*(x_{\bm{i}}) \), \( \bm{i} \in [N]^d \) for some unknown function $f^*\in PC(d, \beta)$.
		There exist positive constants $c$ and $C$ such that the following holds.
Let $\hat \theta$ be the TV denoiser defined in~\eqref{eq:y} for the $d$-dimensional grid with incidence matrix \( D_d \) and tuning parameter \( \lambda = c \sigma \sqrt{r_d(n) \log(e n/\delta)}/n \), where $r_2(n)=\log n$ and $r_d (n)=1$ for $d \ge 3$.
Moreover, let $\hat f:[0,1]^d \to \R$ be defined by $\hat f(x_{\bm{i}})=\hat \theta_{\bm{i}}$ for $\bm{i} \in [N]^d$ and arbitrarily elsewhere on the unit hypercube $[0,1]^d$.
Then,
%
%
	\begin{equation}
		\label{eq:br}
		\| \widehat{f} - f^\ast \|_n^2 \lesssim \frac{\sigma^2 \beta}{n^{1/d}} r_d(n) \log(e n/\delta) +  \frac{\sigma^2}{n}\log(e /\delta)\,,
	\end{equation}
	with probability at least \( 1 - 2 \delta \).
\end{proposition}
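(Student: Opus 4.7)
The plan is to invoke Corollary~\ref{cor:oracle-tv} (or Corollary~\ref{cor:oracle-tv-2d} when $d=2$) with the oracle choice $\bar{\theta}=\theta^\ast$ and $T=\supp(D_d\theta^\ast)$. With this choice the approximation term $\tfrac{1}{n}\|\bar\theta-\theta^\ast\|^2$ vanishes and $(D_d\bar\theta)_{T^c}=0$, so the oracle inequality collapses to
$$
\frac{1}{n}\|\hat\theta-\theta^\ast\|^2 \lesssim \frac{\sigma^2}{n}\Big(\|D_d\theta^\ast\|_0\,r_d(n)\log(en/\delta) + \log(e/\delta)\Big)
$$
with probability at least $1-2\delta$. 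Since $\hat f(x_{\bm i})=\hat\theta_{\bm i}$ on the grid and $\|g\|_n^2=\frac1n\sum_{\bm i}g(x_{\bm i})^2$ by definition, we have $\|\hat f-f^\ast\|_n^2 = \tfrac1n\|\hat\theta-\theta^\ast\|^2$, so everything reduces to a purely combinatorial bound on $\|D_d\theta^\ast\|_0$.

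The core step is to estimate $\|D_d\theta^\ast\|_0$, i.e.\ the number of grid edges on which $f^\ast$ jumps. Because $f^\ast$ is locally constant on $[0,1]^d\setminus B(f^\ast)$, any edge $e=(x_{\bm i},x_{\bm j})$ for which $f^\ast(x_{\bm i})\neq f^\ast(x_{\bm j})$ has the property that the closed segment joining $x_{\bm i}$ and $x_{\bm j}$ must meet $B(f^\ast)$; in particular, both endpoints lie within Euclidean distance $\sqrt d/N$ of $B(f^\ast)$. Now cover $B(f^\ast)$ by $N(1/N)\le \beta N^{d-1}$ Euclidean balls of radius $1/N$ (valid once $N$ is large enough that the covering-number hypothesis has kicked in). A standard packing argument shows that each such ball, dilated to radius $2\sqrt d/N$, contains at most $C(d)$ vertices of a regular grid of spacing $1/N$, and each grid vertex is incident to at most $2d$ edges. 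Summing over the cover yields
$$
\|D_d\theta^\ast\|_0 \;\le\; C(d)\,\beta\,N^{d-1} \;=\; C(d)\,\beta\,n^{(d-1)/d}.
$$

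Substituting this back gives
$$
\|\hat f-f^\ast\|_n^2 \;\lesssim\; \frac{\sigma^2\beta\,r_d(n)\log(en/\delta)}{n^{1/d}} + \frac{\sigma^2}{n}\log(e/\delta),
$$
which is exactly \eqref{eq:br}. The only mildly delicate point is the geometric counting: one must carefully verify that each $1/N$-ball in the cover can contribute only $O_d(1)$ jumping grid edges (a packing argument using the bounded number of grid vertices in a ball of fixed multiple of the grid spacing), and that the hypothesis $N(r)\le\beta r^{-(d-1)}$ is usable at the scale $r=1/N$; both are routine, and no additional probabilistic argument beyond the oracle inequality is required.
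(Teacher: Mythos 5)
Your proposal is correct and follows essentially the same route as the paper: both reduce the statement to the oracle inequality of Corollaries~\ref{cor:oracle-tv-2d} and~\ref{cor:oracle-tv} applied with $\bar\theta=\theta^\ast$, and then bound the number of jumping edges by $C(d)\,\beta\,N^{d-1}$ using the covering-number hypothesis on $B(f^\ast)$. The only cosmetic difference is that the paper takes $T$ to be the set of all edges within distance $4/N$ of the boundary and counts grid points in that neighborhood via a volume comparison (Lemma~\ref{lem:b}), whereas you take $T=\supp(D_d\theta^\ast)$ and count directly by a packing argument over the covering balls; both yield the same bound.
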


The proof of Proposition \ref{prp:est-pw-cst} is deferred to the Appendix, Subsection \ref{sec:proof-est-pw-cst}.

Combining the results for piecewise constant and H\"older smooth functions, we can get the following extension to piecewise smooth functions.
\begin{proposition}
	\label{prp:cartoon-fcns}
		Fix \( \delta \in (0,1) \), \( d \geq 2 \),  $N \ge 1$, $n=N^d$  and let \( y \) be sampled according to the Gaussian sequence model \eqref{eq:al}, where \(\theta^*_{ \bm{i}} =  f^*(x_{\bm{i}}) \), \( \bm{i} \in [N]^d \) for some unknown function $f^*\in PH(d,\beta, \alpha, L)$,  $\alpha \in (0,1]$, $L>0$, $\beta>0$.
		There exist positive constants $c$, $C$ and \( C' = C'(\sigma, L, d) \) such that the following holds.
		Let $\hat \theta$ be the TV denoiser defined in~\eqref{eq:y} for the $d$-dimensional grid with incidence matrix \( D_d \) and tuning parameter \( \lambda = c \sigma \sqrt{r_d(n) \log(e n/\delta)}/n \), where $r_2(n)=\log n$ and $r_d (n)=1$ for $d \ge 3$.
		Moreover, let $\hat f:[0,1]^d \to \R$ be defined by $\hat f(x_{\bm{i}})=\hat \theta_{\bm{i}}$ for $\bm{i} \in [N]^d$ and arbitrarily elsewhere on the unit hypercube $[0,1]^d$.

If \( N \geq C'(L, \sigma, d) \sqrt{r_d(n) \log(en/\delta)} \), then
%
	\label{prp:holder-rate}
	\begin{align}
		\label{eq:e}
		\frac{1}{n} \| \widehat{f} - f^\ast \|^2
		\lesssim {} & \frac{\big(L^2(\sigma\sqrt{r_d(n) \log(e n/\delta)})^{2\alpha} \big)^{\frac{1}{\alpha+1}}}{n^{\frac{2 \alpha}{d \alpha + d}}}
		+  \frac{\sigma^2\beta}{n^{1/d}}r_d(n) \log(en\delta)+\frac{\sigma^2}{n}\log(e /\delta)\,,
	\end{align}
	with probability at least \( 1 - 2 \delta \).
\end{proposition}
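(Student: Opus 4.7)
The plan is to apply the sharp oracle inequality for the grid---Corollary~\ref{cor:oracle-tv-2d} if $d=2$ and Corollary~\ref{cor:oracle-tv} if $d\ge 3$---to a hybrid oracle $\bar\theta$ that interpolates between the constructions used in Propositions~\ref{prp:holder-fcns} and~\ref{prp:est-pw-cst}. Partition $[0,1]^d$ into subcubes $\{Q_j\}$ of side $\eta = k/N$ for an integer $k$ to be chosen, let $B_j\subseteq[N]^d$ denote the corresponding block of grid indices, and call $Q_j$ \emph{interior} if $Q_j\cap B(f^*)=\emptyset$ and \emph{boundary} otherwise. I would define $\bar\theta_{\bm{i}}=f^*(x_{\bm{j}})$ on each interior block $B_j$ (with $\bm{j}\in B_j$ a fixed reference index) and $\bar\theta_{\bm{i}}=\theta^*_{\bm{i}}$ on each boundary block. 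The free set $T$ in the oracle inequality is taken to consist of (a) all edges having at least one endpoint in a boundary block, together with (b) all edges crossing $B(f^*)$ inside a boundary block. The role of $T$ is to absorb the large, discontinuity-sized jumps of $\bar\theta$ near $B(f^*)$ into the $\ell_0$-cost $(C\sigma^2/n)|T|r_d(n)\log(en/\delta)$, while the small, H\"older-sized jumps are paid through the $\ell_1$-cost $4\lambda\|(D\bar\theta)_{T^c}\|_1$.

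The covering hypothesis $N(r)\le\beta r^{-(d-1)}$ yields the key combinatorial estimate $|T|\lesssim\beta N^{d-1}=\beta n^{(d-1)/d}$. Applied at scale $r\sim\eta$ it bounds the number of boundary blocks by $\beta\eta^{-(d-1)}$, each of which contributes at most $2d\cdot k^{d-1}=2d(\eta N)^{d-1}$ interface edges to (a), whereas (b) is bounded by the total number of grid edges crossing $B(f^*)$, again $\lesssim\beta N^{d-1}$ by the same argument used in the proof of Proposition~\ref{prp:est-pw-cst}. This produces the second summand of~\eqref{eq:e}. On $T^c$ only two kinds of edges contribute: (i) edges between two adjacent interior cubes, and (ii) non-boundary-crossing edges within boundary blocks. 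For (i), the union $Q_{j_1}\cup Q_{j_2}$ is convex and disjoint from $B(f^*)$, so $f^*$ is $L$-H\"older there, each such edge carries a jump of at most $L(2\eta)^\alpha$, and the number of such edges is $O(N^{d-1}/\eta)$, giving a contribution $\lesssim LN^{d-1}\eta^{\alpha-1}$; a direct computation shows (ii) is dominated by (i) at the optimum. The approximation error is bounded by $L^2\eta^{2\alpha}$ via H\"older on interior blocks and exact matching on boundary blocks.

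Substituting these estimates into the oracle inequality and optimizing $\eta\sim(\sigma\sqrt{r_d(n)\log(en/\delta)}/L)^{1/(\alpha+1)}n^{-1/(d(\alpha+1))}$ balances $L^2\eta^{2\alpha}$ against $\lambda LN^{d-1}\eta^{\alpha-1}$ and yields the first summand of~\eqref{eq:e}; the hypothesis $N\ge C'(\sigma,L,d)\sqrt{r_d(n)\log(en/\delta)}$ guarantees that the optimum $\eta$ is at most one, so the block decomposition is well defined. The main obstacle is the clean count $|T|\lesssim\beta N^{d-1}$ independent of $\eta$: naively placing every edge internal to a boundary block into $T$ would give $|T|\sim\beta n\eta$ and spoil the boundary rate. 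The key observation enabling the tight count is that, because $\bar\theta=\theta^*$ on boundary blocks, the only large jumps of $\bar\theta$ there occur across $B(f^*)$, so one may let the many small H\"older-sized jumps within boundary blocks fall into $T^c$ while restricting $T$ to the surface-area-sized set of edges carrying truly large jumps.
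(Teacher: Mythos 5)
Your proposal is correct and follows essentially the same route as the paper: the covering hypothesis yields the surface-area count $|T|\lesssim\beta N^{d-1}$ for the set of large-jump edges near $B(f^*)$, a piecewise-constant oracle at an optimized block scale controls the approximation error $L^2(k/N)^{2\alpha}$ and the $\ell_1$ remainder $\lambda\|(D\bar\theta)_{T^c}\|_1\lesssim \lambda L N^{d-1}(k/N)^{\alpha-1}$, and these are balanced exactly as in Proposition~\ref{prp:holder-fcns} before invoking Corollaries~\ref{cor:oracle-tv-2d} and~\ref{cor:oracle-tv}. The only (immaterial) differences are that the paper takes $T$ to be all edges within $4/N$ of $B(f^*)$ and makes $\bar\theta$ constant on each box--component cell, whereas you keep $\bar\theta=\theta^*$ on boundary blocks and let the resulting $O(LN^{-\alpha})$-sized jumps fall into $T^c$; note that your item (a) must be read as the block-interface edges only---as your count of $2dk^{d-1}$ per block and your closing remark make clear---and not literally as all edges meeting a boundary block, which would give $|T|\sim\beta n k/N$ and ruin the second term.
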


The proof of Proposition \ref{prp:cartoon-fcns} is deferred to the Appendix, Subsection \ref{sec:proof-cartoon-fcns}.

For a Lipschitz boundary in two dimensions, this matches the minimax bound \( n^{-2 \alpha /(2 \alpha + 2)} \vee n^{-1/2} \) for boundary fragments in \cite[Theorem 5.1.2]{KorTsy93} up to logarithmic factors.
However, unlike the framework of \cite{KorTsy93}, our techniques do not allow an improvement of the bound for smoother boundaries parametrization because \( |T| \) will always be of the order \( O(N^{d-1}) \).
On the other hand, unlike the algorithms in \cite{KorTsy93} and \cite{arias-castro_oracle_2012}, our analysis allows for any jump sizes, so TV regularization automatically adapts to both \( B(f) \) and \( \alpha \).

\subsection{Bi-isotonic matrices} 
\label{sub:monotone_functions}

In our final example, we consider two-di\-men\-sional signals that increase in both directions, sometimes referred to~\emph{bi-isotonic}. The class of bi-isotonic matrices is defined as follows,
\begin{equation}
	\label{eq:bc}
	\mathcal{M} := \big\{ \theta \in \R^{ N \times N } : \theta_{j_1, j_2} \geq \theta_{i_1, i_2} \text{ if } j_1 \geq i_1 \text{ and } j_2 \geq i_2 \big\}.
\end{equation}
Recently, it was shown in \cite{chatterjee_matrix_2015, Bel15} that the least squares estimator for \( \mathcal{M} \) yields the near minimax rate \( \sqrt{D(\theta^\ast)/n}\, (\log n)^4 \), where \( D(\theta^\ast) := (\theta^\ast_{ N, N} - \theta^\ast_{1,1})^2 \) denotes the square variation of the matrix.

In the following, we show that the 2D TV denoiser can match this rate and that it also improves on the exponent of the \( \log \) factors.

\begin{proposition}
	\label{prp:monotone-rate}
	Let \( y \) be a sample of the Gaussian sequence model \eqref{eq:al}, \( \delta \in (0,1) \) and denote by \( \theta^\uparrow \) the projection of \( \theta^\ast \) onto \( \mathcal{M} \).	Fix $\delta \in (0,1)$ and let  $\hat \theta$ denote the TV denoiser on the 2D grid with \( \lambda = c \sigma \sqrt{(\log n) \log (e n/\delta)}/n \) defined in  \eqref{eq:y}. Then there exists a constant $C>0$ such that
%
%
	\begin{equation}
		\label{eq:ac}
		\frac{1}{n} \| \hat\theta - \theta^\ast \|^2
		\leq \frac{1}{n} \| \theta^\uparrow - \theta^\ast \|^2 + C\sigma \sqrt{\frac{{(\log n)\log(n/\delta)}}{{n}} }\sqrt{D( \theta^\uparrow)}+ C\frac{\sigma^2}{n} \log(e /\delta),
	\end{equation}
	with probability at least \( 1 - 2 \delta \).
\end{proposition}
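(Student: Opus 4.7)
The plan is to apply the sharp oracle inequality from Corollary~\ref{cor:oracle-tv-2d} with the clever choice \( \bar\theta = \theta^\uparrow \) (the bi-isotonic projection of \( \theta^\ast \)) and \( T = \emptyset \), so that only the $\ell_1$ term \( \| D\bar\theta \|_1 \) appears and no \( |T| \)-dependent remainder is incurred. Under this choice, the oracle inequality reduces to
\begin{equation}
\frac{1}{n}\|\hat\theta - \theta^\ast\|^2 \le \frac{1}{n}\|\theta^\uparrow - \theta^\ast\|^2 + 4\lambda \|D\theta^\uparrow\|_1 + \frac{C\sigma^2}{n}\log(e/\delta),
\end{equation}
valid with probability at least \( 1-2\delta \). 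The goal then reduces entirely to bounding \( \|D\theta^\uparrow\|_1 \) in terms of \( \sqrt{D(\theta^\uparrow)} \) and \( \sqrt{n} \).

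The key observation is that for a bi-isotonic matrix the total variation collapses via telescoping. Since \( \theta^\uparrow \in \mathcal{M} \), every row-wise and column-wise difference is nonnegative, so
\begin{equation}
\|D\theta^\uparrow\|_1 = \sum_{i=1}^{N}\sum_{j=1}^{N-1}\bigl(\theta^\uparrow_{i,j+1}-\theta^\uparrow_{i,j}\bigr) + \sum_{j=1}^{N}\sum_{i=1}^{N-1}\bigl(\theta^\uparrow_{i+1,j}-\theta^\uparrow_{i,j}\bigr) = \sum_{i=1}^{N}(\theta^\uparrow_{i,N}-\theta^\uparrow_{i,1}) + \sum_{j=1}^{N}(\theta^\uparrow_{N,j}-\theta^\uparrow_{1,j}).
\end{equation}
Each telescoped difference is at most \( \theta^\uparrow_{N,N}-\theta^\uparrow_{1,1}=\sqrt{D(\theta^\uparrow)} \) because monotonicity gives \( \theta^\uparrow_{i,N}\le \theta^\uparrow_{N,N} \) and \( \theta^\uparrow_{i,1}\ge \theta^\uparrow_{1,1} \), and similarly for the columns. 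This yields the clean bound
\begin{equation}
\|D\theta^\uparrow\|_1 \le 2N\sqrt{D(\theta^\uparrow)} = 2\sqrt{n}\sqrt{D(\theta^\uparrow)}.
\end{equation}

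Plugging this estimate into the specialized oracle inequality, together with \( \lambda = c\sigma\sqrt{(\log n)\log(en/\delta)}/n \), produces
\begin{equation}
4\lambda \|D\theta^\uparrow\|_1 \le 8c\,\sigma\sqrt{\frac{(\log n)\log(en/\delta)}{n}}\sqrt{D(\theta^\uparrow)},
\end{equation}
which matches the middle term in the target bound up to the absorption of \( 8c \) into the constant \( C \). Combining the three pieces completes the proof. No step here is genuinely difficult: the oracle inequality does all of the probabilistic heavy lifting, and the only structural ingredient is the telescoping identity for monotone arrays. If anything, the mild subtlety is simply recognizing that choosing \( T=\emptyset \) (the pure \( \ell_1 \) regime of the oracle inequality) is optimal here, since bi-isotonic signals need not be piecewise constant and thus the \( \ell_0 \) branch of Corollary~\ref{cor:oracle-tv-2d} is useless.
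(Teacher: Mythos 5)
Your proposal is correct and follows essentially the same route as the paper: apply the slow-rate ($T=\emptyset$) form of the 2D oracle inequality with $\bar\theta=\theta^\uparrow$, then telescope the row and column differences of the bi-isotonic matrix to get $\|D\theta^\uparrow\|_1\le 2N(\theta^\uparrow_{N,N}-\theta^\uparrow_{1,1})=2\sqrt{nD(\theta^\uparrow)}$ and plug in $\lambda$. No discrepancies to report.
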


\begin{proof}
	We use the slow rate version of \eqref{eq:z} for \( \bar{\theta} = \theta^\uparrow \),
	\begin{equation}
		\label{eq:af}
		\frac{1}{n} \| \hat\theta - \theta^\ast \|^2
		\leq \frac{1}{n} \| \theta^\uparrow - \theta^\ast \|^2 + 4 \lambda \| D \theta^\uparrow \|_1 + \frac{C\sigma^2}{n} \log (e /\delta).
	\end{equation}
	Because \( \theta^\uparrow \) is bi-isotonic, summing along the rows yields
	\begin{align}
		\label{eq:f}
		\sum_{i = 1}^{N} \sum_{j = 1}^{N-1} | \theta^\uparrow_{i, j+1} - \theta^\uparrow_{i, j} |
		\leq {} & \sum_{i = 1}^{N} ( \theta^\uparrow_{i, N} - \theta^\uparrow_{i, 1} ) \leq N ( \theta^\uparrow_{N,N} - \theta^\uparrow_{1,1} ),
	\end{align}
	and similarly along columns, which combined gives us
	\begin{align}
		\label{eq:ak}
		\| D\theta^\uparrow \|_1 \leq 2 N ( \theta^\uparrow_{N, N} - \theta^\uparrow_{1, 1} ) = 2 \sqrt{n D( \theta^\uparrow )}.
	\end{align}
	Plugging this into \eqref{eq:af}, together with inserting the value of \( \lambda \), we have
	\begin{equation}
		\label{eq:bd}
		\frac{1}{n} \| \hat\theta - \theta^\ast \|^2
		\leq \frac{1}{n} \| \theta^\uparrow - \theta^\ast \|^2 + C\sigma \sqrt{\frac{{(\log n)\log(n/\delta)}}{{n}} }\sqrt{D( \theta^\uparrow)}+ C\frac{\sigma^2}{n} \log(e /\delta),
	\end{equation}
	for some \( C > 0 \).
\end{proof}
We recover the results of \cite{chatterjee_matrix_2015, Bel15} with a smaller exponent in the logarithmic factor.
On the other hand, the TV-denoiser requires an estimate for \( \sigma \) (or at least an upper bound), unlike the least squares estimator, which does not require any tuning.

Note further that our rate scales with $\sigma$ rather than $\sigma^2$ in \cite{chatterjee_matrix_2015, Bel15}. This is because we use a ``slow rate" bound. 

Unlike~\cite{chatterjee_matrix_2015, Bel15} we do not show that our estimator adapts to the number of rectangles on which the matrix is piecewise constant. In particular, they show that if the number of such rectangles is a constant, then the least squares estimator achieves a fast rate of order $\sigma^2(\log n)^8/n$. This is not the case in the present paper. Indeed, the TV denoiser is not the correct tool for that. Even in the case of two rectangles, the number of active edges on the 2D grid is already linear in $N$ leading to rates that are slower than $\sigma^2/N\gg \sigma^2(\log n)^8/n$. Nevertheless, it is not hard to show that if $\theta^*$ is an $N\times N$ matrix with a triangular structure the form $\theta^*_{ij}=\1(i \ge j)$, then this matrix is well approximated by $N$ rectangles. In this case, the results of~\cite{chatterjee_matrix_2015, Bel15} yield a bound for the least squares estimator $\hat \theta^{\textsc{ls}}$ of the form
$$
\frac{1}{n} \| \hat\theta^{\textsc{ls}} - \theta^\ast \|^2 \le C \sigma^2\frac{(\log n)^8}{\sqrt{n}}
$$
and it is not hard to see that the TV denoiser  yields
$$
\frac{1}{n} \| \hat\theta - \theta^\ast \|^2 \le C (\sigma\wedge 1)^2\frac{(\log n)^2}{\sqrt{n}}
$$
where both results are stated with large but constant probability (say 99\%). It is not excluded that the least squares estimator still achieves faster rates in this case but the currently available results do not lead to better rates.

Finally, note that unlike \cite[Proposition 6]{DalHebLed14}, \( \lambda \) does not have to depend on \( D( \theta^\uparrow) \) here because of the better behavior of \( \rho \) for the 2D grid.

\medskip

{\bf Acknowledgments}
We would like to thank Vivian Viallon for bringing the paper by \cite{ShaSinRin12} to our attention. We thank the participants in the workshop ``Computationally and Statistically Efficient Inference for Complex Large-scale Data", that took place in Oberwolfach on March 6--12, 2016; in particular Axel Munk for pointers to the literature on the spectral decomposition of the Toeplitz matrix in~\eqref{EQ:toeplitzmat} and Alessandro Rinaldo for interesting discussion. Finally, we thank Ryan Tibshirani for pointing us to the paper~\cite{WanShaSmo15} and discussing his results with us. 

Philippe Rigollet is supported in part by NSF grants DMS-1317308 and CAREER-DMS-1053987.

\bibliographystyle{amsalpha}
\bibliography{TVRates_noabstracts}

\appendix

\section{Numerical experiments} 
\label{sub:numerical_experiments}

\begin{figure}
\captionsetup[subfigure]{justification=centering}
	\centering
	\begin{subfigure}[t]{0.48\textwidth}
		\includegraphics[width=\textwidth]{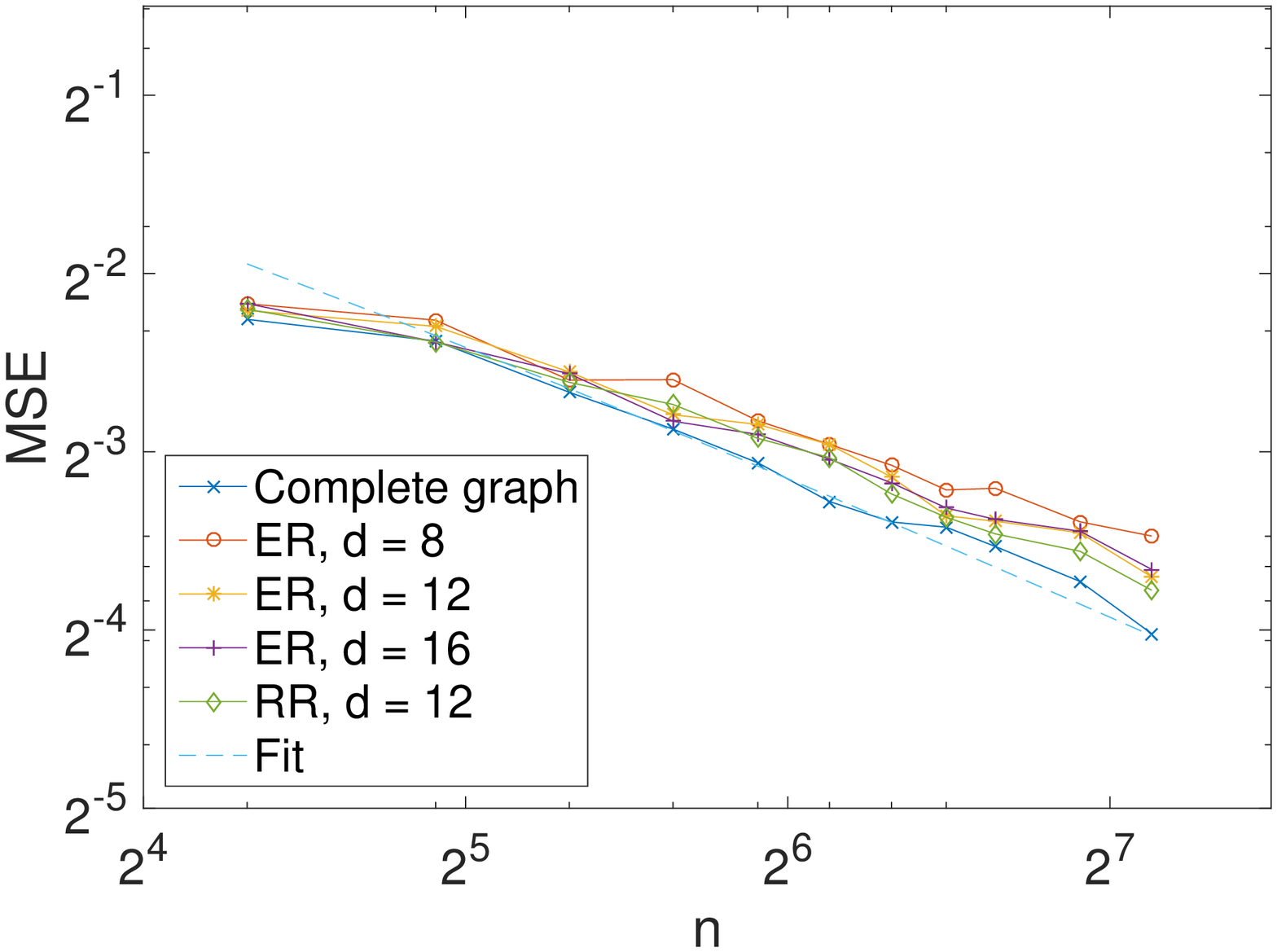}
		\centering \caption{Oracle choice of \( \lambda \)}
		\label{fig:mse-oracle}
	\end{subfigure}
	~
	\begin{subfigure}[t]{0.48\textwidth}
		\includegraphics[width=\textwidth]{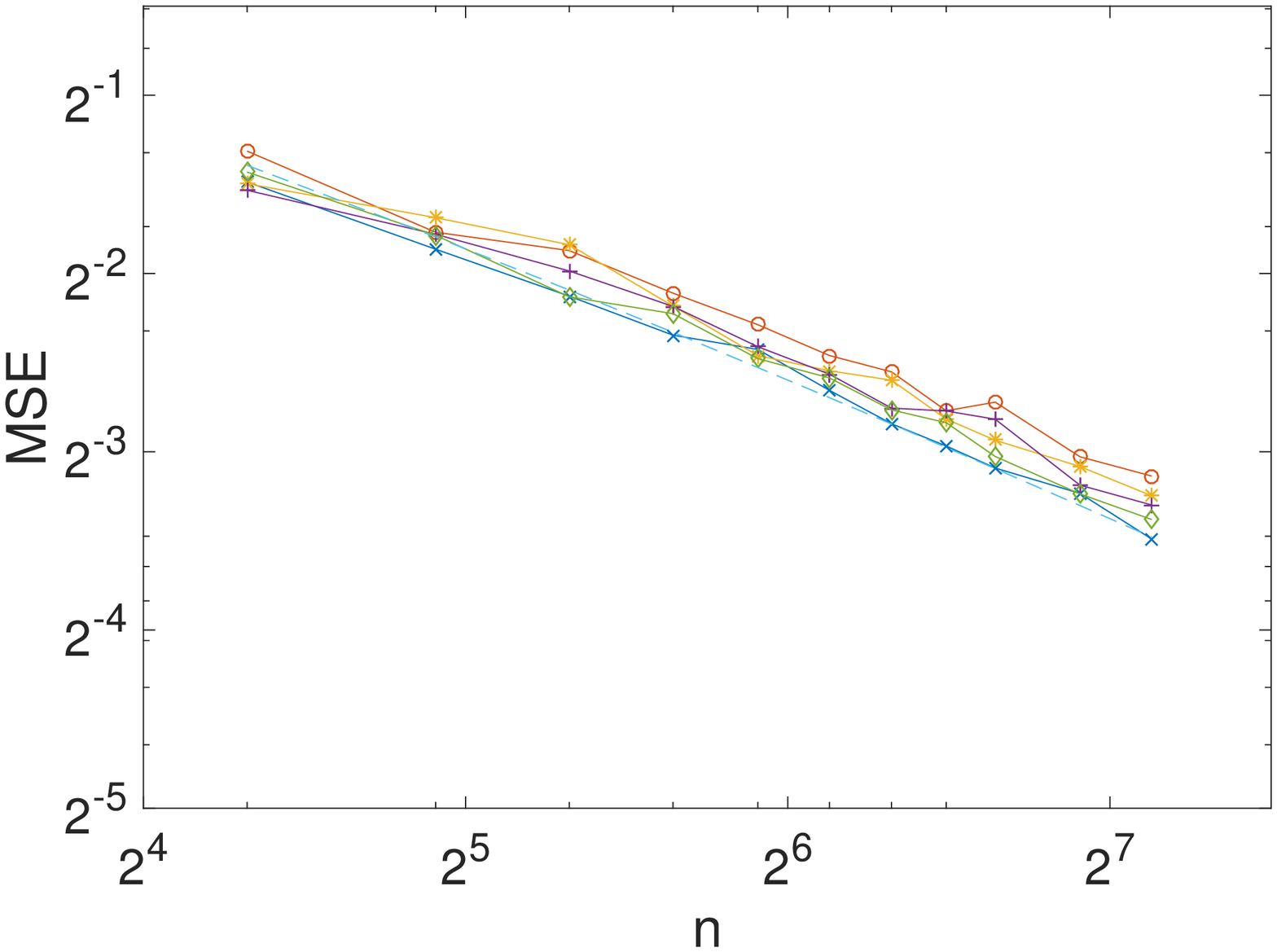}
		\caption{Fixed choice of \( \lambda = \sigma \rho \sqrt{\log(n)}\)}
		\label{fig:mse-fixed}
	\end{subfigure}
	\caption{MSE for the Island model with $k=l=3$ for different choices of the graph and different choices of the regularization parameter $\lambda$. The dotted line the best fit of form $C\log(n)/n$ to the complete graph case. }
	\label{fig:mse}
\end{figure}

\begin{figure}
	\centering
		\includegraphics[width=0.47\textwidth]{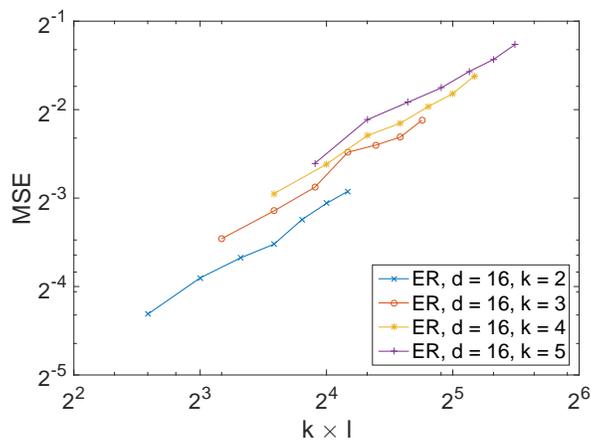}
		\caption{MSE for the Island model for different choices of $k\cdot l$ ($n=100, \lambda= \sigma \rho \sqrt{\log(n)}$).} 
		\label{fig:islands-fixed}
\end{figure}

In order to illustrate our findings in Subsections \ref{sub:complete_graph} and \ref{sub:random-graphs}, we used the TV denoiser implementation from \cite{XinKawWan14}.

\smallskip

\noindent {\it The Island model.} Consider a partition of $[n]$ into  \( k  \) blocks $B_1, \ldots, B_{k}$ of size \(|B_j|= l, l \in [k] \) and a block $B_0$ of size $|B_0|=n-kl$. We focus on cases where $n \gg kl$ and we call block $B_0$, the \emph{background component} and the blocks $B_j, j \in k$ are called \emph{islands}. The unknown parameter $\theta^*$ has coordinates $\theta^*_i=50 + 10j, i \in B_j, j \in k$, and $\theta^*_l= 50, i \in B_0$. 

\smallskip

\noindent {\it Graphs. }We consider three types of graphs to determine our penalty structure: the complete graph, the Erd\H{o}s-R\'enyi random graph with expected degree $d$ and the random $d$-regular graph\footnote{To generate instances of the random regular graph, we employed the code from \cite{Pun10}, which implements the pairing algorithm by Bollob\'as, \cite{Bol80}.} for different values of $d$. Note that in the case of the random graphs, we refer to a realization from a given distribution as ``the" random graph.

\smallskip

\noindent {\it Choice of $\lambda$. } We consider two choices for the regularization parameter \( \lambda \): the fixed choice, denote by $\lambda_{\textrm{th}}$ dictated by our theoretical results and an oracle choice $\lambda_{\textrm{or}}$ on a geometric grid, obtained by $\lambda_{\textrm{or}}=10\lambda_{\textrm{th}} \beta^{j^*}$ where $j^*$ is the smallest $j \ge 1$ such that $\|\hat \theta(10\lambda_{\textrm{th}}\beta^{j^*+i})-\theta^*\|_2\ge \|\hat \theta(10\lambda_{\textrm{th}}\beta^{j^*})-\theta^*\|_2$ for \( i = 1, 2, 3 \), and $\hat\theta(\lambda)$ is the solution to~\eqref{eq:y} and \( \beta = 0.85 \).

\smallskip

Throughout the simulations, we choose $\sigma=0.5$. The plotted results are averaged over 50 realizations of the noise and, in the case of a random graph, over realizations of said random graph.

In Figure~\ref{fig:mse}, we consider the Island model with $k=3$ islands, each of size $l=3$. We plot (on a log-log scale) the mean squared error of the TV denoiser as  a function of $n$ for both the oracle choice and the theoretical choice of $\lambda$ for different graph models: the complete graph, the Erd\H{o}s-R\'enyi random graphs with expected degree $d$ for $d=2,12, 16$ and the random $12$-regular graph. The dotted line indicates  the best fit of the form $C\log(n)/n$  that our theoretical analysis predicts in the complete graph case. 
In all cases, we can see that the mean squared error essentially scales as $C(\log n)/n$ as predicted by our theory. Moreover, all graphs show similar performance, though the sparse ones lead to better computational performance.

The purpose of Figure \ref{fig:islands-fixed} is to illustrate that the scaling $kl/n$ for the model with islands obtained in subsection~\ref{sub:random-graphs} is indeed the correct one. In this set of simulations we use the  Erd\H{o}s-R\'enyi graph with expected degree $d=16$ and plot the mean squared error for different values of the pair $(k,l)$. Specifically, we choose $(k,l)\in [2:5]\times[3:9]$ and indeed observe a linear dependence on the product~$kl$.

\section{Proofs}

\subsection{Proof of the main theorem: a sharp oracle inequality for TV denoising}
\label{proof:main}

In this subsection, we prove Theorem~\ref{thm:Lasso-rates} that we recall for convenience

 \begin{theorem*}[Sharp oracle inequality for TV denoising]
Fix  \( \delta \in (0,1) \), $T \subset [m]$ and let \( D \) being the incidence matrix of a connected graph \( G \). Define the regularization parameter
	\begin{equation}
		\label{eq:j}
		\lambda := \frac{1}{n} \sigma \rho \sqrt{2 \log\big(\frac{em}{\delta}\big)}.
	\end{equation}
With this choice of $\lambda$, the TV denoiser $\hat \theta$ defined in \eqref{eq:y}  satisfies
	\begin{align}
		\label{eq:q}
		 \frac{1}{n} \| \hat\theta - \theta^\ast \|^2
		\leq {} & \inf_{\bar{\theta} \in \R^n} \left\{ \frac{1}{n} \| \bar{\theta} - \theta^\ast \|^2 + 4 \lambda \| (D \bar{\theta})_{T^c} \|_1 \right\}
		+ \frac{8 \sigma^2}{n} \left( \frac{|T| \rho^2}{\kappa_T^{2}} \log \big(\frac{em}{\delta}\big) + \log\big(\frac{e}{\delta}\big) \right),
	\end{align}
	on the estimation error with probability at least \( 1 - 2 \delta \). 
\end{theorem*}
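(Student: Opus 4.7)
The plan is a ``sharp'' basic-inequality argument that exploits strong convexity of the squared-error loss, combined with a noise decomposition along $\mathrm{range}(D^\top)$ and $\ker D$, and a standard cone-type manipulation of the penalty restricted to $T$. First, because $\theta\mapsto\frac{1}{n}\|\theta-y\|^2$ is $\frac{2}{n}$-strongly convex and $\lambda\|D\theta\|_1$ is convex, the minimizer $\hat\theta$ must satisfy, for every $\bar\theta\in\R^n$,
\begin{equation*}
\frac{1}{n}\|\hat\theta-\theta^\ast\|^2+\frac{1}{n}\|\hat\theta-\bar\theta\|^2\le\frac{1}{n}\|\bar\theta-\theta^\ast\|^2+\frac{2}{n}\langle\varepsilon,\hat\theta-\bar\theta\rangle+\lambda\|D\bar\theta\|_1-\lambda\|D\hat\theta\|_1.
\end{equation*}
The extra quadratic $\frac{1}{n}\|\hat\theta-\bar\theta\|^2$ on the left is precisely what will let us keep the sharp leading constant $1$ in front of the oracle term at the end, rather than a loose factor arising from $\|\hat\theta-\bar\theta\|^2\le 2\|\hat\theta-\theta^\ast\|^2+2\|\bar\theta-\theta^\ast\|^2$.

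Next, I would control the stochastic inner product $\langle\varepsilon,\hat\theta-\bar\theta\rangle$ by splitting $\R^n=\mathrm{range}(D^\top)\oplus\ker D$. Since $G$ is connected, $\ker D=\spann(\bone_n)$ and $I=D^\dagger D+P$ with $P=\bone_n\bone_n^\top/n$, so
\begin{equation*}
\langle\varepsilon,\hat\theta-\bar\theta\rangle=\langle(D^\dagger)^\top\varepsilon,D(\hat\theta-\bar\theta)\rangle+\langle P\varepsilon,\hat\theta-\bar\theta\rangle.
\end{equation*}
The coordinates of $(D^\dagger)^\top\varepsilon$ are centered Gaussians with variance $\sigma^2\|s_j\|^2\le\sigma^2\rho^2$, so a Gaussian maximal inequality with a union bound over $m$ edges gives $\|(D^\dagger)^\top\varepsilon\|_\infty\le n\lambda$ with probability at least $1-\delta$; H\"older then converts the first piece into $2\lambda\|D(\hat\theta-\bar\theta)\|_1$. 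For the kernel piece, $\|P\varepsilon\|^2=(\bone_n^\top\varepsilon)^2/n\sim\sigma^2\chi_1^2$, hence $\|P\varepsilon\|^2\le 2\sigma^2\log(e/\delta)$ with probability at least $1-\delta$; Cauchy--Schwarz followed by AM--GM absorbs $\frac{2}{n}\langle P\varepsilon,\hat\theta-\bar\theta\rangle$ into a fraction of the free $\frac{1}{n}\|\hat\theta-\bar\theta\|^2$, leaving an additive remainder of order $\sigma^2\log(e/\delta)/n$.

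For the penalty contribution, I would split along $T$ and $T^c$: by the triangle inequality,
\begin{equation*}
\lambda\|D\bar\theta\|_1-\lambda\|D\hat\theta\|_1\le 2\lambda\|(D\bar\theta)_{T^c}\|_1+\lambda\|(D(\hat\theta-\bar\theta))_T\|_1-\lambda\|(D(\hat\theta-\bar\theta))_{T^c}\|_1,
\end{equation*}
and combining with the $2\lambda\|D(\hat\theta-\bar\theta)\|_1$ above leaves, after dropping a non-negative $T^c$ remainder, at most $3\lambda\|(D(\hat\theta-\bar\theta))_T\|_1$ plus the advertised $4\lambda\|(D\bar\theta)_{T^c}\|_1$ on the right-hand side (the slack from $2$ to $4$ is absorbed in the final constant). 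Applying the compatibility bound of Definition~\ref{def:comp-fac-rec-coeff}, $\|(D(\hat\theta-\bar\theta))_T\|_1\le\sqrt{|T|}\,\|\hat\theta-\bar\theta\|_2/\kappa_T$, and one final $2ab\le a^2+b^2$ step balances this against the remaining portion of the free $\frac{1}{n}\|\hat\theta-\bar\theta\|^2$; substituting $n\lambda=\sigma\rho\sqrt{2\log(em/\delta)}$ produces the term $\sigma^2|T|\rho^2\kappa_T^{-2}\log(em/\delta)/n$. Union-bounding over the two good events (for $(D^\dagger)^\top\varepsilon$ and for $P\varepsilon$) gives the claimed probability $1-2\delta$.

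The main obstacle is book-keeping for the sharp leading constant $1$ in front of $\frac{1}{n}\|\bar\theta-\theta^\ast\|^2$: every AM--GM remainder produced along the way must be absorbed into the free quadratic $\frac{1}{n}\|\hat\theta-\bar\theta\|^2$ from the strong-convexity step, and never into $\frac{1}{n}\|\hat\theta-\theta^\ast\|^2$, as the latter would inflate the oracle constant beyond $1$ and destroy sharpness. Choosing the AM--GM weights so that the two absorptions (from the kernel noise piece and from the compatibility step) together consume at most all of $\frac{1}{n}\|\hat\theta-\bar\theta\|^2$ is the one computation that requires a bit of care.
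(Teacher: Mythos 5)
Your architecture is the same as the paper's: the same decomposition of $\varepsilon$ along $\ker D$ and its orthogonal complement, the same two Gaussian deviation bounds, the same triangle-inequality split of the penalty over $T$ and $T^c$, the compatibility factor, and a final Young/AM--GM absorption into the free quadratic $\frac{1}{n}\|\hat\theta-\bar\theta\|^2$. But one step does not go through as written, and it is precisely where your route diverges from the paper's. Your basic inequality, obtained from strong convexity of the objective, carries the penalty difference with coefficient $\lambda$:
\begin{equation*}
\frac{1}{n}\|\hat\theta-\theta^\ast\|^2+\frac{1}{n}\|\hat\theta-\bar\theta\|^2\le\frac{1}{n}\|\bar\theta-\theta^\ast\|^2+\frac{2}{n}\langle\varepsilon,\hat\theta-\bar\theta\rangle+\lambda\bigl(\|D\bar\theta\|_1-\|D\hat\theta\|_1\bigr),
\end{equation*}
whereas H\"older applied to $\frac{2}{n}\langle\varepsilon,\hat\theta-\bar\theta\rangle$ on the good event produces $2\lambda\|D(\hat\theta-\bar\theta)\|_1$, since $\lambda$ is prescribed to equal (not to be twice) the high-probability bound on $\frac{1}{n}\|(D^\dagger)^\top\varepsilon\|_\infty$. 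Adding your penalty split to this, the $T^c$ contributions combine to $2\lambda\|(D(\hat\theta-\bar\theta))_{T^c}\|_1-\lambda\|(D(\hat\theta-\bar\theta))_{T^c}\|_1=+\lambda\|(D(\hat\theta-\bar\theta))_{T^c}\|_1$. This is a \emph{non-negative} term sitting on the right-hand side of an upper bound: it cannot be ``dropped'' (only non-positive terms can be discarded from an upper bound), it is not controlled by $\kappa_T$, which only sees coordinates in $T$, and bounding it by the full $\|D(\hat\theta-\bar\theta)\|_1$ would destroy the $|T|$-dependence of the result. The argument stalls here.

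The paper avoids the mismatch by starting not from $F(\hat\theta)\le F(\bar\theta)$ plus strong convexity but from the first-order optimality conditions, namely $\frac{1}{n}\hat\theta^\top(y-\hat\theta)=\lambda\|D\hat\theta\|_1$ and $\frac{1}{n}\bar\theta^\top(y-\hat\theta)\le\lambda\|D\bar\theta\|_1$; after polarization these yield the same two quadratics on the left but with the penalty difference multiplied by $2\lambda$. That coefficient exactly matches the $2\lambda\|D(\hat\theta-\bar\theta)\|_1$ coming from the noise, so the $T^c$ parts of $D(\hat\theta-\bar\theta)$ cancel identically and only $4\lambda\|(D\bar\theta)_{T^c}\|_1+4\lambda\|(D(\hat\theta-\bar\theta))_T\|_1$ survives. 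Alternatively you could repair your version by taking $\lambda$ to be \emph{twice} the bound on $\frac{1}{n}\|(D^\dagger)^\top\varepsilon\|_\infty$, but that proves a statement with a different $\lambda$ than the one in the theorem. The remainder of your plan --- the maximal inequality over the $m$ columns of $D^\dagger$, the $\chi^2_1$ bound for the kernel piece, the compatibility step, and the Young absorption --- coincides with the paper's proof.
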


Our proof is based on the sharp oracle inequality for the Lasso in \cite[Theorem 4.1, Corollary 4.3]{Gir14} and slightly stronger statements that appear in \cite[Theorems~3 and~4]{DalHebLed14}.
 
\begin{proof}
	We start by considering the first order optimality conditions of the convex problem \eqref{eq:y}.
 	By the chain rule for the subdifferential, \cite[Theorem 23.9]{Roc70}, the subdifferential of the \( \ell_1 \) term is
	\begin{equation}
		\label{eq:am}
		\partial \| D \theta \|_1 = D^\top \sign (D \theta),
	\end{equation}
	where
$$
\sign(x)_i = \left\{
\begin{array}{rl}
1 & \text{if}\ x_i > 0,\\
 \left[-1 , 1\right] & \text{if}\ x_i = 0,\\
-1 & \text{if}\ x_i < 0\,.
\end{array}
\right.
$$
	Therefore, for any $\bar \theta \in \R^n$,  $z \in \sign(D \hat\theta)$ we get
	\begin{equation}
		\label{eq:ao}
		\frac{1}{n} \bar{\theta}^\top (y - \hat\theta) = \lambda \bar{\theta}^\top D^\top z = \lambda (D \bar{\theta})^\top z.
	\end{equation}
	It yields
	\begin{align*}
		\frac{1}{n} \hat\theta^\top(y - \hat\theta) = {} \lambda \| D \hat\theta \|_1\quad \text{and} \quad  
		\frac{1}{n} \bar{\theta}^\top (y - \hat\theta) \leq {}  \lambda \| D \bar{\theta} \|_1. 
	\end{align*}
In turn, subtracting the above two, we get
	\begin{align}
		\label{eq:a}
		\frac{1}{n} (\bar{\theta} - \hat\theta)^\top (\theta^\ast - \hat\theta)
		\leq \frac{1}{n} \varepsilon^\top ( \hat\theta - \bar{\theta} ) + \lambda \| D\bar{\theta} \|_1 - \lambda \| D\hat\theta \|_1.
	\end{align}
Next, using  polarization, we can rewrite the above display as
	\begin{equation}
		\label{eq:ar}
		\frac{1}{n}( \| \bar{\theta} - \hat\theta \|^2 + \| \theta^\ast - \hat\theta \|^2)
		\leq \frac{1}{n} \| \bar{\theta} - \theta^\ast \|^2 + \frac{2}{n} \varepsilon^\top ( \hat\theta - \bar{\theta} ) + 2\lambda \| D\bar{\theta} \|_1 - 2\lambda \| D\hat\theta \|_1.
	\end{equation}

We first control the error term \( \varepsilon^\top ( \hat\theta - \bar{\theta} ) \) as follows.
	Let \( \Pi \) denote the projection matrix onto \( \operatorname{ker}(D) \) and remember that \( D^\dag D = (I - \Pi) \), the projection on \( \operatorname{ker}(D)^\perp \).
	Since \( \operatorname{ker}(D) = \operatorname{ker}(D^\top D) \), the kernel of the graph Laplacian, and \( G \) is connected, we have \( \operatorname{ker}(D) = \operatorname{span}(\bone_n) \)  \cite{Chu97}; in particular, \( \operatorname{dim} \operatorname{ker}(D) = 1 \).
It yields
	\begin{align}
		\varepsilon^\top(\hat\theta - \bar{\theta})
		&=(\Pi \varepsilon)^\top ( \hat\theta - \bar{\theta}) + ((I - \Pi)\varepsilon)^\top ( \hat\theta - \bar{\theta})\\
		&=(\Pi \varepsilon)^\top ( \hat\theta - \bar{\theta}) + \varepsilon^\top D^\dag D ( \hat\theta - \bar{\theta})\\
		&\leq\| \Pi \varepsilon \| \|  \widehat{\theta }- \bar{\theta} \| + \|(D^\dag)^\top \varepsilon\|_\infty \| D( \hat\theta - \bar{\theta}) \|_1\,,\label{eq:b}
	\end{align}
where in~\eqref{eq:b}, we use H\"older's inequality.

To bound the right-hand side in~\eqref{eq:b}, we first use the maximal inequality for Gaussian random variables \cite[Corollary 2.6]{BouLugMas13}:
It yields that the following two inequalities hold simultaneously on an event of probability $1-2\delta$,
	\begin{align}
		\| (D^\dag)^\top \varepsilon \|_\infty
		\leq {}  \sigma \rho \sqrt{2 \log (em/\delta)} = \lambda n\,, \qquad
		\| \Pi \varepsilon \|_2
		\leq {}  2\sigma\sqrt{2 \log(e/\delta)}\,. \label{eq:as}
	\end{align}
Next, note that by the triangle inequality we have
\begin{equation}
	\label{eq:r}
	\| D( \hat\theta - \bar{\theta}) \|_1 + \| D \bar{\theta} \|_1 - \| D \hat\theta \|_1
	\leq 2 \| (D (\hat\theta - \bar{\theta}))_T \|_1 + 2\| (D \bar{\theta})_{T^c} \|_1.
\end{equation}
Moreover, $\| D ( \hat\theta - \bar{\theta} )_T \|_1 \leq \kappa_T^{-1} \sqrt{|T|} \| \hat\theta - \bar{\theta} \|$. Together with~\eqref{eq:ar}--\eqref{eq:r}, it yields
\begin{align*}
	\frac{1}{n}( \| \bar{\theta} - \hat\theta \|^2 + \| \theta^\ast - \hat\theta \|^2)
	\le {} & \frac{1}{n} \| \bar{\theta} - \theta^\ast \|^2 + 4 \lambda \| (D\bar{\theta})_{T^c} \|_1\\
	&+ \frac{4}{n} \| \hat\theta - \bar{\theta} \| \left(\sigma \sqrt{2 \log \big(e/\delta\big)} +  n \frac{\lambda}{\kappa_T} \sqrt{|T|} \right).
\end{align*}
To conclude the proof, we apply Young's inequality to  produce \( \frac{1}{n} \| \hat\theta - \bar{\theta} \|^2 \) which cancels out.
\end{proof}

\subsection{Control of the inverse scaling factor for the 2D grid}
\label{proof:2d}
In this subsection, we prove Proposition~\ref{prp:tv-cols-2d} that we recall here for convenience.

\begin{proposition*}
The incidence matrix \( D_2 \) of the 2D grid on $n$ vertices has inverse scaling factor $\rho\lesssim\sqrt{\log n}$.
\end{proposition*}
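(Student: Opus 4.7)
The strategy is to diagonalize the 2D Laplacian using the Kronecker product structure and then bound each column of $D_2^\dagger$ by splitting the eigenvector contributions into a ``large eigenvector'' part and a ``difference of eigenvector'' part.

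First, I would set up the spectral decomposition. Using $D_2 = [D_1 \otimes I;\, I \otimes D_1]$, the unnormalized Laplacian of the 2D grid factors as $L_2 := D_2^\top D_2 = L_1 \otimes I + I \otimes L_1$, where $L_1 = D_1^\top D_1$ is the path Laplacian. Hence, if $L_1 = V \Lambda V^\top$ with $V = [v_0, \dots, v_{N-1}]$ and $\Lambda = \diag(\lambda_0, \dots, \lambda_{N-1})$, the eigenpairs of $L_2$ are $\{(\lambda_{k_1} + \lambda_{k_2},\, v_{k_1} \otimes v_{k_2})\}_{\bm{k} \in [0, N\llbracket^2}$ with $\lambda_0 = 0$ and $\lambda_k = 4 \sin^2\!\left(\frac{\pi k}{2N}\right)$, and eigenvectors $v_k(i) = \sqrt{2/N} \cos(\pi k (i-1/2)/N)$ for $k \geq 1$.

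Next, I would fix an edge $e$ and write $\|s_e\|_2^2 = \sum_{\bm{k} \ne \bm 0} (\lambda_{k_1} + \lambda_{k_2})^{-2} \langle v_{k_1} \otimes v_{k_2},\, d_e \rangle^2$, using $s_e = L_2^\dagger d_e$ where $d_e$ is the $e$th column of $D_2^\top$. By symmetry, it suffices to consider a ``horizontal'' edge, for which $d_e = d_a^{(1)} \otimes e_b$ with $d_a^{(1)}$ a column of $D_1^\top$ and $e_b$ a standard basis vector; the inner product then factors as $\langle v_{k_1}, d_a^{(1)} \rangle \langle v_{k_2}, e_b \rangle$. The key pointwise bounds I would establish (directly from the cosine formula for $v_k$) are the delocalization estimates
\begin{equation}
  |\langle v_{k_2}, e_b \rangle|^2 \leq \tfrac{2}{N}, \qquad |\langle v_{k_1}, d_a^{(1)} \rangle|^2 \leq \tfrac{2 \lambda_{k_1}}{N},
\end{equation}
where the second follows from $(D_1 v_{k_1})_a = -\sqrt{2/N}\,\cdot 2\sin(\pi k_1 a/N)\sin(\pi k_1/(2N))$ and $\lambda_{k_1} = 4\sin^2(\pi k_1/(2N))$.

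Plugging these in yields
\begin{equation}
  \|s_e\|_2^2 \leq \frac{4}{N^2} \sum_{\bm k \ne \bm 0} \frac{\lambda_{k_1}}{(\lambda_{k_1} + \lambda_{k_2})^2} \leq \frac{4}{N^2} \sum_{\bm k \ne \bm 0} \frac{1}{\lambda_{k_1} + \lambda_{k_2}}.
\end{equation}
The final and main step is to bound this sum. Using $\lambda_k \asymp k^2/N^2$ for $k \in \llbracket 0, N/2 \rrbracket$ (and symmetry $\lambda_k = \lambda_{N-k}$ up to constants at the other end, handled by a change of variable), I would compare the discrete sum with the integral $\int_{\|\bm k\| \geq 1,\, \bm k \in [0,N]^2} \frac{d\bm k}{\|\bm k\|^2}$ in polar coordinates, which is of order $\log N$. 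This gives $\|s_e\|_2^2 \lesssim \log N \asymp \log n$, and the same bound holds uniformly for vertical edges by a symmetric argument, proving $\rho \lesssim \sqrt{\log n}$.

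The main technical obstacle is the double sum $\sum_{\bm k \ne \bm 0} (\lambda_{k_1} + \lambda_{k_2})^{-1}$: one must be careful about both small and large $k$ regimes, and in particular about the ``anisotropic'' contribution near the axes (e.g., $k_2 = 0$), which is precisely where the extra logarithm comes from and which is absent in $d \geq 3$.
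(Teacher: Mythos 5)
Your proposal is correct and follows essentially the same route as the paper: Kronecker diagonalization of the grid Laplacian in the DCT basis, the delocalization bounds \( \langle v_{k}, e_b\rangle^2 \le 2/N \) and \( \langle v_{k}, d_a\rangle^2 \lesssim k^2/N^3 \), and comparison of the resulting double sum with \( \sum_{\bm{k}\neq \bm{0}} \|\bm{k}\|^{-2} \asymp \log N \). Your variant bound \( \langle v_{k_1}, d_a\rangle^2 \le 2\lambda_{k_1}/N \) is a slightly cleaner packaging that cancels one power of \( \lambda_{k_1}+\lambda_{k_2} \) and avoids the paper's case split on \( k\vee l \); the only (harmless) slip is the claimed symmetry \( \lambda_k=\lambda_{N-k} \), which holds for the cycle but not the path --- fortunately \( \lambda_k \asymp k^2/N^2 \) holds for all \( k \in \llbracket 0, N\llbracket \) since \( \sin y \ge 2y/\pi \) on \( [0,\pi/2] \), so no symmetry is needed.
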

\begin{proof}

Note first that $S = D_2^\dag = (D_2^\top D_2)^\dag D_2^\top$. Moreover,  the matrix \( D_2^\top D_2 \) can be expressed in terms of $D_1^\top D_1$ as
	\begin{equation}
		\label{eq:g}
		D_2^\top D_2 =
		\begin{bmatrix}
			D_1^\top \otimes I & I \otimes D_1^\top
		\end{bmatrix}
		\begin{bmatrix}
			D_1 \otimes I\\
			I \otimes D_1
		\end{bmatrix}
		= D_1^\top D_1 \otimes I + I \otimes D_1^\top D_1.
	\end{equation}
It follows from~\cite[Chapter 1.5]{Str07} that the unnormalized Laplacian $D_1^\top D_1$ of the path graph admits the following spectral decomposition
\begin{equation}
\label{EQ:toeplitzmat}
D_1^\top D_1 = 
		\begin{bmatrix}
			1 & -1 & 0 & 0 & \dots & 0\\
			-1 & 2 & -1 & 0 & \dots & 0\\
			0 & -1 & 2 & -1 & \dots & 0\\
			\vdots & & \ddots & & & \vdots \\
			\vdots & & & -1 & 2 & -1\\
			0 & & \dots & 0 & -1 & 1
		\end{bmatrix}
		= V_1 \Lambda_1 V_1^\top
\end{equation}
where $\Lambda_1=\operatorname{diag}(\lambda_0, \dots, \lambda_{N-1})$, with
	\begin{equation}
		\label{eq:k}
		\lambda_k = 2 - 2 \cos \frac{k \pi}{N}\,, \quad k \in \llbracket0, N \llbracket\,,
	\end{equation}
	and $V_1=[v_0, \ldots, v_{N-1}]$  is the discrete Fourier transform {\sc Dct-2} on $\R^N$ so that each eigenvector $v_k \in \R^N$ has coordinates
	\begin{align}
		(v_0)_j = {} & \frac{1}{N}, \quad j \in \llbracket 0, N \llbracket\\
		(v_k)_j = {} & \sqrt{\frac{2}{ N }} \cos \left(\frac{(j + 1/2)k\pi}{ N }\right)\,, \quad j \in \llbracket0, N \llbracket, \, k \in \llbracket 1, N \llbracket\,.
	\end{align}
Therefore, $D_2^\top D_2 =V_2\Lambda_2V_2^\top$\,, where   \( \Lambda_2 = \Lambda_1 \otimes I + I \otimes \Lambda_1 \) and \( V_2 = V_1 \otimes V_1 \)\,.

As a result,  $S$ has $2N (N-1)$ columns and can be written as
$$
S=D_2^\dag=V_2\Lambda_2^{\dag} V_2^\top 
\begin{bmatrix}
	D_1^\top \otimes I & I \otimes D_1^\top 
\end{bmatrix}
= [(s_{i, j}^{(1)})_{\subalign{&i \in [N-1]\\&j \in [N]}}, (s_{i, j}^{(2)})_{\subalign{&i \in [N]\\&j \in [N-1]}}].
$$
Write $D_1^\top = [d_1, \dots, d_{N-1}]$ and note that the columns of $S$ have norm given for  $\diamond \in \{1,2\}$ by
	\begin{align}
		\label{eq:l}
		\| s_{i, j}^{(\diamond)} \|_2^2
		&= \sum_{\substack{k,l = 0\\(k,l) \neq (0,0)}}^{N-1} \frac{1}{(\lambda_k + \lambda_l)^2} \langle v_{k} \otimes v_l , d_i \otimes e_j \rangle^2\\
		&= \sum_{\substack{k,l = 0\\(k,l) \neq (0,0)}}^{N-1} \frac{1}{(4 - 2\cos \frac{k \pi}{ N } - 2 \cos \frac{l \pi}{ N })^2} \langle v_k , d_i \rangle^2 \langle v_l , e_j \rangle^2\,,
	\end{align}
%
where $e_0,\ldots, e_{N-1}$ are the vectors of the canonical basis of $\R^N$. Next, note that
	\begin{equation}
		\label{eq:cd}
		\langle v_l , d_i \rangle^2 = \frac{2}{N} \left( \cos \frac{l \pi(i + 3/2) }{N} - \cos \frac{l \pi(i + 1/2)}{N}  \right)^2
		\le  \frac{2l^2 \pi^2}{N^3}\,, 
	\end{equation}
because $x \mapsto \cos x$ is 1-Lipschitz. 
Moreover, we have that  $\langle v_k , e_j \rangle^2 \le 2/N$.
%

	It remains to bound the sum. To that end, observe that  \( 2 - 2 \cos x  \geq x^2/2 \) for any \( x \in [0,1/2] \) and  \( 2 - 2 \cos x \geq  0.1 \), for \( x \in [1/2, \pi] \).
	Hence, we can split the sum into to parts to get
	\begin{align}
		\label{eq:n}
		\left\|s_{i, j}^{(\diamond)}\right\|_2^2 \leq {} & \frac{4\pi^2}{N^4} \smashoperator{\sum_{\substack{k,l = 0\\(k,l) \neq (0,0)}}^{ N -1}} \frac{l^2}{(4 - 2 \cos \frac{k \pi}{ N } - 2 \cos \frac{l \pi}{ N })^2}\\
				\leq {} & \frac{4\pi^2}{N^4} \smashoperator{\sum_{\substack{k,l = 0\\(k,l) \neq (0,0)}}^{ N-1}} \frac{l^2}{(4 - 2 \cos \frac{k \pi}{ N } - 2 \cos \frac{l \pi}{ N })^2}
				\big[\1_{\{\frac{2\pi}{N}(k\vee l) \le 1\}}+ \1_{(\frac{2\pi}{N}\{k\vee l) > 1\}}\big]\\
		\leq {} & 16 \smashoperator{\sum_{\substack{k,l = 0\\(k,l) \neq (0,0)}}^{ N-1}} \frac{l^2}{(k^2 + l^2)^2} + \frac{400\pi^2}{N^3} \sum_{k = 0}^{ N-1} k^2
		\lesssim \sum_{\substack{k,l = 1}}^{ N-1} \frac{l^2}{(k^2 + l^2)^2}  + 1\,.
	\end{align}
Using a comparison between series and integral, noting that \( x \to x^2/(k^2 + x^2)^2 \) is increasing on \( [0,k^2] \) and decreasing on \( [k^2, \infty) \), it is immediate that
	\begin{align}
		\label{eq:ca}
	\sum_{\substack{k,l = 1}}^{ N-1} \frac{l^2}{(k^2 + l^2)^2}\le \sum_{k = 1}^{ N-1} \frac{1}{k} \int_0^\infty \frac{x^2}{(1 + x^2)^2} \diff x + \sum_{k=1}^{N-1} \frac{1}{4 k^2}
		\lesssim  \sum_{k = 1}^{ N } \frac{1}{k} + 1
		\lesssim \log N.
	\end{align}
To conclude the proof, observe that $n=N^2$.
\end{proof}

\subsection{Control of the inverse scaling factor for high-dimensional grids}
\label{proof:3d}
In this subsection, we prove Proposition~\ref{prp:tv-cols-dd} that we recall here for convenience.

\begin{proposition*}
	For the incidence matrix of the regular grid on \( N^d \) nodes in \( d \) dimensions, \( \rho \leq C(d) \), for some \( C(d) > 0 \).
\end{proposition*}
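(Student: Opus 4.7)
The plan is to extend the 2D computation of Proposition~\ref{prp:tv-cols-2d} to general $d$, exploiting the observation that the associated radial integral in $d$ dimensions becomes polynomially (rather than logarithmically) bounded once $d\ge 3$. Writing $L_1 := D_1^\top D_1$, the tensor structure in \eqref{eq:bf} gives
\begin{equation*}
D_d^\top D_d = \sum_{j=1}^{d} I \otimes \cdots \otimes L_1 \otimes \cdots \otimes I,
\end{equation*}
with $L_1$ in the $j$-th slot. These $d$ summands commute, so they share the Kronecker eigenbasis $V_d = V_1^{\otimes d}$ with eigenvalues $\lambda_{k_1} + \cdots + \lambda_{k_d}$ indexed by $\bm{k} \in \llbracket 0, N \llbracket^d$, where $\lambda_k$ and $v_k$ are recalled in the proof of Proposition~\ref{prp:tv-cols-2d}.

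Indexing the columns of $S = D_d^\dagger$ by a direction $\diamond \in [d]$ and a multi-index $\bm{i}$, the same manipulation that led to \eqref{eq:l} now yields
\begin{equation*}
\|s^{(\diamond)}_{\bm{i}}\|_2^2 = \sum_{\bm{k}\neq 0} \frac{\langle v_{k_\diamond}, d_{i_\diamond}\rangle^2 \prod_{j\neq \diamond}\langle v_{k_j}, e_{i_j}\rangle^2}{(\lambda_{k_1}+\cdots+\lambda_{k_d})^2}.
\end{equation*}
Next I would insert the pointwise bounds $\langle v_{k_j}, e_{i_j}\rangle^2 \le 2/N$ and $\langle v_{k_\diamond}, d_{i_\diamond}\rangle^2 \le 2\pi^2 k_\diamond^2/N^3$ already established in the 2D proof (note that the $k_\diamond=0$ terms vanish, so we may restrict to $k_\diamond\ge 1$), yielding
\begin{equation*}
\|s^{(\diamond)}_{\bm{i}}\|_2^2 \le \frac{2^d \pi^2}{N^{d+2}} \sum_{\bm{k}\neq 0} \frac{k_\diamond^2}{(\lambda_{k_1}+\cdots+\lambda_{k_d})^2}.
\end{equation*}

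To bound the remaining sum by a constant multiple (depending on $d$) of $N^{d+2}$, I would split it according to whether $\max_j k_j \le N/2$. On this ``bulk'' region, the inequality $\lambda_k \ge 4 k^2/N^2$ (valid for $k \le N/2$) yields $\sum_j \lambda_{k_j} \ge 4\|\bm{k}\|_2^2/N^2$, so the bulk contribution is controlled by a constant multiple of $N^{4-(d+2)}\sum_{\bm{k} \in [N]^d} k_\diamond^2/\|\bm{k}\|_2^4$. Comparing this with $\int_{[1,N]^d} x_\diamond^2/\|\bm{x}\|_2^4 \diff \bm{x}$ in spherical coordinates reduces the radial part to $\int_1^N r^{d-3} \diff r \lesssim N^{d-2}$, cancelling the $N^{2-d}$ prefactor and yielding $O(1)$. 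On the complementary ``tail'' region, for any coordinate $j$ with $k_j > N/2$ one has $\lambda_{k_j}\gtrsim 1$, and a direct count gives $\sum_{\bm{k}:\, \max_j k_j > N/2} k_\diamond^2 \lesssim N^{d+2}$, again $O(1)$ after absorbing the prefactor.

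The main obstacle is the bookkeeping of dimension-dependent constants coming from the spherical-coordinates step, in particular the surface area of $S^{d-1}$ and the angular factor $\int_{S^{d-1}} u_\diamond^2\, \diff \sigma$. Since the statement only requires a constant $C(d)$, this is careful accounting rather than a genuine difficulty. The key qualitative departure from Proposition~\ref{prp:tv-cols-2d} is precisely that the radial integral $\int_1^N r^{d-3} \diff r$ is polynomially bounded in $N$ (rather than logarithmically divergent as in the 2D case), which is what allows $\rho$ to be bounded uniformly in $N$ once $d \ge 3$.
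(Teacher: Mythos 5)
Your proposal is correct and follows essentially the same route as the paper: the Kronecker eigendecomposition with eigenvalues $\lambda_{k_1}+\cdots+\lambda_{k_d}$, the pointwise bounds $\langle v_{k_j}, e_{i_j}\rangle^2 \le 2/N$ and $\langle v_{k_\diamond}, d_{i_\diamond}\rangle^2 \lesssim k_\diamond^2/N^3$, a bulk/tail split of the resulting lattice sum according to the size of the $k_j$, and an integral comparison whose radial part $\int_1^N r^{d-3}\,\diff r \lesssim N^{d-2}$ cancels the prefactor (the paper integrates out the $\diamond$-coordinate first and then reduces to a two-dimensional polar integral, but this is the same computation). The one point you gloss over is the multi-indices $\bm{k}\neq 0$ with some $k_j=0$ for $j\neq\diamond$, which your integral comparison over $[1,N]^d$ does not cover; the paper dispatches these by induction on the dimension (they form lower-dimensional copies of the same sum and are of lower order), and your argument needs the same short remark.
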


\begin{proof}
	Similarly to the proof of Proposition \ref{prp:tv-cols-2d}, \( (D_d^\top D_d)^\dag \) admits an eigendecomposition of the form \( \Lambda_d = \Lambda_1 \otimes I \otimes \dots \otimes I + \dots + I \otimes I \otimes \dots \otimes \Lambda_1 \), \( V_d = V_1^{\otimes d} \).
	Keeping the same notation as in the preceding proof,
	\begin{equation}
		\label{eq:bi}
		S = D_d^\dag = [(s_{\bm{i}}^{(j)})_{ i_j \in [N-1], \, i_k \in [N],} \text{ for } k \neq j, \, j \in [d]],
	\end{equation}
	we have
	\begin{align}
		\label{eq:bh}
		\leadeq{\| s_{ \bm{i}}^{(1)} \|_2^2
		= \sum_{\substack{k_l = 0, \, \bm{k} \neq 0\\l = 1,\dots,d}}^{N-1} \left(\sum_{j=1}^{d} \lambda_{k_j}\right)^{-2} \langle v_{k_1} , d_{i_1} \rangle^2 \prod_{j = 2}^{d} \langle v_{k_j} , e_{i_j} \rangle^2}\\
		= {} & \sum_{\substack{k_l = 0, \, \bm{k} \neq 0\\l = 1,\dots,d}}^{N-1} \left(\sum_{j=1}^{d} \left(2 - 2 \cos \frac{k_j \pi}{N}\right)\right)^{-2} \langle v_{k_1} , d_{i_1} \rangle^2 \prod_{j = 2}^{d} \langle v_{k_j}, e_{i_j} \rangle^2
	\end{align}
	and by symmetry, this case is enough to deduce the claim for an arbitrary \( s_{ \bm{i}, j} \), \( j \in [d] \).
	Observing again that 
	\begin{equation}
		\label{eq:bk}
		\|v_{k_j} \|_\infty \leq \sqrt{2/N}, \quad \langle v_{k_j} , e_{i_j} \rangle^2 \leq 2 / N,
	\end{equation}
	and
	\begin{equation}
		\label{eq:cg}
		\langle v_{k_1} , d_{i_1} \rangle^2 \leq \frac{2 k_1^2}{N^3},
	\end{equation}
	it remains to bound the sum above.

	For this, use the same bounds on the cosine function to split it up into a part bounded by a constant and one that behaves like a square:
  \begin{align}
		\| s_{ \bm{i}}^{(1)} \|^2 \leq {} & \frac{2^d}{N^{d+2}}\sum_{\substack{ k_l = 0, \, \bm{k} \neq 0\\l=1,\dots,d}}^{N-1} k_1^2 \left( 2d - 2 \sum_{j=1}^{d} \cos \frac{k_j \pi}{N} \right)^{-2}\\
		\lesssim {} & \frac{2^d}{N^{d-2}} \sum_{{\substack{k_l = 0\\ \bm{k} \neq 0}}}^{ N-1 } k_1^2 \left( \sum_{j=1}^{d} k_j^2 \right)^{-2} + 1 \label{eq:bj}
	\end{align}
	We again want to exclude all indices having a zero element.
	This amounts to finding a bound of the order \( o(N^{d+2}) \) for the same sum in one dimension less than we are considering here, times \( d \) for each coordinate that can be zero.
	In order to achieve this, we argue by induction: in \( d=3 \) dimensions, the corresponding summation runs over two indices and has been shown to be of order \( O(\log n) = o(N) \) in the proof of Proposition \ref{prp:tv-cols-2d}, so the base case is valid.
	The following analysis will show that the whole sum is \( O(N^{d+2}) \) for \( d \geq 3 \), which is the induction step.
	This means we can assume
	\begin{align}
		\label{eq:cb}
		\| s_{ \bm{i}}^{(1)} \|^2 \leq {} & \frac{2^d}{N^{d-2}} \sum_{{\substack{k_l = 1\\l = 1,\dots,d}}}^{ N-1}  k_1^2 \Big(\sum_{j=1}^{d} k_j^2\Big)^{\mathrlap{-2}} + o(d)
		\lesssim {}  \frac{2^d}{N^{d-2}} \sum_{{\substack{k_l = 1\\l = 1,\dots,d}}}^{ N-1} k_1^2 \Big( \sum_{j=1}^{d} k_j^2 \Big)^{-2} + 1.
	\end{align}
Next, observe that \( \int_{0}^{\infty} x^2(1+x^2)^{-2} \diff x \lesssim 1 \). It yields
\begin{align}
	\leadeq{\sum_{{\substack{k_l = 1\\l=1, \dots, d}}}^{ N-1} k_1^2 \Big( \sum_{j=1}^{d} k_j^2 \Big)^{-2}}\\
	&\leq  \frac{2^d}{N^{d-2}} \sum_{{\substack{k_l = 1\\l=2, \dots, d}}}^{ N-1} \smashoperator{\int_{0}^{ \infty}} x^2 \Big( x^2 + \sum_{j=2}^{d} k_j^2 \Big)^{-2} \diff x + \frac{2^d}{N^{d-2}} \smashoperator{\sum_{\substack{k_j = 1\\j = 2, \dots, d}}^{N-1}} \Big( \sum_{j=2}^{d} k_j^2 \Big)^{-1}\\
	&= \frac{2^d}{N^{d-2}} \smashoperator{\sum_{{\substack{k_j = 1\\j=2, \dots, d}}}^{ N-1}} \Big(\sum_{j=2}^{d} k_j^2\Big)^{-1/2} \smashoperator{\int_{0}^{ \infty}} \frac{y^2}{ (y^2 + 1)^2 } \diff x + \frac{2^d}{N^{d-2}} \smashoperator{\sum_{\substack{k_j = 1\\j = 2, \dots, d}}^{N-1}} \Big( \sum_{j=2}^{d} k_j^2 \Big)^{-1}\\
	&\lesssim  \frac{2^d}{N^{d-2}} \smashoperator{\sum_{{\substack{k_l = 1\\l=2, \dots, d}}}^{ N-1}} \Big(\sum_{j=2}^{d} k_j^2\Big)^{-1/2}.
	\end{align}
Next, bounded the series by an integral together with a change to polar coordinates, we get
		\begin{align}
\frac{2^d}{N^{d-2}} \sum_{\substack{k_l = 1\\ l = 2,\dots,d}}^{ N-1} \Big( \sum_{j=2}^{d} k_j^2 \Big)^{-1/2}
&\le  \frac{2^d}{N^{d-2}}\int_{\left\{\substack{0 \leq x_j \leq N, \; j = 1,\dots,d-1}\right\}} \frac{1}{\|x\|_2} \diff x\\
&\le \frac{2^d}{N}\int_0^N \int_0^N\frac{1}{\sqrt{x^2+y^2}} \diff x \diff y\\
&=2^{d}\log (3 +2\sqrt{2}) \le 2^d. 
\end{align}
%
%
\end{proof} 


\subsection{Control of the inverse scaling factor for \( C^k_n \)}
\label{sec:proof-power-graph}

In this subsection, we prove Proposition \ref{prp:power-graph} that we recall here for convenience.

\begin{proposition*}
	For \( G = C_n^k \) where \( k \leq n/2 \),  \( \rho \lesssim \sqrt{n}/k^3 + 1 \) and \( \kappa \gtrsim 1/\sqrt{k} \).
\end{proposition*}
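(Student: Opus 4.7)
The bound on $\kappa$ is immediate from Lemma~\ref{lem:a}: since $C_n^k$ is $2k$-regular (for $k\le n/2$), the maximum degree is $d=2k$, so $\kappa_T \gtrsim 1/\sqrt{\min(|T|,2k)} \gtrsim 1/\sqrt{k}$, and in particular $\kappa \gtrsim 1/\sqrt{k}$.

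For the bound on $\rho$, the plan is to exploit the fact that $L=D^\top D$ is circulant, so it is diagonalized by the discrete Fourier basis $v_l \in \mathbb{C}^n$, $(v_l)_p = e^{2\pi i l p/n}/\sqrt{n}$, with eigenvalues
\begin{equation*}
\lambda_l = 2k - 2\sum_{m=1}^{k}\cos\!\Big(\tfrac{2\pi l m}{n}\Big), \qquad l\in\llbracket 0,n\llbracket.
\end{equation*}
Because the graph is vertex-transitive, $\|s_j\|_2$ depends only on the edge length $m_j\in[k]$ of the edge corresponding to column $j$; writing $d_j = e_a - e_{a+m_j}$, one computes
\begin{equation*}
|\langle v_l, d_j\rangle|^2 = \tfrac{2}{n}\bigl(1-\cos(2\pi l m_j/n)\bigr)
\le \min\!\Big\{\tfrac{4}{n},\; \tfrac{2\pi^2 l^2 m_j^2}{n^3}\Big\},
\end{equation*}
and $\|s_j\|_2^2 = \sum_{l\neq 0} |\langle v_l,d_j\rangle|^2/\lambda_l^2$. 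The strategy is then to split this sum into two frequency regimes and combine sharp bounds on $\lambda_l$ in each.

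In the low-frequency regime $\{l : \min(l,n-l) \le n/(2k)\}$ all the cosine arguments lie in $[0,\pi]$, so applying $1-\cos x \ge 2x^2/\pi^2$ termwise yields
\begin{equation*}
\lambda_l \;\gtrsim\; \frac{l^2}{n^2}\sum_{m=1}^{k} m^2 \;\asymp\; \frac{l^2 k^3}{n^2}.
\end{equation*}
In the complementary high-frequency regime, the Dirichlet-kernel identity $\sum_{m=1}^{k}\cos(m\theta) = \sin(k\theta/2)\cos((k+1)\theta/2)/\sin(\theta/2)$ gives $|\sum_m\cos(2\pi lm/n)|\le 1/|\sin(\pi l/n)|$, and as soon as $\min(l,n-l)\ge n/(\pi k)$ this is at most $k/2$, so $\lambda_l \gtrsim k$. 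Plugging these lower bounds into the two regimes and using the $l^2m_j^2/n^3$ upper bound at low frequencies, the $4/n$ upper bound at high frequencies, the contributions come out as $\sum_{l\le n/(2k)} m_j^2 n/(l^2 k^6) \lesssim m_j^2 n/k^6$ and $\sum_{l\gtrsim n/k}^{n/2} 1/(nk^2)\lesssim 1/k^2$, respectively.

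The main obstacle—and where I would spend the bulk of the work—is turning this sketch into an $m_j$-independent bound of the claimed order $\sqrt n/k^3+1$ rather than the crude $m_j\sqrt n/k^3+1/k$ that follows from plugging in $m_j\le k$ naively. The idea is to use the decomposition $d_j = \sum_{i=1}^{m_j} d_i'$ of a length-$m_j$ edge into length-$1$ edges together with the translation invariance of the problem to control the contributions coherently; more precisely, for low frequencies the phase $e^{2\pi i l i/n}$ across $i\in[m_j]$ essentially cancels because $2\pi l m_j/n$ is small, which is exactly the gain that produces the additional factor of $m_j$ already captured in $|\langle v_l,d_j\rangle|^2\lesssim l^2 m_j^2/n^3$, while at intermediate $l$ one should instead use the trivial bound $|\langle v_l,d_j\rangle|^2\le 4/n$ once $l m_j\gtrsim n$ so that the effective summation range in the first regime becomes $l\le n/(m_j\vee k)$. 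Carrying this out carefully and adding the two contributions should yield $\|s_j\|_2^2 \lesssim n/k^6 + 1$ uniformly in $m_j\in[k]$, which gives $\rho \lesssim \sqrt n/k^3 + 1$ as claimed.
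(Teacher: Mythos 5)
Your treatment of $\kappa$ is exactly the paper's (Lemma~\ref{lem:a} plus maximal degree $2k$), and your Fourier setup is also the paper's: circulant Laplacian diagonalized by the discrete Fourier basis, eigenvalues $\lambda_l = 2\sum_{m=1}^{k}(1-\cos(2\pi lm/n))$, and a frequency split with $\lambda_l\gtrsim l^2k^3/n^2$ at low frequencies and $\lambda_l\gtrsim k$ (the paper only uses $\gtrsim 1$) at high frequencies. The one place you genuinely depart from the paper is that you keep track of the edge length $m_j$ in $|\langle v_l,d_j\rangle|^2\le 2\pi^2 l^2 m_j^2/n^3$, whereas the paper's displayed computation of $\langle v_m, d_j\rangle$ silently treats every edge as a unit edge, i.e.\ $d_j=e_{j+1}-e_j$. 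You are right that this is where the difficulty sits.

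The gap is in your final paragraph: the $m_j$-independent bound $\|s_j\|_2^2\lesssim n/k^6+1$ that you hope to extract by ``carrying this out carefully'' does not exist. For a length-$k$ edge the single term $l=1$ already contributes
\begin{equation*}
\frac{|\langle v_1,d_j\rangle|^2}{\lambda_1^2}
=\frac{\frac{2}{n}\bigl(1-\cos(2\pi k/n)\bigr)}{\Bigl(2\sum_{m=1}^{k}\bigl(1-\cos(2\pi m/n)\bigr)\Bigr)^{2}}
\;\asymp\;\frac{k^2/n^3}{\bigl(k^3/n^2\bigr)^2}\;=\;\frac{n}{k^4}
\end{equation*}
for $k=o(n)$, and both numerator and denominator are tight up to constants here ($1-\cos x\asymp x^2$ on the relevant range), so there is no further coherent cancellation to exploit: the factor $m_j^2$ produced by decomposing $d_j$ into unit edges is real, not an artifact of the estimate. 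The best uniform bound your route can give is therefore $\|s_j\|_2^2\lesssim m_j^2 n/k^6+1\le n/k^4+1$, i.e.\ $\rho\lesssim\sqrt{n}/k^2+1$, and the displayed $l=1$ term shows this is sharp for the longest edges. In other words, you have actually uncovered an issue with the proposition as stated (and with the paper's proof, which only verifies the unit-edge columns of $D^\dagger$): for $1\ll k\ll n^{1/4}$ the maximum of $\|s_j\|_2$ over all edges is of order $\sqrt{n}/k^2$, not $\sqrt{n}/k^3+1$. Your argument can be completed, but only to the weaker conclusion $\rho\lesssim\sqrt{n}/k^2+1$ (or to the stated bound restricted to unit-length edges); as written, the last step would fail.
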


\begin{proof}
	The bound on \( \kappa \) follows from Lemma \ref{lem:a} and the fact that the degree of \( C_n^k \) is bounded by \( 2k \).

	To bound \( \rho \), write \( D^\dag = [s_1, \dots, s_m] = (D^\top D)^\dag D^\top \) and \( D^\top = [d_1, \dots, d_m] \) and use the same technique and notation as in the proof of Proposition \ref{prp:tv-cols-2d} in Subsection \ref{proof:2d}.
	The Laplacian of \( C_n^k \) has the form of a circulant matrix whose first row is
	\begin{equation}
		\label{eq:da}
		a = \begin{bmatrix} 2k & \undermat{k \text{ times}}{-1 & \dots & -1} & 0 & \dots & 0 & \undermat{k \text{ times}}{-1 & \dots & -1} \end{bmatrix}\,.
		\rule[\dimexpr-4ex]{0pt}{4ex + \baselineskip}
	\end{equation}
	Hence, we can choose the discrete Fourier basis \( (v_m)_j = \exp(2 \pi \ic m j/n)\), \(m,j \in \llbracket 0, n \llbracket \) as an eigenbasis.
	The eigenvalues are given by
	\begin{equation}
		\label{eq:cw}
		\lambda_m = \sum_{l = 0}^{n - 1} e^{2 \pi \ic m l/n} a_l
		=  2 \sum_{l = 1}^{k} \left( 1 - \cos \left( \frac{2 \pi l m}{n} \right) \right)\,.
	\end{equation}
	By the formula for the sums of squares,
	\begin{align}
		\label{eq:cx}
		\sum_{l = 1}^{k} l^2 = \frac{1}{6} k (k+1) (2k + 1) \geq \frac{1}{3} k^3\,,
	\end{align}
	and using the same estimates for the cosine as in Subsection \ref{proof:2d}, \( 2 - 2 \cos x \geq x^2/2 \) for \( x \in [0, 1/2] \), and \( 2 - 2 \cos x \geq 0.1 \) for \( x \in [1/2, \pi] \), we see that for \( 2 \pi l m/n \geq 1/2 \),
	\begin{equation}
		\label{eq:cy}
		2 \sum_{l = 1}^{k} \left( 1 - \cos \left( \frac{2 \pi l m}{n} \right) \right)
		\geq \frac{1}{2} \sum_{l=1}^{k} \left( \frac{2 \pi l m}{n} \right)^2 \geq \frac{k^3}{6} \left( \frac{2 \pi m}{n} \right)^2\,.
	\end{equation}
	Moreover, by the Lipschitz continuity of the exponential,
	\begin{equation}
		\label{eq:cz}
		| \langle v_m , d_j \rangle|^2 = \frac{1}{n} \left| e^{2 \pi \ic m (j + 1)/n} - e^{2 \pi \ic m j/n}\right|^2
		\le \frac{4 m^2 \pi^2}{n^3}\,. 
	\end{equation}
	By expressing the norm of the columns of \( D^\dag \) in terms of the eigendecomposition and combining pairs eigenvalues with the same value which have the same eigenvectors up to a sign in the exponential, we finally get
	\begin{align}
		\label{eq:cv}
		\| s_{j} \|_2^2
		= {} & \sum_{m = 1}^{n-1} \frac{1}{\lambda_m^2} \langle v_m , d_j \rangle^2\\
		\leq {} & 8 \frac{\pi^2}{n^3} \sum_{m = 1}^{\lceil (n-1)/2 \rceil} m^2 \left(2 \sum_{l = 1}^{k} 
			\left( 1 - \cos \left( \frac{2 \pi l m}{n} \right) \right)\right)^{-2}\\
		\lesssim {} & n \sum_{m=1}^{ \lceil n-1/(8 \pi k) \rceil} \frac{1}{m^2 k^6} + \frac{1}{n^3} \sum_{m=1}^{n} m^2
		\lesssim \frac{n}{k^3} + 1 \lesssim \frac{n}{k^6} + 1\,.
	\end{align}
\end{proof}

\subsection{Estimation rate for H\"older functions}
\label{sec:proof-holder-fcns}

In this subsection, we prove Proposition \ref{prp:holder-fcns} that we recall here for convenience.

\begin{proposition*}
	Fix \( \delta \in (0,1) \), \( d \geq 2 \), \( L > 0 \), $N \ge 1$, $n=N^d$ and \( \alpha \in (0,1] \) and let \( y \) be sampled according to the Gaussian sequence model \eqref{eq:al}, where \(\theta^*_{ \bm{i}} =  f^*(x_{\bm{i}}) \), \( \bm{i} \in [N]^d \) for some unknown function $f^*:[0,1]^d \to \R$.
	There exist positive constants $c$, $C$ and \( C' = C'(\sigma, L, d) \) such that the following holds.
Let $\hat \theta$ be the TV denoiser defined in~\eqref{eq:y} for the \( N^d \) grid with incidence matrix \( D_d \) and tuning parameter \( \lambda = c \sigma \sqrt{r_d(n) \log(e n/\delta)}/n, c>0 \) where $r_2(n)=\log n$ and $r_d (n)=1$ for $d \ge 3$. Moreover, let $\hat f:[0,1]^d \to \R$ be defined by $\hat f(x_{\bm{i}})=\hat \theta_{\bm{i}}$ for $\bm{i} \in [N]^d$ and arbitrarily elsewhere on the unit hypercube $[0,1]^d$.

Further, assume that \( N \geq C'(L, \sigma, d) \sqrt{r_d(n) \log(en/\delta)} \).
Then,
\begin{align}
		 \| \widehat{f} - f^\ast \|_n^2
		\leq {} & \inf_{ \bar{f} \in H(\alpha, L)} \left\{ \| \bar{f} - f^\ast \|^2_n \right\}
		+ C \frac{\big(L^2(\sigma\sqrt{r_d(n) \log(e n/\delta)})^{2\alpha} \big)^{\frac{1}{\alpha+1}}}{n^{\frac{2 \alpha}{d \alpha + d}}} + C \frac{\sigma^2}{n}\log (e/ \delta)\,,
	\end{align}
	with probability at least \( 1 - 2 \delta \)\,. 
	\end{proposition*}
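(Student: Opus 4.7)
The plan is to apply the sharp oracle inequality from Corollary~\ref{cor:oracle-tv-2d} (for $d=2$) and Corollary~\ref{cor:oracle-tv} (for $d\ge 3$) specialized to $T=\emptyset$, which yields the ``slow rate'' form
\begin{equation*}
\frac{1}{n}\|\hat\theta-\theta^\ast\|^2 \;\le\; \frac{1}{n}\|\bar\theta-\theta^\ast\|^2 + 4\lambda\|D\bar\theta\|_1 + C\frac{\sigma^2}{n}\log(e/\delta)
\end{equation*}
for every $\bar\theta\in\R^n$, with $\lambda = c\sigma\sqrt{r_d(n)\log(en/\delta)}/n$. For each $\bar f\in H(\alpha,L)$ and a scale parameter $k\in\{1,\dots,N\}$ to be chosen, I would take $\bar\theta$ to be the piecewise constant approximation of $\bar f$ on the natural partition of $[N]^d$ into $(N/k)^d$ axis-aligned sub-cubes of side $k$: set $\bar\theta_{\bm i}:=\bar f(x_{\bm j(\bm i)})$, where $\bm j(\bm i)$ is a fixed corner of the sub-cube containing $\bm i$.

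Two elementary estimates then control the right-hand side. H\"older regularity gives $|\bar\theta_{\bm i}-\bar f(x_{\bm i})|\le L(k/N)^\alpha$, so writing $\bar\theta^f_{\bm i}:=\bar f(x_{\bm i})$ and using the triangle inequality on $\bar\theta-\theta^\ast=(\bar\theta-\bar\theta^f)+(\bar\theta^f-\theta^\ast)$ yields
\begin{equation*}
\frac{1}{n}\|\bar\theta-\theta^\ast\|^2 \;\lesssim\; \|\bar f-f^\ast\|_n^2 + L^2(k/N)^{2\alpha}.
\end{equation*}
Moreover, $D\bar\theta$ is supported on the edges crossing cube boundaries, of which there are at most $d(N/k-1)(N/k)^{d-1}k^{d-1}\lesssim dn/k$ (counting per coordinate direction), and each such edge carries a jump of magnitude at most $L(k/N)^\alpha$ (the two corner samples being at $\ell_\infty$-distance at most $k/N$). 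Hence
\begin{equation*}
\|D\bar\theta\|_1 \;\lesssim\; \frac{dn}{k}\cdot L\left(\frac{k}{N}\right)^{\alpha} \;=\; \frac{dLn\,k^{\alpha-1}}{N^\alpha}.
\end{equation*}

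Writing $V:=\sigma\sqrt{r_d(n)\log(en/\delta)}$ and inserting these bounds together with $\lambda\asymp V/n$ into the oracle inequality reduces the proof to a one-parameter optimization
\begin{equation*}
\frac{1}{n}\|\hat\theta-\theta^\ast\|^2 \;\lesssim\; \|\bar f-f^\ast\|_n^2 + L^2(k/N)^{2\alpha} + VLk^{\alpha-1}/N^\alpha + \frac{\sigma^2}{n}\log(e/\delta).
\end{equation*}
Balancing the two middle terms gives $k_\ast=(V/L)^{1/(\alpha+1)}N^{\alpha/(\alpha+1)}$, and each of them then equals $(L^2V^{2\alpha})^{1/(\alpha+1)}N^{-2\alpha/(\alpha+1)}=(L^2V^{2\alpha})^{1/(\alpha+1)}n^{-2\alpha/(d(\alpha+1))}$, matching the advertised rate. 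The side condition $N\ge C'(L,\sigma,d)\sqrt{r_d(n)\log(en/\delta)}$ is precisely what guarantees $k_\ast\in[1,N]$, so one may take $k=\lceil k_\ast\rceil$ without affecting the order. The main technical obstacle is the careful accounting of boundary edges in the cube partition and the edge effects when $N/k$ is not an integer; the factor $2$ lost in the triangle inequality above is harmless, because at $k_\ast$ the additional term $L^2(k_\ast/N)^{2\alpha}$ is already of the target order and can be absorbed into the constant $C$ in front of the rate.
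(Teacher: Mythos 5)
Your proposal follows essentially the same route as the paper: the slow-rate ($T=\emptyset$) form of Corollaries~\ref{cor:oracle-tv-2d} and~\ref{cor:oracle-tv}, a piecewise-constant approximation on axis-aligned blocks of side $k$, the two bounds $L^2(k/N)^{2\alpha}$ for the discretization error and $\lesssim d L n k^{\alpha-1}/N^{\alpha}$ for $\|D\bar\theta\|_1$, and the same balancing $k_\ast \asymp (VN^\alpha/L)^{1/(\alpha+1)}$ with the side condition on $N$ ensuring $k_\ast\in[1,N]$. All of these computations check out and match the paper's. The one place you genuinely deviate is the treatment of the oracle term: you approximate an arbitrary $\bar f\in H(\alpha,L)$ and pass from $\frac1n\|\bar\theta-\theta^\ast\|^2$ to $\|\bar f-f^\ast\|_n^2$ by the triangle inequality, which unavoidably yields a leading constant $2$ (or $1+\epsilon$ at the cost of inflating the remainder) in front of $\inf_{\bar f}\|\bar f-f^\ast\|_n^2$, whereas the stated proposition is a sharp oracle inequality with leading constant $1$. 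The paper avoids this by first projecting $\theta^\ast$ onto the (closed, convex) set $\Theta(\alpha,L)$ of discrete H\"older vectors to obtain $f_{\mathrm{proj}}$, approximating $f_{\mathrm{proj}}$ by the block-constant vector, and invoking the inequality \eqref{eq:az} relating $\|\bar f-f^\ast\|^2$ to $\|f_{\mathrm{proj}}-\bar f\|^2+\|f_{\mathrm{proj}}-f^\ast\|^2$, so that the inf enters with constant exactly $1$. Your remark that ``the factor $2$ lost in the triangle inequality is harmless'' is correct for the $L^2(k/N)^{2\alpha}$ term but does not apply to the misspecification term, so as written your argument proves the proposition only with $2\inf_{\bar f}\|\bar f-f^\ast\|_n^2$; this is immaterial in the well-specified case $f^\ast\in H(\alpha,L)$, but to recover the sharp constant you need the projection step (or an equivalent device).
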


\begin{proof} Throughout this proof, it will be convenient to identify a function $g$ to the vector $(g(x_{\bm{i}}), {\bm{i}} \in [N]^d)$. 
	We use \eqref{eq:z} to get that for any vector $\bar f \in \R^{N^d}$, it holds
	\begin{equation}
		\label{eq:ad}
		\frac{1}{n}\| \widehat{f} - f^\ast \|^2 \leq  \frac1n\| \bar{f} - f^\ast \|^2_2 + 4 \lambda \| D \bar{f} \|_1 + C \frac{\sigma^2}{n}\log (e/ \delta)\,.
	\end{equation}
Denote by $\Theta(\alpha, L)$ the set of vectors on the grid that satisfy~\eqref{eq:au} and observe that it is a closed convex set so that \( f_{\mathrm{proj}} = \argmin_{\theta \in \Theta(\alpha, L)} \| \theta - f^\ast \|^2 \) is uniquely defined. Moreover, 	\begin{equation}
		\label{eq:az}
		\| \bar{f} - f^\ast \|^2 \leq \| f_{\mathrm{proj}} - \bar{f} \|^2 + \| f_{\mathrm{proj}} - f^\ast \|^2,
	\end{equation}
	which plugged back into \eqref{eq:ad} yields
	\begin{align}
		\label{eq:ba}
		\frac1n	 \| \widehat{f} - f^\ast \|^2
		\leq {} & \frac1n\inf_{f \in H(\alpha, L)} \| f - f^\ast \|^2
		+ \frac1n \| \bar{f} - f_{\mathrm{proj}}\|^2 + 4 \lambda \| D \bar{f} \|_1 + C \frac{\sigma^2}{n}\log (e/ \delta)\,.
	\end{align}
The remainder of the proof consists in choosing $\bar f$ to balance the approximation error and the stochastic error.

Fix an integer $k$ to be determined later and for any ${\bm i} \in [N]^d$, define $a_{\bm i}=k\lfloor\bm{i}/k\rfloor$. 
Next, define a piecewise constant approximation \( \bar{f} \) to \( f_{\mathrm{proj}} \) by $\bar{f}_{ \bm{i}} = (f_{\mathrm{proj}})_{a_{\bm{i}}}$ for $\bm{i}\in [N]^d$.

We first control the approximation error for all ${\bm i} \in [N]^d$ as follows:
%
\begin{align*}
| f_{\mathrm{proj}}( \bm{i}) - \bar f(\bm{i}) |&=| f_{\mathrm{proj}}( \bm{i}) - f_{\mathrm{proj}}(a_{\bm{i}})| \le LN^{-\alpha}\| \bm{i}-a_{\bm{i}}\|_\infty^\alpha \le L(k/N)^{\alpha}\,.
\end{align*}
%
It yields
$$
\frac{1}{n}\| \bar{f} - f_{\mathrm{proj}} \|_2^2\le L^2(k/N)^{2\alpha} \,.
$$

Next, we control the term \( \| D \bar{f} \|_1 \). To that end, observe that if $ \bm{i}$ and $ \bm{i'}$ are neighbors in the grid, then 
$$
|\bar f_{ \bm{i}}- \bar f_{ \bm{i'}}|\le LN^{-\alpha}\1(a_{ \bm{i}}\neq a_{ \bm{i'}}).
$$
Therefore
$$
\| D \bar{f} \|_1 \le L(k/N)^{\alpha}\sum_{\bm{i}\sim \bm{i'}}\1(a_{ \bm{i}}\neq a_{ \bm{i'}}) \le 
L(k/N)^{\alpha}2dk^{d-1}\Big(\frac{N}{k}\Big)^d=2dL\frac{N^{d-\alpha}}{k^{1-\alpha}}\,.
$$
Hence
$$
	\lambda \| D \bar{f} \|_1 \lesssim \frac{L\sigma }{k^{1-\alpha}N^{\alpha}}\sqrt{r_d(n)\log(en/\delta)}.
$$
Choosing now 
$$
M = \left( \frac{\sigma N^\alpha \sqrt{r_d(n) \log (en/\delta)} }{L} \right)^{\frac{1}{ \alpha +1}}, \quad   k=\left\lceil M \right\rceil\,,
$$
yields the desired result, taking into account that \( M \in [1,N] \) if we assume
\begin{equation*}
	\label{eq:dc}
	N \geq \left( \frac{L}{\sigma \sqrt{r_d(n) \log(en/\delta)}} \right)^{1/\alpha} \vee \frac{\sigma \sqrt{r_d(n) \log(en/\delta)}}{L}\,.
\end{equation*}
\end{proof}

\subsection{Estimation rate for piecewise constant functions}
\label{sec:proof-est-pw-cst}

In this subsection, we prove Proposition \ref{prp:est-pw-cst} that we recall here for convenience.

\begin{proposition*}
		Fix \( \delta \in (0,1) \), \( d \geq 2 \),  $N \ge 1$, $n=N^d$  and let \( y \) be sampled according to the Gaussian sequence model \eqref{eq:al}, where \(\theta^*_{ \bm{i}} =  f^*(x_{\bm{i}}) \), \( \bm{i} \in [N]^d \) for some unknown function $f^*\in PC(d, \beta)$.
		There exist positive constants $c$ and $C$ such that the following holds.
Let $\hat \theta$ be the TV denoiser defined in~\eqref{eq:y} for the $d$-dimensional grid with incidence matrix \( D_d \) and tuning parameter \( \lambda = c \sigma \sqrt{r_d(n) \log(e n/\delta)}/n \), where $r_2(n)=\log n$ and $r_d (n)=1$ for $d \ge 3$.
Moreover, let $\hat f:[0,1]^d \to \R$ be defined by $\hat f(x_{\bm{i}})=\hat \theta_{\bm{i}}$ for $\bm{i} \in [N]^d$ and arbitrarily elsewhere on the unit hypercube $[0,1]^d$.
Then,
%
%
	\begin{equation}
		\| \widehat{f} - f^\ast \|_n^2 \lesssim \frac{\sigma^2 \beta}{n^{1/d}} r_d(n) \log(e n/\delta) +  \frac{\sigma^2}{n}\log(e /\delta)\,,
	\end{equation}
	with probability at least \( 1 - 2 \delta \).
\end{proposition*}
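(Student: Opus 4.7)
The plan is to apply the oracle inequality (Corollary~\ref{cor:oracle-tv-2d} for $d = 2$, Corollary~\ref{cor:oracle-tv} for $d \geq 3$) with the trivial choice $\bar{\theta} = \theta^\ast$ and $T = \operatorname{supp}(D_d \theta^\ast)$. With this choice, both the approximation error $\|\bar{\theta} - \theta^\ast\|^2$ and the term $\|(D_d \bar{\theta})_{T^c}\|_1$ vanish identically, and the oracle inequality collapses to
\begin{equation*}
	\frac{1}{n}\| \hat\theta - \theta^\ast \|^2 \lesssim \frac{\sigma^2}{n}\bigl(|T|\, r_d(n) \log(en/\delta) + \log(e/\delta)\bigr)
\end{equation*}
with probability at least $1 - 2\delta$. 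Since $n = N^d$ gives $N^{d-1}/n = n^{-1/d}$, it suffices to show $|T| \lesssim C(d)\,\beta\, N^{d-1}$ to obtain the desired bound.

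The main step is therefore geometric: bounding the number of grid edges $(\bm{i}, \bm{j})$ across which $f^\ast$ takes different values. The key observation is that such an ``active'' edge must correspond to the line segment $[x_{\bm{i}}, x_{\bm{j}}]$ meeting the boundary $B(f^\ast)$. Indeed, if this segment stayed inside $[0,1]^d \setminus B(f^\ast)$, then by local constancy of $f^\ast$ on this open set, one could cover the segment with finitely many balls on which $f^\ast$ is constant, and propagate the common value from $x_{\bm{i}}$ to $x_{\bm{j}}$ by path-connectedness, forcing $\theta^\ast_{\bm{i}} = \theta^\ast_{\bm{j}}$.

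I then invoke the covering assumption: $B(f^\ast)$ admits a cover by $N(r) \le \beta r^{-(d-1)}$ Euclidean balls of radius $r$. Choosing $r = 1/N$ produces at most $\beta N^{d-1}$ such balls of radius $1/N$. Any active edge must meet one of these balls, and each ball of radius $1/N$ can meet only $O_d(1)$ grid edges: an edge has length $1/N$, so once it meets a ball of radius $1/N$, one of its endpoints lies in the concentric ball of radius $2/N$, which contains at most $5^d$ grid points (by inclusion in a $(4/N)$-cube), each of degree $2d$. Summing yields $|T| \leq C(d)\,\beta\, N^{d-1}$, which combined with the oracle bound completes the proof.

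The main obstacle is being careful about the topological step that translates the covering bound on $B(f^\ast)$ into an upper bound on the count of active grid edges. The counting itself is elementary, but the implication ``edge segment disjoint from $B(f^\ast) \Rightarrow f^\ast$ agrees at the endpoints'' genuinely uses the locally-constant structure from Definition~\ref{def:pw-constant}, and must be argued via connectedness of the segment together with a finite open cover by constancy neighborhoods. Once this is in hand, the rest is bookkeeping: plug in $\lambda$, substitute $N = n^{1/d}$, and read off the rate.
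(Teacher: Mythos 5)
Your proof is correct and follows essentially the same route as the paper: apply the oracle inequality with $\bar{\theta}=\theta^\ast$ and a set $T$ containing all active edges, then bound $|T|\lesssim \beta N^{d-1}$ via the covering number of $B(f^\ast)$. The only difference is bookkeeping — the paper takes $T$ to be all edges within $4/N$ of the boundary and counts grid points through a volume-comparison lemma (Lemma~\ref{lem:b}), whereas you take $T=\operatorname{supp}(D_d\theta^\ast)$ and count edges per covering ball directly, and you also spell out the connectedness argument that the paper dismisses as ``readily checked.''
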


\begin{proof}
For any $x \in \R^d$, and any closed set $B \subset \R^d$ define the distance from $x$ to $B$ by $d(x,B)=\min_{b \in B}\|x-b\|$. Next define
	\begin{equation}
		\label{eq:bw}
		T := \{ ( \bm{i}, \bm{j}) : \bm{i}\sim\bm{j} \text{ and } \min \{ d(x_{ \bm{i}}, B(f)), d(x_{ \bm{j}}, B(f)) \} \leq 4/N \}
	\end{equation}
	to be the set of edges whose nodes are close to the boundary $B(f)$.
It can be readily checked that the vector \( \big(f(x_{ \bm{i}}), \bm{i} \in V\big)  \) is constant on the connected components of \( (V, E \setminus T) \).

First, let us state a lemma that allows us to bound the number of grid points in a neighborhood of a set by the volume of said set. 

\begin{lemma}
	\label{lem:b}
	\cite[Lemma 8.3]{arias-castro_oracle_2012}
	Let \( B \subseteq [0,1]^d \), \( A = [0,1]^d \cap \big(B+\cB(\eta)\big) \), \( 4/N \leq \eta \leq 1 \). 
	Then, the number of grid points on a regular $d$-dimensional grid \( \mathcal{X}_N^d \) intersecting \( A \) is bounded by
	\begin{equation}
		\label{eq:bs}
		8^{-d}N^d \operatorname{vol}(A) \leq | A \cap \mathcal{X}_N^d | \leq 4^d N^d \operatorname{vol}(A).
	\end{equation}
\end{lemma}
%
	By Lemma \ref{lem:b}, Definition \ref{def:box-counting-dim} and the triangle inequality, we get
\begin{align}
		|T| &\le  \big|\mathcal{X}_N^d \cap \big(B(f)+\cB(4/N)\big)\big| \leq 4^d N^d \operatorname{vol}(B(f)+\cB(4/N))\\
	&\le 4^d N^d \beta \operatorname{vol}(B(1)) (8/N)^d (8/N)^{-(d-1)} \leq C(d) \beta N^{d-1} \label{eq:bu}\,,
	\end{align}
where $C(d)$ is a dimension-dependent constant.

	Since \( f^\ast \) is constant along all edges not included in \( T \), \( \| (D f^\ast)_{T^c} \|_1 = 0 \).
 	Taking into account \( n = N^d \), Corollaries~\ref{cor:oracle-tv-2d} and~\ref{cor:oracle-tv} readily yield the desired result.
\end{proof}

\subsection{Estimation rate for cartoon functions}
\label{sec:proof-cartoon-fcns}

In this subsection, we prove Proposition \ref{prp:cartoon-fcns} that we recall here for convenience.

\begin{proposition*}
		Fix \( \delta \in (0,1) \), \( d \geq 2 \),  $N \ge 1$, $n=N^d$  and let \( y \) be sampled according to the Gaussian sequence model \eqref{eq:al}, where \(\theta^*_{ \bm{i}} =  f^*(x_{\bm{i}}) \), \( \bm{i} \in [N]^d \) for some unknown function $f^*\in PH(d,\beta, \alpha, L)$,  $\alpha \in (0,1]$, $L>0$, $\beta>0$.
		There exist positive constants $c$, $C$ and \( C' = C'(\sigma, L, d) \) such that the following holds.
		Let $\hat \theta$ be the TV denoiser defined in~\eqref{eq:y} for the $d$-dimensional grid with incidence matrix \( D_d \) and tuning parameter \( \lambda = c \sigma \sqrt{r_d(n) \log(e n/\delta)}/n \), where $r_2(n)=\log n$ and $r_d (n)=1$ for $d \ge 3$.
		Moreover, let $\hat f:[0,1]^d \to \R$ be defined by $\hat f(x_{\bm{i}})=\hat \theta_{\bm{i}}$ for $\bm{i} \in [N]^d$ and arbitrarily elsewhere on the unit hypercube $[0,1]^d$.

If \( N \geq C'(L, \sigma, d) \sqrt{r_d(n) \log(en/\delta)} \), then
%
	\begin{align}
		\frac{1}{n} \| \widehat{f} - f^\ast \|^2
		\lesssim {} & \frac{\big(L^2(\sigma\sqrt{r_d(n) \log(e n/\delta)})^{2\alpha} \big)^{\frac{1}{\alpha+1}}}{n^{\frac{2 \alpha}{d \alpha + d}}}
		+  \frac{\sigma^2\beta}{n^{1/d}}r_d(n) \log(en\delta)+\frac{\sigma^2}{n}\log(e /\delta)\,,
	\end{align}
	with probability at least \( 1 - 2 \delta \).
\end{proposition*}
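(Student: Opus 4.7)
The plan is to apply the sharp oracle inequality of Corollary~\ref{cor:oracle-tv-2d} ($d=2$) or Corollary~\ref{cor:oracle-tv} ($d\ge 3$) with a carefully chosen pair $(\bar f, T)$ that captures both the sharp jumps along $B(f^*)$ (handled by the $|T|$ term) and the local Hölder smoothness away from $B(f^*)$ (handled by the $\|(D\bar f)_{T^c}\|_1$ term). Concretely, I would set $T$ to be the set of grid edges within Euclidean distance $4/N$ of $B(f^*)$, exactly as in the proof of Proposition~\ref{prp:est-pw-cst}; Lemma~\ref{lem:b} combined with the covering estimate $N(r)\le \beta r^{-(d-1)}$ then yields $|T|\lesssim \beta N^{d-1}$, so that the $|T|$-term of the oracle inequality contributes the piecewise constant rate $\sigma^2\beta\, n^{-1/d} r_d(n)\log(en/\delta)$.

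Next I would introduce a block size $k\in[1,N]$ (to be optimized at the end), partition $[N]^d$ into cubes of side $k$, and declare a block \emph{boundary} if it meets the $(k{+}8)/N$-neighborhood of $B(f^*)$ and \emph{interior} otherwise. Mimicking the proof of Proposition~\ref{prp:holder-fcns}, set $\bar f_{\bm i}=f^*(a_Q)$ for a fixed reference point $a_Q$ on each interior block $Q$ (on which $f^*$ is $(\alpha,L)$-Hölder), and $\bar f=f^*$ on each boundary block. The approximation error is then entirely carried by interior blocks and bounded by $\|\bar f-f^*\|_n^2\le L^2(k/N)^{2\alpha}$, just as in the Hölder proof.

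The remaining work is controlling $\|(D\bar f)_{T^c}\|_1$, which splits into three types of edges. Type (i): edges between two adjacent interior blocks, where the Hölder bound at the reference points gives a per-edge jump $\le L(k/N)^\alpha$ and, counting exactly as in the proof of Proposition~\ref{prp:holder-fcns}, produces a total contribution $\lesssim dLN^{d-\alpha}k^{\alpha-1}$. Type (ii): edges lying inside a boundary block but outside the $4/N$-band $T$, where $\bar f=f^*$ and the local Hölder bound still applies, giving per-edge size $\le LN^{-\alpha}$. Type (iii): edges connecting a boundary block to an adjacent interior block, giving per-edge size $\le L(k/N)^\alpha$. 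Using the boundary-block count $\lesssim \beta(N/k)^{d-1}$, the total of types (ii) and (iii) is $\lesssim \beta L N^{d-1-\alpha}(k\vee k^\alpha)$. After multiplication by $\lambda$, the $k^{\alpha-1}/N^\alpha$ term coming from (i) is balanced against the approximation error $L^2(k/N)^{2\alpha}$ by the same optimal choice $k\asymp (\sigma N^\alpha\sqrt{r_d(n)\log(en/\delta)}/L)^{1/(\alpha+1)}$ as in Proposition~\ref{prp:holder-fcns}, producing the Hölder contribution $(L^2(\sigma\sqrt{r_d(n)\log(en/\delta)})^{2\alpha})^{1/(\alpha+1)}\,n^{-2\alpha/(d\alpha+d)}$. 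Summing the three contributions then yields \eqref{eq:e}.

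The main obstacle will be the bookkeeping for the $\beta$-dependent cross terms coming from types (ii) and (iii): one must verify that $\lambda\cdot \beta L N^{d-1-\alpha}(k\vee k^\alpha)$ is dominated by the piecewise constant rate $\sigma^2\beta n^{-1/d}r_d(n)\log(en/\delta)$ already paid through $|T|$. This is precisely where the hypothesis $N\ge C'(L,\sigma,d)\sqrt{r_d(n)\log(en/\delta)}$ (together with the range $k\in[1,N]$ guaranteed by the same lower bound on $N$, as in Proposition~\ref{prp:holder-fcns}) is used: it ensures that the optimal $k$ lies in $[1,N]$ and that the cross-term contribution is absorbed into the two leading terms of \eqref{eq:e}. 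Once this verification is in place, the proof is a combination of the calculations already carried out in the proofs of Propositions~\ref{prp:holder-fcns} and~\ref{prp:est-pw-cst}, applied to the single pair $(\bar f,T)$ described above.
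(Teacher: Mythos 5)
Your proposal is correct and follows the same overall strategy as the paper's proof in Subsection~\ref{sec:proof-cartoon-fcns}: the same boundary edge set \( T \) with the bound \( |T| \lesssim \beta N^{d-1} \) via Lemma~\ref{lem:b}, the same partition into boxes of side \( k \), and the same optimization of \( k \) as in Proposition~\ref{prp:holder-fcns}. The one genuine difference is the choice of the comparison vector \( \bar f \) near the boundary. The paper sets \( \bar f \) equal to \( f^*(b(C)) \) on \emph{every} connected component \( C \) of \( (V, E\setminus T) \) within \emph{every} box, so that \( \bar f \) is piecewise constant throughout; all jumps of \( \bar f \) inside a box then occur across edges of \( T \) and are invisible to \( \|(D\bar f)_{T^c}\|_1 \), which consequently only collects inter-box edges and reproduces exactly the Hölder-type bound \( \lesssim d L N^{d-\alpha}k^{\alpha-1} \) — your types (ii) and (iii) simply do not arise. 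Your variant (\( \bar f = f^* \) on boundary blocks) trades zero approximation error on those blocks for the extra cross terms \( \lesssim \beta L N^{d-1-\alpha} k \), and your check that \( \lambda \cdot \beta L N^{d-1-\alpha}k \lesssim \sigma^2\beta n^{-1/d} r_d(n)\log(en/\delta) \) under the optimal \( k \asymp (\sigma N^\alpha\sqrt{r_d(n)\log(en/\delta)}/L)^{1/(\alpha+1)} \) reduces to \( Lk N^{-\alpha}\lesssim \sigma\sqrt{r_d(n)\log(en/\delta)} \), which indeed holds precisely when the quantity \( M = \sigma N^\alpha \sqrt{r_d(n)\log(en/\delta)}/L \) is at least one — guaranteed by the stated lower bound on \( N \). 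Both routes yield \eqref{eq:e}; the paper's choice buys lighter bookkeeping, while yours avoids having to introduce representatives of connected components inside each box.
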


\begin{proof}
	As in the proof of Proposition \ref{prp:est-pw-cst} (Subsection \ref{sec:proof-est-pw-cst}), in Corollaries~\ref{cor:oracle-tv-2d} and~\ref{cor:oracle-tv}, set
	\begin{equation}
		\label{eq:bx}
		T := \{ ( \bm{i}, \bm{j}) : \bm{i} \text{ neighbor of } \bm{j} \text{ and }  d(x_{ \bm{i}}, B(f))\wedge d(x_{ \bm{j}}, B(f))  \leq 4/N \},
	\end{equation}
and note that $|T| \leq C(d) \beta N^{d-1}$, using the same argument as in~\eqref{eq:bu}. Moreover, it can be readily checked that the vector \( \big(f(x_{ \bm{i}}), \bm{i} \in V\big)  \) satisfies the H\"older condition~\eqref{eq:au} on the connected components of \( (V, E \setminus T) \). 

Next, we adopt the same discretization of as in Proposition~\ref{prp:holder-fcns}, with a slight modification to take into account that \( f^\ast \) is only Hölder-continuous within connected components of the underlying grid.
To that end, fix an integer $k$ to be determined later and for any ${\bm i} \in [N]^d$, define indices \( a_{ \bm{i}} \) and corresponding boxes \( A_{ \bm{i}} \) by
\begin{equation}
	\label{eq:ct}
	(a_i)_j = \left\{
	\begin{aligned}
		k \lfloor i_j / k \rfloor, \quad &i_j \leq N,\\
		N, \quad &i_j = N + 1,
	\end{aligned}
	\right. \quad
	A_{ \bm{i}} = \llbracket a_{i_1}, a_{i_1 + 1} \llbracket \times \dots \times \llbracket a_{i_d}, a_{i_d + 1} \llbracket\,.
\end{equation}
For each of the boxes \( A \) and every connected component \( C \) of \( (V, E \setminus T) \) within, pick a fixed representative \( b(C) \) and write \( C( \bm{i}) \) for the connected component in \( A_{ \bm{i}} \) that \( \bm{i} \) belongs to.
Next, define a piecewise constant approximation \( \bar{f} \) to \( f^* \) by $\bar{f}_{\bm{i}} = f^*_{ b(C(\bm{i}))}$ for $\bm{i}\in [N]^d$. 

Using the same arguments as in the proof of Proposition~\ref{prp:holder-fcns} (Subsection \ref{sec:proof-holder-fcns}), we get first that $n^{-1}\| \bar{f} - f^* \|_2^2\le L^2(k/N)^{2\alpha}$ and second that
$$
\| (D \bar{f})_{T^c} \|_1 \le 2dL\frac{N^{d}}{k^{1-\alpha}N^\alpha}\,.
$$
Choosing now 
$$
k=\left\lceil\Big( \frac{\sigma N^\alpha \sqrt{r_d(n) \log (en/\delta)} }{L} \Big)^{\frac{1}{ \alpha +1}} \right\rceil
$$
and applying  Corollaries~\ref{cor:oracle-tv-2d} and~\ref{cor:oracle-tv} yields the desired result.
\end{proof}

\section{Rates for Haar wavelet thresholding}
\label{sec:haar-thresholding}

Interestingly, despite inferior performance in practice \cite{NeeWar13a}  Haar wavelet thresholding in dimension \( d \geq 2 \) yields similar rates to the ones we obtained in Corollaries \ref{cor:oracle-tv-2d} and \ref{cor:oracle-tv}, which we will show here in the 2D case.
It is a consequence of \cite[Proposition 7]{NeeWar13a}.

First, let us recall the notation from \cite{NeeWar13a} for the Haar basis.
In one dimension, the Haar wavelets are defined by considering the constant function \( H^0 \) on \( [0,1] \), 
\begin{equation}
	\label{eq:bg}
	H^0(t) = \left\{
	\begin{aligned}
		1, \quad & 0 \leq t < 1,\\
		0, \quad & \text{otherwise},
	\end{aligned}
	\right.
\end{equation}
and the mother wavelet \( H^1 \),
\begin{equation}
	\label{eq:bl}
	H^1(t) = \left\{
	\begin{aligned}
		1, \quad & 0 \leq t < 1/2,\\
		-1, \quad & 1/2 \leq t < 1,
	\end{aligned}
	\right.
\end{equation}
which is dilated and translated to get
\begin{equation}
	\label{eq:bm}
	H_{m, k}(t) = 2^{m/2} H^1(2^m t - k), \quad m \in \mathbb{N},\, 0 \leq k < 2^m\,.
\end{equation}
This collection of functions is an orthonormal basis of \( L_2([0,1)) \).
The bivariate Haar basis is then obtained by tensorization, setting
\begin{equation}
	\label{eq:dg}
	H^e(u,v) = H^{e_1}(u)H^{e_2}(v), \quad e = (e_1, e_2) \in V := \left\{ \{0, 1\}, \{1, 0\}, \{1, 1\} \right\}\,,
\end{equation}
and
\begin{equation}
	\label{eq:dh}
	H_{j, k}^e(x) = 2^{j} H^e(2^j x - k), \quad e \in V,\, j \geq 0,\, k \in \mathbb{Z}^2 \cap 2^j Q\,,
\end{equation}
where \( Q = (0, 1]^2 \), which again form an orthonormal basis of \( L_2(Q) \).
From there, by sampling on the grid \( \mathcal{X}^2_N \), \( N = 2^m \) we get discrete signals
\begin{align}
	(h^0)_{i_1, i_2} = {} & H^0((i_1 - 1)/N, (i_2 - 1)/N),\\
 	(h^e_{j, k})_{i_1, i_2} = {} & H^e_{j, k}((i_1 - 1)/N, (i_2 - 1)/N), \quad (i_1, i_2) \in \mathcal{X}_N^2.
\end{align}
such that
$$ \{ h^0 \} \cup \{ h^e_{j, k} \}^{e \in V}_{0 \leq j \leq n-1,\, k \in \mathbb{Z}^2 \cap 2^j Q}
$$ 
is an orthonormal basis of \( \R^{N^2} \).
Collecting the coefficients of these vectors into a matrix \( O \), we define the bivariate Haar wavelet transform by \( \mathcal{H}(y) = O^\top y \).

Second, we  use the performance of signal thresholding from \cite{DonJoh94} and the weak \( \ell_1 \) estimate for the Haar coefficients in terms of the TV norm from \cite{NeeWar13a}.

\begin{lemma}
	\label{lem:c}
	\cite[Theorem 1]{DonJoh94}
	Let \( y \) be drawn from the Gaussian sequence model \eqref{eq:al} and denote by \( \eta(y) \) the soft thresholding estimator defined by
	\begin{equation}
		\label{eq:w}
		\eta(y)_j = \operatorname{sgn}(y_j) ((|y_j| - \sigma \sqrt{2 \log n}) \vee 0)\,.
	\end{equation}
	Then,
	\begin{equation}
		\frac{1}{n} \E \| \eta(y) - \theta^\ast \|_2^2 \leq \frac{2 \log n + 1}{n} \left(\sigma^2 + \sum_{i = 1}^n ({\theta^\ast_i}^2 \wedge \sigma^2) \right)\,.
	\end{equation}
\end{lemma}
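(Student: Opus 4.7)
The plan is to reduce the claim to a per-coordinate oracle inequality and then sum. Since $\eta$ acts coordinatewise and the components of $\varepsilon$ are independent,
\begin{equation}
	\E\|\eta(y) - \theta^\ast\|_2^2 = \sum_{i=1}^n r(\theta_i^\ast), \qquad r(\theta) := \E\bigl[(\eta_t(\theta + \sigma Z) - \theta)^2\bigr],
\end{equation}
with $Z \sim \mathcal{N}(0,1)$ and $\eta_t$ the scalar soft-thresholding operator at level $t := \sigma\sqrt{2\log n}$. It then suffices to establish a per-coordinate bound $r(\theta) \leq (2\log n + 1)\bigl(\sigma^2/n + \theta^2 \wedge \sigma^2\bigr)$, since summing across $i$ and dividing by $n$ yields the claim.

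The per-coordinate bound is assembled from three pieces. First, the \emph{null risk} $r(0) = \E[\eta_t(\sigma Z)^2]$ is controlled by a direct Gaussian tail computation: after substitution, $r(0) \lesssim \sigma^2 \phi(t/\sigma)(t/\sigma)^{-3}$, and at the calibrated threshold $t/\sigma = \sqrt{2\log n}$ one has $\phi(t/\sigma) = (n\sqrt{2\pi})^{-1}$, yielding $r(0) \lesssim \sigma^2/(n(\log n)^{3/2}) \leq \sigma^2/n$. Second, a \emph{plateau bound} $r(\theta) \leq \sigma^2 + t^2$ holds uniformly in $\theta$; it is obtained through Stein's identity by writing $\eta_t(y) = y + g(y)$ with $g(y) = -\sign(y)\min(|y|, t)$ (so $|g| \leq t$ and $g' \leq 0$ a.e.), expanding $(\eta_t(y) - \theta)^2$, and applying $\E[Zg(\theta + \sigma Z)] = \sigma\E[g'(y)]$. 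Third, a \emph{small-signal bound} $r(\theta) \leq \theta^2 + O(\sigma^2/n)$ for $|\theta| \leq \sigma$ is obtained by conditioning on $\{|y| \leq t\}$ (where $\eta_t(y) = 0$ contributes exactly $\theta^2$) and on its complement (where an elementary Gaussian tail estimate shows the contribution is $O(\sigma^2/n)$).

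Combining these ingredients through the min gives $r(\theta) \leq \sigma^2/n + \theta^2 \wedge (\sigma^2 + t^2)$, and the elementary inequality $\theta^2 \wedge (\sigma^2 + t^2) \leq (1 + t^2/\sigma^2)(\theta^2 \wedge \sigma^2) = (2\log n + 1)(\theta^2 \wedge \sigma^2)$ delivers the desired per-coordinate bound. The principal technical obstacle is the sharp null-risk estimate: obtaining the $\sigma^2/n$ bound (rather than a weaker $\sigma^2 (\log n)/n$) relies on the exact exponential Gaussian tail decay at the level $t = \sigma\sqrt{2\log n}$, which is precisely why this universal calibration is chosen. A secondary subtlety is the Stein-identity cancellation in the plateau bound: a naive triangle-inequality split would only yield $2(\sigma^2 + t^2)$, which would degrade the final prefactor by a factor of two.
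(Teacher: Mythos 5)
The paper does not prove this lemma at all: it is imported verbatim as \cite[Theorem 1]{DonJoh94}, so there is no in-paper argument to compare against. What you have written is essentially the classical Donoho--Johnstone proof skeleton: reduce to the scalar risk $r(\theta)=\E[(\eta_t(\theta+\sigma Z)-\theta)^2]$, bound the null risk $r(0)$ by a Gaussian tail computation at the universal threshold $t=\sigma\sqrt{2\log n}$, prove the plateau bound $r(\theta)\le\sigma^2+t^2$ via Stein's identity (your observation that the naive triangle-inequality split loses a factor here is correct and is exactly why the identity is needed), and combine through $\theta^2\wedge(\sigma^2+t^2)\le(1+t^2/\sigma^2)(\theta^2\wedge\sigma^2)$. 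That skeleton is sound, and the final algebra delivering the prefactor $2\log n+1$ is right.

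The one step that fails as written is the small-signal bound. You claim that on the complement event $\{|y|>t\}$ ``an elementary Gaussian tail estimate shows the contribution is $O(\sigma^2/n)$.'' This is false for $|\theta|$ comparable to $\sigma$: writing $u=\sqrt{2\log n}$ and $\theta=a\sigma$ with $a\in(0,1]$, one has
\begin{equation}
\E\bigl[(\eta_t(y)-\theta)^2\,\1(y>t)\bigr]=\sigma^2\int_{-a}^{\infty}w^2\,\phi(w+u)\,\diff w\;\asymp\;\sigma^2\,\frac{a^3 e^{ua}}{n}+\sigma^2\,\frac{\phi(u)}{u^3}\,,
\end{equation}
and the first term is of order $\sigma^2 e^{\sqrt{2\log n}}/n$ when $a\asymp1$, which is not $O(\sigma^2/n)$ (nor even $O(\sigma^2\log n/n)$). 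The correct statement $r(\theta)\le\theta^2+r(0)$ is true, but it requires exploiting the cancellation with the deficit $-\theta^2\,\rP(|y|>t)$ coming from the event $\{|y|\le t\}$ (whose contribution is $\theta^2\,\rP(|y|\le t)$, not $\theta^2$): the combined quantity $\sigma^2\int_{-a}^{\infty}(w^2-a^2)\phi(w+u)\,\diff w\le\sigma^2\int_{a}^{\infty}w^2\phi(w+u)\,\diff w\le 2\sigma^2\phi(u)/u^3$ is indeed $O(\sigma^2/(n(\log n)^{3/2}))$ because the integrand is nonpositive on $(-a,a)$. Equivalently, and more cleanly, one can use the identity $\partial_\theta r(\theta)=2\theta\,\rP(|y|\le t)\le 2\theta$ for $\theta\ge0$ and integrate from $0$, which is how Donoho and Johnstone argue. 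Either fix closes the gap; as literally stated, your complement-event estimate does not. (A secondary caveat: since the target inequality has the explicit constant $2\log n+1$ with no slack, the null-risk bound $r(0)\le\sigma^2/n$ needs the exact formula $r(0)=2\sigma^2[(1+u^2)\Phi(-u)-u\phi(u)]$ or a sufficiently sharp tail bound valid down to small $n$, not just the asymptotic $\phi(u)/u^3$ rate.)
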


\begin{lemma}
	\label{lem:d}
	\cite[Proposition 7]{NeeWar13a}
	Write \( D_2 \) for the incidence matrix of the 2D grid, let \( \theta \) have zero mean and let \( c_{(k)} \) denote the \( k \)th largest entry of the bivariate Haar transform \( \mathcal{H}(\theta) \).
	Then, there exists a constant \( C > 0 \) such that
	\begin{equation}
		\label{eq:s}
		| c_{(k)} | \leq C \frac{\| D_2 \theta \|_1}{k}\,.
	\end{equation}
\end{lemma}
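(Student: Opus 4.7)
The plan is to prove the inequality in three stages: a per-wavelet local bound, a per-scale $\ell_1$ bound, and a delicate log-free aggregation that is specific to two dimensions.

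\textbf{Step 1 (local bound).} Fix a dyadic square $S_{j,k}$ of side $N/2^j$ and one of the three anisotropic Haar wavelets $h^e_{j,k}$ supported on it, taking values $\pm 2^j/N$ on two congruent subrectangles $R^+, R^-\subset S_{j,k}$ separated by an axis-aligned interface (with a quadrant decomposition for the diagonal wavelet $e=(1,1)$). The key identity is
\[
\sum_{R^+}\theta-\sum_{R^-}\theta \;=\; h^{-1}\!\!\sum_{(a,b)\in R^+\times R^-}\!\!(\theta_a-\theta_b),
\]
where $h$ is the width of $R^\pm$ orthogonal to the interface. Bounding each $|\theta_a-\theta_b|$ by a telescoping sum of edge differences of $D_2\theta$ along the perpendicular column joining $a$ and $b$, then summing over columns, I would verify that the prefactor $2^j/N$ exactly cancels $h=N/2^{j+1}$ and obtain the scale-free local inequality
\[
|\langle \theta, h^e_{j,k}\rangle| \;\leq\; C\,T_{j,k}, \qquad T_{j,k}:=\sum_{\mathrm{edges}\,\subseteq\, S_{j,k}} |(D_2\theta)_e|.
\]

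\textbf{Step 2 (per-scale $\ell_1$).} Since at scale $j$ the $4^j$ dyadic squares tile the grid and each edge lies in at most one such square, summing the local bound over $k$ and over $e$ yields the scale-uniform estimate
\[
\sum_{k,e}|\langle \theta,h^e_{j,k}\rangle|\;\leq\;3C\,M,\qquad M:=\|D_2\theta\|_1,
\]
whose distributional form is $\#\{(k,e): |c^e_{j,k}|>t\}\leq 3CM/t$ for every scale $j$.

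\textbf{Step 3 (log-free aggregation).} Summing Step 2 naively across the $O(\log n)$ scales only produces $|c_{(k)}|\lesssim M\log n/k$, which is not enough. Removing the logarithm requires a multilevel/tree argument of Cohen--DeVore--Petrushev--Xu type: the masses $\{T_{j,k}\}$ obey a nestedness relation ($T_{j,k}$ essentially equals the sum of the four child TV masses $T_{j+1,k'}$ plus the TV along the internal dyadic interfaces of $S_{j,k}$), so one can count ``active'' dyadic cells at threshold $t$ by walking down the dyadic quad-tree and charging each active branch to the TV it consumes. The critical input that makes this counting give $V/t$ rather than $V\log n/t$ is the two-dimensional Sobolev/isoperimetric inequality $\|\theta-\bar\theta\|_2\lesssim \|D_2\theta\|_1$ on the grid, which provides the Parseval control $\sum_{j,k,e}|c^e_{j,k}|^2\lesssim M^2$ and is exactly the coincidence $d/(d-1)=2$ available only in dimension two. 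Combining the tree counting with the $\ell_2$ control gives $\#\{(j,k,e) : |c^e_{j,k}|>t\} \lesssim M/t$, equivalent to the claim.

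The \emph{main obstacle} is Step 3: the naive per-scale sum loses a $\log n$ factor, and eliminating it is possible only because $d=2$ is the borderline case for the Sobolev embedding of $BV$. There is no clean analogue in dimensions $d\geq 3$, which is consistent with the fact remarked in the surrounding text that Haar thresholding only matches the TV denoiser rate on the 2D grid up to an additional logarithmic factor.
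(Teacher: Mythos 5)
First, note that the paper does not prove this lemma at all: it is imported verbatim as \cite[Proposition 7]{NeeWar13a}, whose proof in turn adapts the argument of Cohen--DeVore--Petrushev--Xu for the weak-$\ell_1$ bound on Haar coefficients of $BV(\R^2)$ functions. So there is no internal proof to compare against; what can be judged is whether your reconstruction would stand on its own.

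Your Steps 1 and 2 are correct and are exactly the standard opening of that argument: the $L^2$-normalization $2^j/N$ cancels against the length $N/2^{j+1}$ of the telescoping paths (though the identity you write with the prefactor $h^{-1}$ over \emph{all} pairs $(a,b)\in R^+\times R^-$ should be either $|R^-|^{-1}$ over all pairs or a pairing of each $a$ with its translate $a+v$; the conclusion $|\langle\theta,h^e_{j,k}\rangle|\le C\,T_{j,k}$ survives either way), and disjointness of the dyadic squares at a fixed scale gives the per-scale weak-$\ell_1$ bound. The genuine gap is Step 3, which is where essentially all of the difficulty of the result lives and which you describe rather than execute. In particular, the specific combination you propose does not work as stated: Parseval plus the critical embedding gives $\sum_{j,k,e}|c^e_{j,k}|^2\lesssim M^2$, hence the count $\#\{|c|>t\}\lesssim M^2/t^2$; but $M^2/t^2\ge M/t$ throughout the entire relevant range $t\le M$, so the global $\ell_2$ control combined with the per-scale weak-$\ell_1$ bound still only yields $\min(MJ/t,\,M^2/t^2)$ with $J\asymp\log n$, i.e.\ the logarithm is not removed. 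The actual CDPX mechanism applies the scale-invariant $L^2$ Poincar\'e inequality \emph{locally}, inside a stopping-time decomposition of the dyadic quad-tree, charging each selected cube to the TV mass it strictly consumes relative to its children; this is a multi-page argument, not a corollary of the two global bounds. You have correctly diagnosed where the obstacle is and which tools are needed (and correctly observed that $d=2$ is the borderline Sobolev exponent), but as written the proposal is a citation of the hard step rather than a proof of it.
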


\begin{proposition}
	Let \( y \) be a sample of the Gaussian sequence model \eqref{eq:al} with \( n = N^2 \), denote by \( \widehat{\theta} = \mathcal{H}^{-1} \circ \eta \circ \mathcal{H}(y) \) the soft thresholder in the bivariate Haar wavelet basis and by \( D_2 \) the incidence matrix of the \( N \times N \) grid. Then,
	\begin{equation}
		\label{eq:aw}
		\E \MSE( \widehat{\theta} ) \lesssim \frac{\log n}{n} ( \sigma^2 + \sigma \| D_2 \theta^\ast \|_1 ),
	\end{equation}
	for \( n \) large enough.
\end{proposition}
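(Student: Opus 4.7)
The plan is to exploit the orthogonality of the Haar transform so that the estimation error decouples into a coordinate-wise soft-thresholding error in the wavelet domain, then invoke Lemma \ref{lem:c} and Lemma \ref{lem:d} to control that error in terms of $\|D_2\theta^\ast\|_1$.

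First I would use that $\mathcal{H}$ is an orthogonal operator on $\R^n$, so $\|\widehat{\theta}-\theta^\ast\|_2^2 = \|\eta(\mathcal{H}(y))-\mathcal{H}(\theta^\ast)\|_2^2$ and $\mathcal{H}(\varepsilon)\sim \mathcal{N}(0,\sigma^2 I_n)$. Hence Lemma \ref{lem:c} applied to the Haar coefficient vector $c := \mathcal{H}(\theta^\ast)$ gives
\begin{equation}
    \E\|\widehat{\theta}-\theta^\ast\|_2^2 \;\leq\; (2\log n+1)\Bigl(\sigma^2 + \sum_{i=1}^n (c_i^2 \wedge \sigma^2)\Bigr).
\end{equation}

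The main obstacle is that Lemma \ref{lem:d} requires a zero-mean signal, so I would decompose $\theta^\ast = \bar\theta\bone + \widetilde\theta$ where $\bar\theta = n^{-1}\bone^\top\theta^\ast$. Since $\bone$ lies in the span of $h^0$ alone, $\mathcal{H}(\theta^\ast)$ differs from $\mathcal{H}(\widetilde\theta)$ only in the single coordinate associated with $h^0$. The contribution of that coordinate to $\sum_i(c_i^2\wedge\sigma^2)$ is at most $\sigma^2$, so I can absorb it into the $\sigma^2$ term. Moreover, $D_2\bone=0$ implies $\|D_2\widetilde\theta\|_1 = \|D_2\theta^\ast\|_1$, so Lemma \ref{lem:d} applies with the right-hand side unchanged.

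Next I would bound the remaining sum by sorting in decreasing magnitude and applying Lemma \ref{lem:d}: writing $M := C\|D_2\theta^\ast\|_1$ we have $c_{(k)}^2\wedge\sigma^2 \le (M/k)^2\wedge\sigma^2$. Splitting at $k^\ast := \lceil M/\sigma\rceil$ and bounding the tail by comparison with an integral,
\begin{equation}
    \sum_{k\geq 1}\Bigl(\tfrac{M^2}{k^2}\wedge\sigma^2\Bigr) \;\leq\; k^\ast\sigma^2 + \sum_{k>k^\ast}\tfrac{M^2}{k^2} \;\lesssim\; M\sigma \;=\; C\sigma\|D_2\theta^\ast\|_1.
\end{equation}

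Finally I would combine the two displays, divide by $n$, and note that the hidden constants and lower-order $O(\sigma^2/n)$ terms are absorbed in the $\lesssim$ notation to obtain the claimed bound $\E\,\MSE(\widehat\theta)\lesssim (\log n/n)(\sigma^2+\sigma\|D_2\theta^\ast\|_1)$ for $n$ large enough. The only delicate step is the mean-separation argument; the rest is the standard weak-$\ell_1$ thresholding computation familiar from Donoho--Johnstone style analyses.
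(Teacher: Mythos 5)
Your proposal is correct and follows essentially the same route as the paper's proof: orthogonality of $\mathcal{H}$ plus Lemma \ref{lem:c}, peeling off the mean coordinate so that Lemma \ref{lem:d} applies to the remaining (zero-mean) coefficients, and a cut-off at $k\asymp \|D_2\theta^\ast\|_1/\sigma$ in the weak-$\ell_1$ tail sum. The only difference is cosmetic — you spell out the mean-separation step that the paper states in one line.
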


\begin{proof}
  Since the Haar transform is orthogonal, we can apply Lemma \ref{lem:c} to the thresholding of the Haar coefficient vector of \( y \).
	Writing \( c^\ast_k \) for the coefficients \( c^\ast = \mathcal{H}(\theta^\ast) \), we have
	\begin{align}
		\label{eq:aa}
		\frac{1}{n} \E \| \widehat{\theta} - \theta^\ast \|_2^2
		= {} & \frac{1}{n} \E \| \eta( \mathcal{H}(y)) - \mathcal{H}(\theta) \|_2^2
		\lesssim \frac{\log n}{n} \left( \sigma^2 + \sum_{i = 0}^{n-1} ({c^\ast_i}^2 \wedge \sigma^2) \right)\\
		\leq {} & \frac{\log n}{n} \left( 2 \sigma^2 + \sum_{i = 1}^{n-1} ({c^\ast_i}^2 \wedge \sigma^2) \right)\,.
	\end{align}
	Now, since \( c^\ast_0 \) is to the mean of \( \theta^\ast \), the remaining Haar coefficients correspond to a mean zero vector, so we can write them in descending order as \( c^\ast_{(i)} \), apply Lemma \ref{lem:d} and introduce a cut-off at \( k \) to obtain
	\begin{align}
		\label{eq:an}
		\frac{1}{n} \E \| \widehat{\theta} - \theta^\ast \|_2^2
		\lesssim {} & \frac{\log n}{n} \left( 2 \sigma^2 + \sum_{i = 1}^{n-1} ({c^\ast_{(i)}}^2 \wedge \sigma^2) \right) \\
		\leq {} & \frac{\log n}{n} \left( 2 \sigma^2 + \sum_{i = 1}^{n-1} (\| D_2 \theta^\ast \|_1^2 \frac{1}{i^2} \wedge \sigma^2) \right) \\
		\leq {} & \frac{\log n}{n} \left( 2 \sigma^2 + \left(k \sigma^2 + \sum_{i = k+1}^{n-1} \frac{\| D_2 \theta^\ast \|^2_1}{i^2} \right) \right) \\
		\lesssim {} & \frac{\log n}{n} \left( 2 \sigma^2 + \left(k \sigma^2 + \| D_2 \theta^\ast \|_1^2 \int_{k+1}^\infty x^{-2} \diff x \right) \right) \\
		= {} & \frac{\log n}{n} \left( 2 \sigma^2 + \left(k \sigma^2 + \frac{\| D_2 \theta^\ast \|_1^2}{k + 1} \right) \right)\,. \\
	\end{align}
	Provided \( n \) is large enough, choosing \( k := \lfloor \| D_2 \theta^\ast \|_1 / \sigma \rfloor \) then yields
	\begin{align}
		\label{eq:aq}
		\frac{1}{n} \E \| \widehat{\theta} - \theta^\ast \|_2^2
		\lesssim {} & \frac{\log n}{n} (\sigma^2 + \sigma \| D_2 \theta^\ast \|_1).
	\end{align}
\end{proof}

Note that for the sake of a simple presentation, we phrased this result in terms of the (expected) mean squared error, but similar bounds hold with high probability and allowing misspecification, as well as an \( \ell_0 \)-\( \ell_1 \) trade-off.

Using Lemma \ref{lem:d}, we can also show a version of Corollary \ref{cor:oracle-tv-2d} that has an additional log factor.
\begin{proposition}
\label{prp:haar-tv-2d}
Let \( D_2 \) denote the incidence matrix of the 2D grid.
Then, there exist constants \( C, c > 0 \) such that the TV denoiser $\hat \theta$ defined in  \eqref{eq:y} with \( \lambda = c \sigma \log n \sqrt{\log (e n/\delta)}/n \) satisfies
	$$
	\E \mse(\hat \theta) \lesssim  \frac{ \sigma\|D_2\theta^*\|_1 \wedge \sigma^2\|D_2\theta^*\|_0 + 1}{n}\log^3 (en),
	$$
	where $\|D\theta^*\|_0$ denotes the number of nonzero components of $D\theta^*$.
\end{proposition}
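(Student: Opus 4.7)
The plan is to replay the first half of the proof of Theorem~\ref{thm:Lasso-rates} verbatim, but to replace the H\"older/pseudo-inverse estimate $\|(D^\dag)^\top \varepsilon\|_\infty \le \lambda n$ by an estimate based on the Haar wavelet expansion of the residual, using Lemma~\ref{lem:d} to control the $\ell_1$-mass of the Haar coefficients by $\|D_2(\hat\theta-\bar\theta)\|_1$. This substitutes the factor $\rho$ (which satisfies $\rho\lesssim\sqrt{\log n}$ on the 2D grid by Proposition~\ref{prp:tv-cols-2d}) by the weaker factor $\log n$, paying a single extra logarithmic factor relative to Corollary~\ref{cor:oracle-tv-2d} but using only the wavelet geometry of the grid rather than the spectral bookkeeping of $D^\dag$.

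More precisely, from the first-order optimality conditions for $\hat\theta$ we obtain, as in Subsection~\ref{proof:main}, the basic inequality
\begin{equation*}
\tfrac{1}{n}\bigl(\|\bar\theta-\hat\theta\|^2+\|\theta^*-\hat\theta\|^2\bigr)\le\tfrac{1}{n}\|\bar\theta-\theta^*\|^2+\tfrac{2}{n}\varepsilon^\top(\hat\theta-\bar\theta)+2\lambda\|D_2\bar\theta\|_1-2\lambda\|D_2\hat\theta\|_1.
\end{equation*}
Let $\Pi$ denote the orthogonal projection onto $\ker D_2=\operatorname{span}(\bone_n)$ and split $\varepsilon^\top(\hat\theta-\bar\theta)=(\Pi\varepsilon)^\top(\hat\theta-\bar\theta)+\varepsilon^\top(I-\Pi)(\hat\theta-\bar\theta)$. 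The first term is controlled as before by $\|\Pi\varepsilon\|\,\|\hat\theta-\bar\theta\|$ with $\|\Pi\varepsilon\|\lesssim\sigma\sqrt{\log(e/\delta)}$. For the second term I expand the mean-zero vector $w:=(I-\Pi)(\hat\theta-\bar\theta)$ in the bivariate Haar basis $\{h^e_{j,k}\}$, writing $w=\sum_\ell c_\ell h_\ell$, so that
\begin{equation*}
\bigl|\varepsilon^\top w\bigr|=\Bigl|\sum_\ell c_\ell\,\varepsilon^\top h_\ell\Bigr|\le\Bigl(\max_\ell |\varepsilon^\top h_\ell|\Bigr)\,\sum_\ell|c_\ell|.
\end{equation*}
Orthonormality of the Haar basis makes $\{\varepsilon^\top h_\ell\}$ i.i.d.\ $\mathcal{N}(0,\sigma^2)$, so a Gaussian maximal inequality yields $\max_\ell|\varepsilon^\top h_\ell|\lesssim\sigma\sqrt{\log(en/\delta)}$ with probability at least $1-\delta$. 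Now Lemma~\ref{lem:d} applied to the mean-zero vector $w$ (using $D_2\Pi=0$ so that $\|D_2 w\|_1=\|D_2(\hat\theta-\bar\theta)\|_1$) gives $|c_{(\ell)}|\lesssim\|D_2(\hat\theta-\bar\theta)\|_1/\ell$, hence $\sum_\ell|c_\ell|\lesssim \|D_2(\hat\theta-\bar\theta)\|_1\log n$. Combining, on the same event,
\begin{equation*}
|\varepsilon^\top(I-\Pi)(\hat\theta-\bar\theta)|\lesssim\sigma(\log n)\sqrt{\log(en/\delta)}\,\|D_2(\hat\theta-\bar\theta)\|_1.
\end{equation*}

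The right-hand side is precisely of the form $\lambda' n\,\|D_2(\hat\theta-\bar\theta)\|_1$ with $\lambda'\asymp\sigma(\log n)\sqrt{\log(en/\delta)}/n$, which matches the choice of $\lambda$ in the proposition. From here I proceed verbatim as in Subsection~\ref{proof:main}: I use the triangle-inequality cone identity $\|D_2(\hat\theta-\bar\theta)\|_1+\|D_2\bar\theta\|_1-\|D_2\hat\theta\|_1\le 2\|(D_2(\hat\theta-\bar\theta))_T\|_1+2\|(D_2\bar\theta)_{T^c}\|_1$, bound $\|(D_2(\hat\theta-\bar\theta))_T\|_1\le\kappa_T^{-1}\sqrt{|T|}\,\|\hat\theta-\bar\theta\|$ with $\kappa_T\gtrsim 1$ on the 2D grid by Lemma~\ref{lem:a}, and finish with Young's inequality to absorb $\|\hat\theta-\bar\theta\|^2$ into the left-hand side. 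This yields a high-probability sharp oracle inequality of exactly the shape of~\eqref{eq:z}, but with the factor $(\log n)\log(en/\delta)$ replaced by $(\log n)^2\log(en/\delta)$. Specializing to $\bar\theta=\theta^*$ together with $T=\operatorname{supp}(D_2\theta^*)$ or $T=\emptyset$ produces, with probability at least $1-2\delta$,
\begin{equation*}
\mse(\hat\theta)\lesssim \tfrac{1}{n}\Bigl[\sigma\|D_2\theta^*\|_1\wedge\sigma^2\|D_2\theta^*\|_0\Bigr]\log^{2}n\sqrt{\log(en/\delta)}+\tfrac{\sigma^2}{n}\log(e/\delta).
\end{equation*}
To extract the expected-value statement I choose $\delta=1/n$, control the residual low-probability event by the a~priori deterministic bound $\|\hat\theta-\theta^*\|^2\le 2\|y-\theta^*\|^2+2\|y-\hat\theta\|^2\lesssim\|\varepsilon\|^2$ (using optimality of $\hat\theta$ for $\lambda=0$-style control) and integrate the remaining tail. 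Collecting logarithmic factors, both the $\ell_1$ branch (of order $\log^{3/2}n$) and the $\ell_0$ branch (of order $\log^3 n$) are absorbed in $\log^3(en)$.

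The main obstacle is not conceptual but bookkeeping: I must verify that Lemma~\ref{lem:d}, which is stated for mean-zero vectors, applies to $w=(I-\Pi)(\hat\theta-\bar\theta)$ and that passing from $\|D_2 w\|_1$ to $\|D_2(\hat\theta-\bar\theta)\|_1$ is cost-free (true because $D_2\Pi=0$), and then keep careful track of the powers of $\log$ through the Young-inequality step and through the high-probability-to-expectation conversion so that the final rate genuinely fits into the $\log^3(en)$ envelope stated in the proposition.
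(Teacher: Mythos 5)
Your proposal is correct and takes essentially the same route as the paper: both replace the $\|(D^\dag)^\top \varepsilon\|_\infty$ control from Subsection~\ref{proof:main} by pairing the Haar coefficients of $\varepsilon$ with those of $\hat\theta-\bar\theta$, invoking Lemma~\ref{lem:d} to bound their $\ell_1$-mass by $\|D_2(\hat\theta-\bar\theta)\|_1\log n$ while treating the mean (zeroth) coefficient separately, and then continuing verbatim with the cone and Young-inequality steps of the main proof. Your explicit verification that Lemma~\ref{lem:d} applies to the mean-zero part (via $D_2\Pi=0$) and your handling of the high-probability-to-expectation conversion are details the paper leaves implicit, but they are consistent with its argument.
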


\begin{proof}
	We follow the proof of Corollary \ref{cor:oracle-tv-2d} given in Section \ref{proof:main} and only indicate where we use Lemma \ref{lem:d} instead of controlling \( \rho \).
	Recall that \( \mathcal{H} \) is an orthogonal operator whose first coordinate corresponds to the mean of a vector.
	Starting from \eqref{eq:ar}, we get
	\begin{align}
		\varepsilon^\top (\widehat{\theta} - \bar{\theta}) = {} & \mathcal{H}(\varepsilon)^\top (\mathcal{H}(\widehat{\theta} - \bar{\theta}))\\
		= {} & \sum_{i = 0}^{n-1} (\mathcal{H}(\varepsilon)_i) (\mathcal{H}(\widehat{\theta} - \bar{\theta})_i)\\
		\leq {} & | \mathcal{H}(\varepsilon)_0 | \| \widehat{\theta} - \bar{\theta} \|_2 + \max_{i = 1, \dots, n-1} | \mathcal{H}(\varepsilon)_i | \| D_2 (\widehat{\theta} - \bar{\theta}) \|_1 \sum_{i = 1}^{n - 1} \frac{1}{i}\\
		\leq {} & | \mathcal{H}(\varepsilon)_0 | \| \widehat{\theta} - \bar{\theta} \|_2 + \max_{i = 1, \dots, n-1} | \mathcal{H}(\varepsilon)_i | \| D_2 (\widehat{\theta} - \bar{\theta}) \|_1 \log n\,.
	\end{align}
	Now, use the same bounds for the 2-norm and the \( \infty \)-norm of independent Gaussians as after \eqref{eq:b}, noting that \( \mathcal{H}(\varepsilon) \) is again an isotropic Gaussian variable, so \( \mathcal{H}(\varepsilon)_0 \) is a one-dimensional projection of a Gaussian and \( \sup_{i = 1, \dots, n-1} | \mathcal{H}(\varepsilon)_i | \) the maximum of \( n - 1 \) independent Gaussians.
	The proof then continues as in Section \ref{proof:main}.
\end{proof}

\end{document}